\documentclass{amsart}

\usepackage[left=3cm,right=3cm,
    top=3cm,bottom=3cm,bindingoffset=0cm]{geometry}
\usepackage{graphicx}
\usepackage{hyperref}
\hypersetup{colorlinks,allcolors=black}

\usepackage{amsmath}
\usepackage{amsfonts}
\usepackage{amssymb}
\usepackage{amsthm}
\usepackage{mathtools}
\usepackage{tikz-cd}
\usepackage{fdsymbol}
\usepackage{dynkin-diagrams}

\usepackage{multicol}
\usepackage{makecell}

\usepackage{biblatex}
\addbibresource{references.bib}

\newcommand\Tstrut{\rule{0pt}{2.6ex}}
\newcommand\Bstrut{\rule[-0.9ex]{0pt}{0pt}}

\newcommand{\ie}{i.\,e.}
\newcommand{\eg}{e.\,g.}
\newcommand{\cf}{cf.\,}

\newcommand{\Ad}{\operatorname{Ad}}
\newcommand{\ad}{\operatorname{ad}}
\newcommand{\Aut}{\operatorname{Aut}}
\renewcommand{\b}{\mathfrak{b}}
\newcommand{\B}{\mathcal{B}}
\newcommand{\C}{\mathbb{C}}
\newcommand{\Conj}{\operatorname{Conj}}
\newcommand{\Kir}{\mathcal{C}}

\newcommand{\D}{\mathcal{D}}
\newcommand{\e}{\mathfrak{e}}
\newcommand{\End}{\operatorname{End}}
\newcommand{\ev}{\operatorname{ev}}
\newcommand{\g}{\mathfrak{g}}
\newcommand{\GL}{\mathrm{GL}}
\newcommand{\gl}{\mathfrak{gl}}
\newcommand{\Gr}{\operatorname{Gr}}
\newcommand{\h}{\mathfrak{h}}

\newcommand{\Hom}{\operatorname{Hom}}

\newcommand{\id}{\operatorname{id}}
\newcommand{\Int}{\operatorname{Int}}
\newcommand{\N}{\mathbb{N}}
\newcommand{\kay}{\mathfrak{k}}
\newcommand{\el}{\mathfrak{l}}
\newcommand{\m}{\mathfrak{m}}
\renewcommand{\O}{\mathrm{O}}
\newcommand{\Out}{\operatorname{Out}}
\newcommand{\p}{\mathfrak{p}}
\newcommand{\PGL}{\mathrm{PGL}}
\newcommand{\pt}{\text{pt}}
\newcommand{\R}{\mathbb{R}}

\newcommand{\res}{\operatorname{res}}
\newcommand{\s}{\mathfrak{s}}

\newcommand{\SL}{\mathrm{SL}}
\renewcommand{\sl}{\mathfrak{sl}}
\newcommand{\so}{\mathfrak{so}}
\renewcommand{\sp}{\mathfrak{sp}}
\newcommand{\Spec}{\operatorname{Spec}}
\newcommand{\su}{\mathfrak{su}}
\newcommand{\te}{\mathfrak{t}}
\newcommand{\tr}{\operatorname{tr}}

\newcommand{\wt}{\operatorname{wt}}
\newcommand{\Z}{\mathbb{Z}}
\newcommand{\z}{\mathfrak{z}}
\newcommand{\inv}{^{-1}}

\theoremstyle{definition}
\newtheorem{defn}{Definition}[section]

\theoremstyle{plain}
\newtheorem{thm}[defn]{Theorem}
\newtheorem{prop}[defn]{Proposition}
\newtheorem{propdef}[defn]{Proposition and Definition}
\newtheorem{lem}[defn]{Lemma}
\newtheorem{cor}[defn]{Corollary}

\theoremstyle{remark}
\newtheorem{rmk}[defn]{Remark}
\newtheorem{expl}[defn]{Example}

\DeclareRobustCommand{\SkipTocEntry}[5]{}

\title{On involutions of minuscule Kirillov algebras induced by real structures}
\author{Mischa Elkner}
\date{}

\begin{document}
\begin{abstract}
    We study Kirillov algebras attached to minuscule highest weight representations of semisimple Lie algebras. They can be viewed as equivariant cohomology algebras of partial flag varieties. Real structures on the varieties then induce involutions of these algebras. We describe how these involutions act on the spectra of minuscule Kirillov algebras, and model the fixed points via the equivariant cohomology of real partial flag varieties. We then use this model to characterise freeness of the fixed point coordinate ring over the appropriate base. As an application, we recover a $q=-1$ phenomenon of Stembridge in the minuscule case by geometric means.
\end{abstract}

\maketitle

\tableofcontents

\section{Introduction}
The finite-dimensional irreducible representations of a complex semisimple Lie algebra $\g$ are central to Lie theory. For such a representation $V^\lambda$, labelled by its highest weight $\lambda$,\footnote{With respect to some Borel and Cartan subalgebra. When this choice does not matter, we do not specify it, and simply speak of weights, roots, Weyl group etc.\,of $\g$.} the \emph{Kirillov algebra}
$$
    \Kir^\lambda(\g)
    \coloneqq
    (S(\g) \otimes \End(V^\lambda))^\g
    \vspace{3pt}
$$
was introduced in \cite{Kirillov2001_introduction}.\footnote{In \cite{Kirillov2001_introduction}, Kirillov algebras are called \emph{(classical) family algebras}.} Here, $S(\g) = \bigoplus_{n\ge 0}S^n(\g)$ is the symmetric algebra on the vector space $\g$, and $\End(V^\lambda)$ the algebra of linear endomorphisms of $V^\lambda$. Both have canonical $\g$-actions, and $\Kir^\lambda(\g)$ consists of the $\g$-invariants in their tensor product.

The Kirillov algebra is commutative if and only if the representation $V^\lambda$ is weight multiplicity free \cite[Cor.\,1]{Kirillov2001_introduction}. This is in particular the case when $\lambda$ is a minuscule weight, that is, one for which all weights of $V^\lambda$ lie in a single Weyl group orbit. Panyushev \cite{Panyushev2004_endoj} has observed that such minuscule Kirillov algebras admit geometric descriptions: they are isomorphic to equivariant cohomology rings of partial flag varieties for the connected simply connected Lie group $G$ with Lie algebra $\g$. Here we follow Hausel \cite{Hausel2024_avatars} in using a slightly different model, in terms of the Langlands dual group $G^\vee$ of $G$.\footnote{For our main purpose of studying minuscule Kirillov algebras, one could avoid Langlands duality and work only in terms of $G$. However, using $G^\vee$ is more natural in the context of general Kirillov algebras, see Theorem \ref{thm:geom_model} below.} Namely, the weight $\lambda$ defines a parabolic subgroup $P_\lambda$ of $G^\vee$, and we have a ring isomorphism
\begin{equation}
\label{eq:kir_alg_geom_model}
    \Kir^\lambda(\g) \cong H^{2*}_{G^\vee}(G^\vee/P_\lambda, \C).
\end{equation}

Let us mention two useful properties of this isomorphism. Firstly, the Kirillov algebra inherits a grading from $S(\g)$, and \eqref{eq:kir_alg_geom_model} becomes a graded isomorphism if the right-hand side is graded by half the cohomological degree. (Note that $G^\vee/P_\lambda$ has no odd cohomology, so this is merely a regrading of the full cohomology ring.) Secondly, $\Kir^\lambda(\g)$ is a graded algebra over the invariant ring $S(\g)^\g$, which embeds into $\Kir^\lambda(\g)$ as $f\mapsto f\otimes \id$. The isomorphism identifies this subring with $H^{2*}_{G^\vee}\coloneqq H^{2*}_{G^\vee}(\pt, \C)$, the equivariant cohomology of a point. Altogether, the geometric model is summarised by the following commutative diagram:
\begin{equation}
\label{eq:geom_model_diag}
\begin{tikzcd}
    \Kir^\lambda(\g)
        \arrow[r, "\cong"]
    & H^{2*}_{G^\vee}(G^\vee/P_\lambda,\C) \\
    S(\g)^\g
        \arrow[u, hook]
        \arrow[r, "\cong"]
    & H^{2*}_{G^\vee}.
        \arrow[u, hook]
\end{tikzcd}
\end{equation}

The goal of this paper is to describe automorphisms of minuscule Kirillov algebras induced by automorphisms of the $G^\vee$-space $G^\vee/P_\lambda$ through \eqref{eq:geom_model_diag}. More precisely, we focus on involutions arising in this way from real structures of $G^\vee$. Such a real structure (\ie\ an antiholomorphic automorphism) $\sigma$ of $G^\vee$ acts naturally on the coweights, and induces a real structure on $G^\vee/P_\lambda$ if $\sigma_*\lambda$ is in the same $G^\vee$-orbit as $\lambda$. Taking equivariant cohomology then yields an involution $\sigma^*$ of $H^{2*}_{G^\vee}(G^\vee/P_\lambda,\C) \cong \Kir^\lambda(\g)$ mapping the subring $H^{2*}_{G^\vee} \cong S(\g)^\g$ to itself. We are interested in the action of $\sigma^*$ on spectra. In particular, we want to describe the fixed point schemes, given by
\begin{equation}
\label{eq:fpschemes}
\begin{tikzcd}
    \Spec (\Kir^\lambda(\g)_{\sigma^*})
        \arrow[r, "\cong"]
        \arrow[d]
    &(\Spec \Kir^\lambda(\g))^{\sigma^*}
        \arrow[d]
        \arrow[r, hook]
    &\Spec \Kir^\lambda(\g)
        \arrow[d]
    \\
    \Spec (S(\g)^\g_{\sigma^*})
        \arrow[r, "\cong"]
    &(\Spec S(\g)^\g)^{\sigma^*}
        \arrow[r, hook]
    &\Spec S(\g)^\g.
\end{tikzcd}
\end{equation}
Here, the superscript $\sigma^*$ denotes sets of fixed points, which form closed affine subschemes whose coordinate rings $\Kir^\lambda(\g)_{\sigma^*}$ and $S(\g)^\g_{\sigma^*}$ we call \emph{coinvariant rings}. Dually to the left vertical map, we have a homomorphism 
\begin{equation}
\label{eq:coinv_alg_hom}
    S(\g)^\g_{\sigma^*}
    \to
    \Kir^\lambda(\g)_{\sigma^*}.
\end{equation}

To describe the homomorphism \eqref{eq:coinv_alg_hom} -- and with it, the fixed points of $\sigma^*$ -- a geometric model similar to \eqref{eq:geom_model_diag} is desirable. A simple candidate for such a model is to use the real form of $G^\vee/P_\lambda$ defined by $\sigma$. However, $\sigma^*$ depends only on the \emph{inner class} of $\sigma$, which essentially is to say that $\sigma^*$ is unaffected if we compose $\sigma$ with an inner automorphism of $\g$. This is not the case for the real form of $G^\vee/P_\lambda$, so one has to identify a suitable representative in a given inner class. It turns out that, roughly speaking, $\sigma$ should be chosen as compact as possible while preserving the parabolic $P_\lambda$. When made precise, being ``as compact as possible'' means that $\sigma$ should induce a \emph{quasi-compact} real structure (see Definition \ref{def:qcrs}) on a Levi subgroup of $P_\lambda$. We show that such a choice of representative $\sigma$ exists (see Section \ref{sec:main_proof}) and use this to obtain the desired model. Our main result is as follows:
\begin{thm}
\label{thm:main}
    Let $\g$ be a complex semisimple Lie algebra, $G$ the connected simply connected Lie group with Lie algebra $\g$, and $G^\vee$ the Langlands dual group of $G$. Let $\lambda$ be a minuscule weight of $G$, viewed as a cocharacter of $G^\vee$. Then any inner class $\mathfrak{S}$ of real structures of $G^\vee$ which fix the $G^\vee$-orbit of $\lambda$ contains a real structure $\sigma$ such that $\sigma_* \lambda = \lambda$ and
    \begin{equation}
    \label{eq:main_1}
        \Kir^\lambda(\g)_{\sigma^*}
        \cong
        H^{2*}_{(G^\vee)^\sigma}((G^\vee)^\sigma/P_\lambda^\sigma, \C)
    \end{equation}
    for the parabolic subgroup $P_\lambda\le G^\vee$ defined by $\lambda$. Moreover, the same inner class contains a quasi-compact real structure $\sigma_0$ with
    \begin{equation}
    \label{eq:main_2}
        S(\g)^\g_{\sigma^*}
        \cong
        H^{2*}_{(G^\vee)^{\sigma_0}},
    \end{equation}
    and there is a canonical injection
    \begin{equation}
    \label{eq:main_3}
        \varphi\colon
        H^{2*}_{(G^\vee)^{\sigma_0}}
        \hookrightarrow
        H^{2*}_{(G^\vee)^{\sigma}}.
    \end{equation}
    Combining \eqref{eq:main_1}-\eqref{eq:main_3} yields a commutative diagram
    \begin{equation}
    \label{eq:main_diag}
    \begin{tikzcd}
        \Kir^\lambda(\g)_{\sigma^*}
            \arrow[rr, "\cong"]
        &&
        H^{2*}_{(G^\vee)^\sigma}((G^\vee)^\sigma/P_\lambda^\sigma, \C)
        \\
        S(\g)^\g_{\sigma^*}
            \arrow[u]
            \arrow[r, "\cong"]
        & H^{2*}_{(G^\vee)^{\sigma_0}}
            \arrow[r, hook, "\varphi"]
        & H^{2*}_{(G^\vee)^\sigma}
            \arrow[u]
    \end{tikzcd}    
    \end{equation}
    in which the right vertical arrow is the structure map for equivariant cohomology.
\end{thm}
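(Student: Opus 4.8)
\emph{Proof strategy.} The plan is to move everything into the geometric model \eqref{eq:kir_alg_geom_model}–\eqref{eq:geom_model_diag} and then reduce to a comparison of invariant rings at the level of Levi subgroups. First, granting the construction carried out in \S\ref{sec:main_proof} of a representative $\sigma\in\mathfrak S$ with $\sigma_*\lambda=\lambda$ whose restriction to a Levi $L_\lambda$ of $P_\lambda$ is quasi-compact (and, separately, of a quasi-compact representative $\sigma_0\in\mathfrak S$), I note that $\sigma_*\lambda=\lambda$ forces $\sigma(P_\lambda)=P_\lambda$, so $\sigma$ descends to an involution of the $G^\vee$-variety $G^\vee/P_\lambda$, and under \eqref{eq:kir_alg_geom_model} the induced self-map of cohomology is precisely $\sigma^*$. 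The fixed-point scheme of $\sigma^*$ on $\Spec\Kir^\lambda(\g)$ therefore has coordinate ring the coinvariant ring $H^{2*}_{G^\vee}(G^\vee/P_\lambda,\C)/(a-\sigma^*a)$, and the task for \eqref{eq:main_1} is to identify this with $H^{2*}_{(G^\vee)^\sigma}((G^\vee)^\sigma/P_\lambda^\sigma,\C)$.

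The key reduction uses the identification $H^*_G(G/P,\C)\cong H^*_P(\mathrm{pt},\C)\cong H^*_L(\mathrm{pt},\C)$, applied $\sigma$-equivariantly both to $(G^\vee,P_\lambda)$ and to the real pair $((G^\vee)^\sigma,P_\lambda^\sigma)$; this requires first checking that $(G^\vee/P_\lambda)^\sigma=(G^\vee)^\sigma/P_\lambda^\sigma$ as an open-and-closed subvariety, a Galois-cohomology computation for the parabolic that the quasi-compactness on $L_\lambda$ keeps under control. The problem becomes: the $\sigma^*$-coinvariants of $H^{2*}_{L_\lambda}(\mathrm{pt},\C)=S(\te^*)^{W_\lambda}$ coincide with $H^{2*}_{L_\lambda^\sigma}(\mathrm{pt},\C)=H^*(BK_{L_\lambda},\C)$, where $K_{L_\lambda}$ is a maximal compact subgroup of $L_\lambda^\sigma$ and $\te$ is a $\sigma$-stable Cartan subalgebra. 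To prove this I make $\sigma^*$ on $S(\te^*)$ explicit: because $\sigma$ is antiholomorphic it is \emph{not} the naive action of $\sigma$ on characters but that action twisted by the sign coming from orientation reversal on $BT$, yielding a $\C$-linear involution $\tau$ of $\te^*$ that normalises $W_\lambda$. Splitting $\te^*$ into $\tau$-eigenspaces identifies the coinvariant ring with a ring of polynomial invariants, which I then match with $H^*(BK_{L_\lambda},\C)=S((\mathfrak t_{K_{L_\lambda}})^*\otimes\C)^{W(K_{L_\lambda})}$ by verifying that a maximal torus of $K_{L_\lambda}$ realises the $\tau$-fixed subspace and that its Weyl group is the expected reflection subgroup — and this last step is exactly where quasi-compactness of $\sigma|_{L_\lambda}$ enters. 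Reassembling over $G^\vee/P_\lambda$ yields \eqref{eq:main_1}.

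For \eqref{eq:main_2} I run the same computation with $P_\lambda$ replaced by $G^\vee$ and $\sigma$ replaced by the quasi-compact $\sigma_0$, getting $H^{2*}_{G^\vee}(\mathrm{pt},\C)_{\sigma_0^*}\cong H^{2*}_{(G^\vee)^{\sigma_0}}(\mathrm{pt},\C)$; since $\sigma^*$ restricted to the subring $S(\g)^\g=H^{2*}_{G^\vee}$ depends only on the inner class $\mathfrak S$, its coinvariants agree with those of $\sigma_0^*$, which gives \eqref{eq:main_2}. The injection $\varphi$ of \eqref{eq:main_3} I construct directly: arranging $\sigma$ and $\sigma_0$ to commute and to share a maximal compact torus, $\varphi$ is the induced restriction map between the two equivariant cohomology rings of a point, injective because both embed compatibly into the polynomial ring on that common torus. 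Commutativity of \eqref{eq:main_diag} then reduces, via naturality of forgetful and structure maps in equivariant cohomology, to checking it after restriction to that torus, where all maps are the evident inclusions of invariant subrings; here one also uses that the structure map $H^{2*}_{(G^\vee)^\sigma}\to H^{2*}_{(G^\vee)^\sigma}((G^\vee)^\sigma/P_\lambda^\sigma,\C)$ is injective, again a rank statement supplied by quasi-compactness.

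The main obstacle is the Levi-level comparison — that taking $\sigma^*$-coinvariants of $S(\te^*)^{W_\lambda}$ reproduces the equivariant cohomology of the real partial flag variety. This is precisely where the choice of representative $\sigma$ matters: a representative that is too split makes the coinvariant ring too small (the Weyl group of $K_{L_\lambda}$ comes out wrong), while a representative that does not preserve $P_\lambda$ gives no flag-variety model at all, which is why quasi-compactness on $L_\lambda$ is built into the statement. The technical points requiring care are the Galois-cohomology identification $(G^\vee/P_\lambda)^\sigma=(G^\vee)^\sigma/P_\lambda^\sigma$, the possible disconnectedness of $K_{L_\lambda}$ and of $(G^\vee)^\sigma$ (so that $H^*(BK_{L_\lambda},\C)$ must be computed as invariants of the component group as well), and tracking the antiholomorphic sign twist consistently through the reduction.
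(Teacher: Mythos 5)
Your overall strategy matches the paper's: pass to $H^*_{G^\vee}(G^\vee/P_\lambda)\cong H^*_{L_\lambda}$, identify $\sigma^*$ with the Cartan involution acting on invariant polynomials, and use quasi-compactness of $\sigma|_{L_\lambda}$ to identify the coinvariant ring with the invariant ring of the real Levi. But there is a genuine gap at the very start: you ``grant'' a representative $\sigma\in\mathfrak S$ with $\sigma_*\lambda=\lambda$ and quasi-compact restriction to $L_\lambda$, and your whole argument for \eqref{eq:main_1} hinges on that quasi-compactness. Such a representative does \emph{not} exist in general: for $\g\cong\sl_{2n+1}$ (so $G^\vee=\PGL_{2n+1}(\C)$, split inner class, the only inner class fixing minuscule coweights) the relevant Levi has a factor of even type $A$, on which every $\sigma$ in the class restricts to a split, non-quasi-compact structure (the obstruction in Lemma \ref{lem:qcrf_extension} is precisely the odd Coxeter number of $A_{2n}$). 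In that case the theorem still holds, but not for your reason: the paper verifies \eqref{eq:main_1} by a separate explicit computation (case (b) of the proof, in the style of Example \ref{expl:typeAexpl}), where the coinvariant ring of $(-1)^{\deg}$ happens to coincide with $H^*_{L_k^\sigma}$ because the component group of $\O(k)$ supplies the missing sign changes. Your heuristic that a ``too split'' representative makes the coinvariant ring too small is exactly what fails to be fatal here, and your proposal has no mechanism for this case. Moreover, the existence statement you assume is itself the main constructive content of the theorem (Lemma \ref{lem:qcrf_extension}, proved via the Chevalley involution, an $\sl_2$-triple argument and the parity of the Coxeter number, after reducing to simple factors via Lemmas \ref{lem:kirillov_decomp} and \ref{lem:realstr_decomp}); assuming it leaves the heart of the proof unaddressed.

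Two further steps are asserted rather than proved. First, the Levi-level identification ``coinvariants of $S(\te^*)^{W_\lambda}$ equal invariants for the fixed torus and the Weyl group of $K_{L_\lambda}$'' is not a formal consequence of splitting $\te^*$ into eigenspaces: it is the content of Lemma \ref{lem:dist_coinv} (via Springer's theory of regular elements or Kostant sections) together with Corollary \ref{cor:disconn_inv_ring_nice} to handle disconnectedness of $L_\lambda^\theta$; you flag these issues but do not supply the argument. Second, your construction of $\varphi$ in \eqref{eq:main_3} as ``the induced restriction map'' is not well defined as stated: there is no group homomorphism between $(G^\vee)^{\sigma}$ and $(G^\vee)^{\sigma_0}$, and realising both rings inside the polynomial ring of a common (compact part of a) Cartan only produces a map once one knows the containment of invariant rings, i.e.\ the Weyl group comparison $W(\theta)\le W(\theta_0)$ of Lemma \ref{lem:comparison_map} (plus the component-group argument there). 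That comparison, which encodes that the quasi-compact form is extremal in its inner class, is the actual content and is missing from your proposal.
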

For a discussion of uniqueness of the real structure $\sigma$ in Theorem \ref{thm:main} see Appendix \ref{sec:uniqueness_and_tables}. 

Let us briefly sketch our approach and the role of quasi-compactness. Already on the level of the base ring $H^{2*}_{G^\vee}$, we can compare its coinvariant ring $(H^{2*}_{G^\vee})_{\sigma^*}$ to $H^{2*}_{(G^\vee)^\sigma} = H^{2*}_{(G^\vee)^\sigma}(\pt,\C)$. Chern--Weil theory (see Section \ref{sec:background}) identifies $H^{2*}_{G^\vee}$ with the invariant polynomial ring $\C[\g^\vee]^{\g^\vee}$ (where $\g^\vee$ is the Lie algebra of $G^\vee$). To similarly describe $H^{2*}_{(G^\vee)^\sigma}$, one associates to $\sigma$ its \emph{Cartan involution}, a complex linear Lie algebra involution $\theta$ of $\g^\vee$. Then $H^{2*}_{(G^\vee)^\sigma} \cong \C[(\g^\vee)^\theta]^{(\g^\vee)^\theta}$, and to ensure that this is the coinvariant ring of $\C[\g^\vee]^{\g^\vee}$ as desired, $\theta$ has to be particularly well-behaved. This translates precisely to the requirement that $\sigma$ be quasi-compact (see Section \ref{sec:invrings_and_qcrs}).

On the level of Kirillov algebras, a similar translation to invariant rings is possible. Here it turns out that $\sigma$ has to fix $\lambda$ (denoted $\sigma_*\lambda = \lambda$ in the theorem) and one then has to analyse the restriction of $\sigma$ to $L_\lambda$, the Levi factor of the parabolic $P_\lambda$. There is a further complication arising from the fact that $L_\lambda^\sigma$ need not be connected. Up to this issue, which we address, one arrives at the requirement that $\sigma|_{L_\lambda}$ be quasi-compact, analogously to the level of base rings. The main work in proving Theorem \ref{thm:main} is then to show that these requirements are fulfilled by some real structure $\sigma$ (with one exception, related to the issue of connectedness). Let us note that this $\sigma$ need not be quasi-compact on all of $G^\vee$, which leads to the auxiliary real structure $\sigma_0$ in the theorem, used on the level of base rings.

In addition to the well-behaved Cartan involutions there are also topological advantages to quasi-compactness. Indeed, if $\g$ is simple and not isomorphic to $\sl_{2n+1}$, then the real partial flag varieties $(G^\vee)^\sigma/P_\lambda^\sigma$ arising in Theorem \ref{thm:main} are \emph{equal rank homogeneous spaces} (see Section \ref{sec:ff}). Departing from this observation, we use Theorem \ref{thm:main} to describe a key algebraic property of the coinvariant homomorphism \eqref{eq:coinv_alg_hom}:
\begin{thm}
\label{thm:degeneracy}
    In the setting of Theorem \ref{thm:main}, either $\Kir^\lambda(\g)_{\sigma^*}$ is a free module of finite rank over $S(\g)^\g_{\sigma^*}$ or the homomorphism \eqref{eq:coinv_alg_hom} is non-injective. The former case holds precisely when we can choose $\sigma_0 = \sigma$ in Theorem \ref{thm:main}, which is equivalent to $\lambda$ being fixed by a quasi-compact real structure in the given inner class $\mathfrak{S}$. 
\end{thm}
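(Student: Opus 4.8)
The plan is to use the commutative diagram \eqref{eq:main_diag} to reduce the freeness question to a statement about equivariant cohomology of fixed-point subgroups, and then analyze the map $\varphi$. First I would recall that for a compact Lie group $K$ acting on a point, $H^{2*}_K$ is a polynomial ring (on generators in the degrees given by the exponents/invariant degrees of a maximal torus of $K$ modulo the Weyl group action), and more generally for $(G^\vee)^\sigma$ — which need not be connected or compact — one identifies $H^{2*}_{(G^\vee)^\sigma}$ with the invariants $S(\te^\sigma)^{W^\sigma}$ for the appropriate real Cartan $\te^\sigma$ and component-group-corrected Weyl group $W^\sigma$. By the right-hand isomorphism in \eqref{eq:main_diag}, $\Kir^\lambda(\g)_{\sigma^*} \cong H^{2*}_{(G^\vee)^\sigma}((G^\vee)^\sigma/P_\lambda^\sigma)$ is via the structure map a module over $H^{2*}_{(G^\vee)^\sigma}$, and this module is always \emph{free} of finite rank — this is the standard fact that equivariant cohomology of a (generalized) flag variety of a compact group is free over $H^*$ of a point, with a basis indexed by a set of Schubert classes (equivalently, by the coset space $W^\sigma/W^\sigma_{P_\lambda^\sigma}$). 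So the only question is whether the coinvariant homomorphism \eqref{eq:coinv_alg_hom}, which under the diagram is the composite $S(\g)^\g_{\sigma^*} \xrightarrow{\cong} H^{2*}_{(G^\vee)^{\sigma_0}} \xrightarrow{\varphi} H^{2*}_{(G^\vee)^\sigma} \hookrightarrow \Kir^\lambda(\g)_{\sigma^*}$, is injective, and whether freeness is preserved along it.

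The key dichotomy then comes from examining $\varphi\colon H^{2*}_{(G^\vee)^{\sigma_0}} \hookrightarrow H^{2*}_{(G^\vee)^\sigma}$. By construction $\sigma$ and $\sigma_0$ lie in the same inner class, so they have the same maximal compact-type information up to conjugacy; the map $\varphi$ should be described (in \S\ref{sec:main_proof}, which I may invoke) as the inclusion induced by a map of fixed-point groups, hence at the level of invariant polynomial rings it is $S(\te^{\sigma_0})^{W^{\sigma_0}} \hookrightarrow S(\te^{\sigma})^{W^\sigma}$ coming from an inclusion $\te^\sigma \hookrightarrow \te^{\sigma_0}$ of real Cartans (a quasi-compact $\sigma_0$ has the largest possible split-reduced Cartan in a suitable sense, so $\te^\sigma$ is a subspace). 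The crucial observation is that this is an inclusion of \emph{polynomial rings}, and such a map is flat (equivalently, makes the target free) if and only if it is \emph{finite}, which happens exactly when $\dim \te^\sigma = \dim \te^{\sigma_0}$, i.e.\ exactly when $\sigma$ and $\sigma_0$ can be taken equal. If the dimensions differ, then $\varphi$ is a polynomial extension of positive transcendence degree: it is injective as a ring map but $H^{2*}_{(G^\vee)^\sigma}$ is not finitely generated, let alone free, as a module over the image, and moreover — this is the point that needs the geometric model — the further composite into $\Kir^\lambda(\g)_{\sigma^*}$ fails to be injective. Indeed $\Kir^\lambda(\g)_{\sigma^*}$ is \emph{finite} over $H^{2*}_{(G^\vee)^\sigma}$ (being free of finite rank), hence finite over the image of $\varphi$ only if $\varphi$ itself is finite; since a finitely generated algebra that is a finite module over a subring of strictly smaller Krull dimension must have a nonzero kernel for the inclusion of that subring — more precisely, the composite $S(\g)^\g_{\sigma^*} \to \Kir^\lambda(\g)_{\sigma^*}$ has image of Krull dimension $= \dim\Kir^\lambda(\g)_{\sigma^*} = \dim\te^\sigma < \dim\te^{\sigma_0} = \dim S(\g)^\g_{\sigma^*}$, forcing a kernel.

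To organize the argument I would proceed in the following steps. (1) Reduce, via \eqref{eq:main_diag}, to studying the composite $S(\g)^\g_{\sigma^*} \xrightarrow{\cong} H^{2*}_{(G^\vee)^{\sigma_0}} \xrightarrow{\varphi} H^{2*}_{(G^\vee)^\sigma} \to \Kir^\lambda(\g)_{\sigma^*}$. (2) Establish that the last map makes $\Kir^\lambda(\g)_{\sigma^*}$ free of finite rank over $H^{2*}_{(G^\vee)^\sigma}$ (Leray–Hirsch / Bruhat decomposition for the real flag variety $(G^\vee)^\sigma/P_\lambda^\sigma$; cite the analogue used for \eqref{eq:kir_alg_geom_model}). (3) Identify $\varphi$ with an inclusion of polynomial invariant rings induced by a linear inclusion of real Cartans, and compute the two Krull dimensions; show they agree iff $\sigma_0=\sigma$ can be arranged iff $\lambda$ is fixed by a quasi-compact real structure in $\mathfrak{S}$ (this last equivalence is essentially the content of the construction of $\sigma_0$ in Theorem \ref{thm:main}). (4) If the dimensions agree, $\varphi$ is a finite injection of polynomial rings of equal dimension, hence — one checks, using that the target is Cohen–Macaulay and the source is regular, or directly via a graded Nakayama / Hilbert-series count — it is finite free; composing with step (2) gives freeness of \eqref{eq:coinv_alg_hom}. (5) If the dimensions differ, the dimension count in step (3) shows the composite cannot be injective, giving the other alternative. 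The main obstacle I anticipate is step (3): pinning down precisely how $\varphi$ arises from the comparison of $\sigma$ and $\sigma_0$ and verifying that it is exactly a polynomial-ring inclusion coming from a Cartan inclusion, with the transcendence-degree gap equal to the defect between $\sigma$ and a quasi-compact representative — this is where the detailed structure theory from \S\ref{sec:main_proof} must be brought to bear, and where one must be careful about disconnectedness of $(G^\vee)^\sigma$ and the resulting pseudo-reflection groups acting on the real Cartans.
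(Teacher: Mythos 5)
The central step of your plan fails. In step (2) you assert that $H^{2*}_{(G^\vee)^\sigma}((G^\vee)^\sigma/P_\lambda^\sigma)$ is \emph{always} free of finite rank over $H^{2*}_{(G^\vee)^\sigma}$ via the structure map, with a Schubert-type basis. This is false for real partial flag varieties: writing $(G^\vee)^\sigma/P_\lambda^\sigma\cong K/L$ with $K$ a maximal compact subgroup of $(G^\vee)^\sigma$ and $L=L_\lambda^\sigma\cap K$, the identification \eqref{eq:eq_coh_of_hmg_sp} gives $H^*_K(K/L)\cong H^*_L$, whose transcendence degree is $\operatorname{rank}L$; freeness over $H^*_K$ (equivariant formality) holds if and only if $\operatorname{rank}L=\operatorname{rank}K$ (Hopf--Samelson and Borel), and when the ranks differ the structure map $H^*_K\to H^*_L$ is not even injective. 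A concrete counterexample to your claim: $\g=\sl_2$ with $\sigma$ split, where the real flag variety is $\R P^1\simeq S^1$, $L$ is finite, and the structure map is $\C[u]\to \C$, killing $u$. This equal-rank dichotomy is exactly where the alternative in the theorem comes from in the paper's argument, and it requires a nontrivial comparison of the ranks of $(\g^\vee)^\theta$ and $\el_\lambda^\theta$ (showing equal rank holds precisely when $\theta$ is pinning-preserving on $\g^\vee$, i.e.\ $\sigma$ quasi-compact, with a case analysis and separate treatment of type $A_{2n}$), none of which your plan supplies.

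Relatedly, your step (3) locates the dichotomy in whether $\varphi$ is finite, claiming the quasi-compact form has a strictly larger relevant Cartan when $\sigma\neq\sigma_0$. This is also incorrect: the transcendence degrees of $H^*_{(G^\vee)^{\sigma_0}}$ and $H^*_{(G^\vee)^{\sigma}}$ coincide, because the rank of the maximal compact subgroup is an invariant of the inner class (in the paper's construction one can even arrange $\h^\sigma=\h^{\sigma_0}$), so $\varphi$ is \emph{always} a finite injection. Your closing transcendence-degree-drop argument for non-injectivity of \eqref{eq:coinv_alg_hom} is the right kind of argument but is attached to the wrong map: in the non-quasi-compact case the drop happens in the structure map $H^*_{(G^\vee)^\sigma}\to H^*_{(G^\vee)^\sigma}((G^\vee)^\sigma/P_\lambda^\sigma)$, not in $\varphi$; and in the free case one takes $\sigma=\sigma_0$, so $\varphi$ is the identity and freeness follows from equivariant formality in the equal-rank case rather than from any Bruhat-type paving of the real flag variety.
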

Structural results like Theorem \ref{thm:degeneracy} are of inherent interest, but are also related to combinatorial applications, one of which we now describe. Namely, for a generic $x\in \Spec S(\g)^\g$ in the base, the fibre in $\Spec \Kir^\lambda(\g)$ is finite and can be identified with the set $\wt (\lambda)$ of weights of $V^\lambda$. By choosing $x\in (\Spec S(\g)^\g)^{\sigma^*}$, we obtain an action of $\sigma^*$ on that fibre, hence an involution on $\wt (\lambda)$. Now if $\sigma$ is (inner to) a split real structure, it is not hard to identify this action on $\wt (\lambda)$ with that of the longest element $w_0$ of the Weyl group. This action has been studied by Stembridge \cite{Stembridge1994_minuscule} and shown to fulfil a $q=-1$ phenomenon. This involves the \emph{Dynkin polynomial} $\D^\lambda$ (see \eqref{eq:dynkinpoly} below), which in the setting of \cite{Stembridge1994_minuscule}, is the rank generating function for the ranked partially ordered set $\wt(\lambda)$. Here, we interpret $\D^\lambda$ as the Poincaré polynomial of $\Kir^\lambda(\g)$ over $S(\g)^\g$, following Panyushev \cite{Panyushev2004_endoj}. Stembridge's $q=-1$ phenomenon then says
\begin{equation}
\label{eq:q=-1}
    \#\wt(\lambda)^{w_0} = \D^\lambda(-1).
\end{equation}

Both sides of this identity have natural interpretations in our setup. Indeed, the left-hand side $\#\wt(\lambda)^{w_0}$ is the number of $\sigma^*$-fixed points in the fibre over a generic $\sigma^*$-fixed base point. Moreover, still assuming that $\sigma$ is inner to a split real structure, we will show that $\sigma^*$ 
acts on $\Kir^\lambda(\g)$ as $(-1)^{\deg}$, that is, it acts like $-1\in \C^\times$ through the $\C^\times$-action corresponding to the natural grading on $\Kir^\lambda(\g)$. The same is then true for the restriction of $\sigma^*$ to the functions on a fibre as above. Moreover, that function ring, denoted $\Kir^\lambda_x(\g)$, inherits the Poincaré polynomial $\D^\lambda$ from $\Kir^\lambda(\g)$ -- now as a graded $\C$-vector space. Thus, $\D^\lambda(-1)$ is simply the trace of $\sigma^*$ on $\Kir^\lambda_x(\g)$. Altogether, we have the following Theorem, which recovers \eqref{eq:q=-1}: 
\begin{thm}
\label{thm:fixed_point_count}
    Let $\g$ be a complex semisimple Lie algebra and $\lambda$ a minuscule weight of $\g$. Further, let $G$ be the connected simply connected Lie group with Lie algebra $\g$, $G^\vee$ its Langlands dual, and $\sigma$ a split real structure of $G^\vee$. Denoting the canonical map $\Spec \Kir^\lambda(\g)\to \Spec S(\g)^\g$ by $\pi$, the fixed-point scheme $(\Spec S(\g)^\g)^{\sigma^*}$ contains a dense open subset $U$ such that the fibre $\pi\inv(x)$ is reduced for each $x\in U$. For such $x$, we then have
    $$
        \D^\lambda(-1)
        =
        \#(\pi\inv(x))^{\sigma^*}
        =
        \#\wt(\lambda)^{w_0},
    $$
    where $w_0$ is the longest element of the Weyl group. Moreover, this quantity is nonzero if and only if $\lambda$ is fixed by a quasi-compact real structure inner to $\sigma$.
\end{thm}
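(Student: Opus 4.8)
The plan is to prove all three assertions through the geometric model \eqref{eq:kir_alg_geom_model}. Using $H^*_{G^\vee}(G^\vee/P_\lambda)=H^*_{P_\lambda}(\pt)\cong\C[\te^\vee]^{W_{P_\lambda}}$ (with $W_{P_\lambda}$ the Weyl group of a Levi of $P_\lambda$), I would identify $\Spec\Kir^\lambda(\g)$ with $\te^\vee/W_{P_\lambda}$ and $\Spec S(\g)^\g$ with $\te^\vee/W$, so that $\pi$ becomes the natural quotient map; it is finite and flat, and finite étale over the complement of the discriminant, i.e.\ over the locus of images of regular elements of $\te^\vee$. Since $\lambda$ is minuscule, $W_{P_\lambda}=\operatorname{Stab}_W(\lambda)$, so $W/W_{P_\lambda}\cong W\lambda=\wt(\lambda)$ and the generic fibre has size $\#\wt(\lambda)=\D^\lambda(1)$; moreover $\Kir^\lambda(\g)$ is graded free over $S(\g)^\g$ with Poincaré polynomial $\D^\lambda$ by \cite{Panyushev2004_endoj}. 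Throughout I use the fact, recorded in the discussion preceding the theorem, that a split $\sigma$ makes $\sigma^*$ act on $\Kir^\lambda(\g)$ — hence on the subring $S(\g)^\g$ — as $(-1)^{\deg}$; consequently $\sigma^*$ induces on both $\te^\vee/W_{P_\lambda}$ and $\te^\vee/W$ the automorphism descending from $s\mapsto -s$ on $\te^\vee$.

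First I would locate $U$. Writing $S(\g)^\g=\C[p_1,\dots,p_r]$ with $\deg p_i=d_i$, the relation $\sigma^*=(-1)^{\deg}$ shows that the fixed-point scheme $(\Spec S(\g)^\g)^{\sigma^*}$ is the coordinate subspace cut out by the $p_i$ with $d_i$ odd; in particular it is irreducible. I claim it meets the regular semisimple locus: the involution $-w_0$ of $\te^\vee$ has a regular fixed vector, because a root vanishing identically on $\operatorname{Fix}(-w_0)$ would have to be $w_0$-invariant, which is impossible since $w_0$ sends $\Delta^+$ to $\Delta^-$. Taking $U:=(\Spec S(\g)^\g)^{\sigma^*}\cap\{\text{regular semisimple locus}\}$ then yields a dense open subset over which $\pi$ is finite étale, so that $\pi\inv(x)$ is a disjoint union of $\#\wt(\lambda)$ reduced points for every $x\in U$.

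Next comes the count, done via a trace. Fix any $\sigma^*$-fixed $x$ and a homogeneous $S(\g)^\g$-basis $e_1,\dots,e_N$ of $\Kir^\lambda(\g)$, so that $\sum_i q^{\deg e_i}=\D^\lambda(q)$. Since $\sigma^*$ fixes $x$ it preserves $\m_x\subset S(\g)^\g$, hence descends to the fibre $\Kir^\lambda_x(\g)$, where it acts on the $\C$-basis $\bar e_i$ by $(-1)^{\deg e_i}$; therefore the $\C$-trace of $\sigma^*$ on $\Kir^\lambda_x(\g)$ equals $\sum_i(-1)^{\deg e_i}=\D^\lambda(-1)$. For $x\in U$ the algebra $\Kir^\lambda_x(\g)$ is reduced, a product of $N$ copies of $\C$, and $\sigma^*$ permutes its primitive idempotents, so that trace is exactly $\#(\pi\inv(x))^{\sigma^*}$. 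To see this number is $\#\wt(\lambda)^{w_0}$, I would take a regular $t\in\operatorname{Fix}(-w_0)$ as above: then $\pi\inv(\bar t)$ is the set of $W_{P_\lambda}$-orbits in $Wt$, identified with $W_{P_\lambda}\backslash W$, and since $\sigma^*$ sends $wt$ to $-wt=ww_0t$ (using $w_0t=-t$), it acts as right translation by $w_0$; transporting along $w\mapsto w\inv$ to $W/W_{P_\lambda}\cong\wt(\lambda)$ turns this into the usual $w_0$-action on weights, so $\#(\pi\inv(x))^{\sigma^*}=\#\wt(\lambda)^{w_0}$ for all $x\in U$.

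Finally I would deduce the nonvanishing criterion from Theorem \ref{thm:degeneracy}. If \eqref{eq:coinv_alg_hom} is non-injective, the induced map $(\Spec\Kir^\lambda(\g))^{\sigma^*}\to(\Spec S(\g)^\g)^{\sigma^*}$ is not dominant, so its image omits some point $x$ of the dense open $U$; for that $x$ we get $(\pi\inv(x))^{\sigma^*}=\varnothing$, hence $\D^\lambda(-1)=\#(\pi\inv(x))^{\sigma^*}=0$. In the complementary case $\Kir^\lambda(\g)_{\sigma^*}$ is free over $S(\g)^\g_{\sigma^*}$; it is nonzero, since its spectrum contains the image of $0\in\te^\vee$ (a $\sigma^*$-fixed point), so it has rank $\ge1$, and that rank equals the generic fibre size $\D^\lambda(-1)$, forcing $\D^\lambda(-1)\ne0$. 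Since Theorem \ref{thm:degeneracy} identifies the free case with $\lambda$ being fixed by a quasi-compact real structure in the inner class of $\sigma$, this is exactly the asserted equivalence. Granting the $(-1)^{\deg}$ input, the genuine obstacle is the careful bookkeeping in the third paragraph (base point, left versus right cosets, the inversion $w\mapsto w\inv$) together with the scheme-theoretic care needed here, e.g.\ checking that fixed-point schemes of the relevant finite quotients remain reduced so that "trace $=$ number of fixed points'' is legitimate, and that the generic fibre of the coinvariant map computes the rank.
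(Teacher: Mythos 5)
Your proposal is correct and follows essentially the same strategy as the paper: take $U$ to be the intersection of the $\sigma^*$-fixed locus with the regular semisimple locus, use $\sigma^*=(-1)^{\deg}$ (Proposition \ref{prop:sigma_action_translated}) together with finite-freeness with Poincar\'e polynomial $\D^\lambda$ to compute the trace on a reduced fibre as $\D^\lambda(-1)$, identify the fibre involution with the $w_0$-action on $\wt(\lambda)$ at a regular point $t$ with $w_0t=-t$, and settle the nonvanishing criterion via Theorem \ref{thm:degeneracy}. The only differences are cosmetic: you carry out the weight identification by explicit coset bookkeeping in the Chevalley model $\te^\vee/W_\lambda\to\te^\vee/W$ rather than via the evaluation maps and principal involutions of Propositions \ref{prop:evaluation_onto} and \ref{prop:involution_to_wt_permutation}, and you close the last step with a dominance/rank argument instead of the paper's finiteness-and-closedness surjectivity argument; both implementations are sound.
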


An analogous analysis is possible for involutions induced by non-split inner classes, although the action on $\Kir^\lambda(\g)$ can be more complicated than $(-1)^{\deg}$. Moreover, we expect this work to be closely related to Hausel's \emph{big algebras} \cite{Hausel2024_avatars} (see also Section \ref{sec:background}). These are maximal commutative subalgebras of Kirillov algebras, so they coincide with their ambient Kirillov algebras in the minuscule case treated here, and we expect our results to generalise to all big algebras. In fact, a description of coinvariants similar to Theorem \ref{thm:main} in that generality has recently been obtained for so-called Dynkin automorphisms in \cite{Zveryk2024_dynkin}. Theorem \ref{thm:fixed_point_count} should in this way generalise to arbitrary dominant integral weights, thereby recovering results in \cite{Stembridge1996_canonical}. 

This paper is structured as follows. In Section \ref{sec:background} we review the relevant background on equivariant cohomology and Kirillov algebras and define the coinvariant rings used here. Section \ref{sec:real_str} reviews real structures and describes the induced actions on partial flag varieties and Kirillov algebras. This is then translated to rings of invariant polynomials, which are further discussed in Section \ref{sec:invrings_and_qcrs}. That section also contains a discussion of quasi-compactness. The proof of Theorem \ref{thm:main} is given in Section \ref{sec:main_proof}, and Theorem \ref{thm:degeneracy} is deduced from it in Section \ref{sec:ff}. In Section \ref{sec:weight_action} we describe how our involutions relate to involutions on weights and derive Theorem \ref{thm:fixed_point_count}. Finally, Section \ref{sec:outlook} briefly discusses the aforementioned generalisations to big algebras and arbitrary weights, as well as a connection to the theory of Higgs bundles. Appendix \ref{sec:uniqueness_and_tables} discusses in which sense the real structure appearing in Theorem \ref{thm:main} is unique, and lists the real structures in tables.

\addtocontents{toc}{\SkipTocEntry}
\section*{Acknowledgements}
I would like to thank Tamás Hausel for introducing me to this area of mathematics and for his constant guidance. I would also like to thank Jakub Löwit and Miguel González for fruitful discussions and many helpful comments on this paper. 

This work was done during the author's PhD studies at the Institute of Science and Technology Austria (ISTA). It was funded by the Austrian Science Fund (FWF) 10.55776/P35847.

\addtocontents{toc}{\SkipTocEntry}
\section*{Conventions}
In this paper, all cohomology is taken with complex coefficients. 

Given an action of a group $G$ on a space or algebraic structure $X$, the fixed points are denoted by $X^G$. Similarly, if $f$ is an automorphism of $X$, the fixed points under $f$ are denoted $X^f$. However, if $\g$ is a Lie algebra acting linearly on a vector space $V$, then $V^\g$ denotes the subspace of elements annihilated by $\g$ (so $V^\g =V^G$ for the corresponding action of the connected simply connected Lie group with Lie algebra $\g$). 

Although our main results are for semisimple complex Lie algebras/groups, we will also need to incorporate the reductive case, for which we use the following standard conventions. A Cartan subalgebra $\h$ of a reductive Lie algebra $\g$ is the direct sum of the centre $\z(\g)$ and a Cartan subalgebra $\h^{ss}$ of the semisimple part $\g^{ss}=[\g,\g]$. The roots and Weyl group of $\g$ with respect to $\h$ are defined in terms of $(\g^{ss}, \h^{ss})$ and extended in the obvious way. Thus, if $G$ is reductive with Lie algebra $\g$ and $H\le G$ is the Cartan subgroup corresponding to $\h$, then the Weyl group is isomorphic to $N_G(H)/H$ if $G$ is connected but can be strictly smaller otherwise. 

For semisimple $\g$, we freely use the canonical $\g$-equivariant isomorphism $\g\to \g^*$ afforded by the Killing form. In particular, we freely view the Weyl group with respect to a Cartan subalgebra $\h$ as a subgroup of $\GL(\h)$.

We freely identify (co)weights of a complex Lie group $G$ with their induced (co)weights of the Lie algebra $\g$ (and vice versa when lifts exist). As mentioned previously, we do not specify a choice of Cartan (or Borel) subalgebra when talking about weights, roots, etc., unless necessary. The \emph{weight spaces} of a representation are by convention at least one-dimensional. 

A homomorphism of algebras $A\to B$ is called \emph{free} if it equips $B$ with the structure of a free $A$-module. 

\section{Equivariant cohomology, Kirillov algebras and coinvariant rings}
\label{sec:background}
In this section, we provide further details on relevant background and context. We start with a brief review of equivariant cohomology, a cohomology theory for spaces with group actions.

Let $G$ be a topological group and $X$ a left $G$-space. Let $\mathbb{E}G \to \mathbb{B}G$ be a universal $G$-bundle -- in other words, $\mathbb{E}G$ is a contractible space with free right $G$-action, and $\mathbb{B}G = \mathbb{E}G/G$. ($\mathbb{E}G$ is not unique, but unique up to $G$-homotopy equivalence.) The \emph{(Borel) $G$-equivariant cohomology} of $X$ with coefficients in a ring $R$ is defined as
$$
    H^*_G(X, R)
    \coloneqq
    H^*(X_G, R)
$$
where
$$
    X_G \coloneqq (\mathbb{E}G\times X)/\big( (e\cdot g, x) \sim (e, g\cdot x) \big)
$$
From now on, we always take $R=\C$ and drop this from the notation. It is clear from the definition that $H^*_G(X)$ is a graded $\C$-algebra. In fact, the projection $X\to \pt$ to a point induces a map $X_G\to \pt_G$, making $H^*_G(X)$ canonically an algebra over $H^*_G\coloneqq H^*_G(\pt)$.

Moreover, equivariant cohomology is functorial for morphisms of spaces with group action. That is, let $H$ be another topological group, $Y$ a left $H$-space, $\alpha\colon G\to H$ a group homomorphism, and $f\colon X\to Y$ a map such that
\begin{equation}
\label{eq:eq_coh_functor_condition}
    f(g\cdot x) = \alpha(g)\cdot f(x)
\end{equation}
for all $x\in X$, $g\in G$. Then the pair $(\alpha,f)$ induces a morphism of algebras
\begin{equation}
\label{eq:eq_coh_functor_result}
\begin{tikzcd}
    H^*_H(Y)
        \arrow[r]
    & H^*_G(X)
    \\
    H^*_H
        \arrow[u]
        \arrow[r]
    & H^*_G
        \arrow[u]
\end{tikzcd}
\end{equation}
making $(G,X)\mapsto H^*_G(X)$ into a functor. As a special case, suppose that $Y=X$, $f=\id_X$, and that $\alpha$ is a homotopy equivalence (in addition to being a group homomorphism). One then readily concludes that the resulting homomorphism $H^*_H(X)\to H^*_G(X)$ is an isomorphism. 

Applying this last remark to the case of a point, whenever there is a group homomorphism $G\to H$ which is also a homotopy equivalence we have $H^*_G\cong H^*_H$. In particular, this holds when $H$ is a reductive Lie group and $G$ its maximal compact subgroup (see \eg\ \cite[Prop.\,7.19(a)]{KnappLG}) or when $G$ is a parabolic subgroup of a reductive Lie group and $H$ its Levi factor (\eg\ \cite[Prop.\,7.83(d)]{KnappLG}). If $G$ is a compact Lie group with Lie algebra $\g$, we can describe the ring $H^*_G$ via the Chern-Weil isomorphism \cite[p.\,116]{Chern_complex_mfds}. Namely, $H^*_G$ is concentrated in even degree, meaning $H^{2k+1}_G = 0$ for every $k\in \N$. If we now grade $H^*_G = H^{2*}_G$ by placing $H^{2k}_G$ in degree $k$, then there is a graded isomorphism
\begin{equation}
\label{eq:Chern-Weil}
    H^{2*}_G 
    \cong H^{2*}(\mathbb{B}G)
    \cong S(\g)^G\! \otimes_\R \C.
\end{equation}

Lastly, let us mention the case where $X=G/H$ is a homogeneous space, for $H\le G$ a topological subgroup. In this case, any choice of $\mathbb{E}G$ also has a free $H$-action, so it can be viewed as $\mathbb{E}H$ as well. Moreover, one readily checks that $(G/H)_G\cong \mathbb{E}G/H$, so
\begin{equation}
\label{eq:eq_coh_of_hmg_sp}
    H^*_G(G/H)
    =
    H^*((G/H)_G)
    \cong
    H^*(\mathbb{B}H)
    \cong
    H^*_H.
\end{equation}
For more details on equivariant cohomology we refer to \cite{Carlson_thesis}.

We now collect the key facts on Kirillov algebras stated already in the introduction.
\begin{defn}
    Let $\g$ be a complex semisimple Lie algebra and $\lambda$ a dominant integral weight. Let $V^\lambda$ denote the irreducible $\g$-representation of highest weight $\lambda$, $\End(V^\lambda)$ the $\C$-algebra of linear endomorphisms\footnote{not to be confused with the algebra of $\g$-equivariant endomorphisms (which is isomorphic to $\C$ by Schur's lemma)}, and $S(\g)$ the symmetric algebra of $\g$. The \emph{Kirillov algebra} with label $\lambda$ is the graded subalgebra
    $$
        \Kir^\lambda(\g)
        \coloneqq
        (S(\g)\otimes \End(V^\lambda))^\g
        \subseteq
        S(\g)\otimes \End(V^\lambda)
    $$
    consisting of fixed points of the diagonal $\g$-action.
\end{defn}

If $\h\le \g$ is a Cartan subalgebra and $W$ the Weyl group, Panyushev \cite{Panyushev2004_endoj} defines 
$$
    \C^\lambda(\h)
    \coloneqq
    (S(\h)\otimes \End_\h(V^\lambda))^W,
$$
where $\End_\h(V^\lambda)$ denotes the $\h$-equivariant endomorphisms. Identifying $\g$ with $\g^*$ and $\h$ with $\h^*$ through the Killing form yields an injective restriction map
\begin{equation}
\label{eq:Cartan_restriction_Kirillov_algebra}
    r_\lambda
    \colon
    \Kir^\lambda(\g)
    \hookrightarrow
    \Kir^\lambda(\h).
\end{equation}

\begin{prop}
\label{prop:kirillov_facts}
    Let $\g$ be a complex semisimple Lie algebra and $\lambda$ a dominant integral weight. The Kirillov algebra $\Kir^\lambda(\g)$
    \begin{itemize}
        \item[(i)] has a grading induced from $S(\g)$,
        \item[(ii)] contains $S(\g)^\g$ as the graded subring $S(\g)^\g\otimes \{\id_{V^\lambda}\}$,
        \item[(iii)] is finite-free over $S(\g)^\g$, and
        \item[(iv)] is commutative if and only if $V^\lambda$ is weight-multiplicity free, \ie\ if the weight spaces\footnote{Weight spaces are non-zero by convention.} of $V^\lambda$ are one-dimensional.
        \item[(v)] Moreover, $r_\lambda$ is an isomorphism if and only if $\lambda$ is minuscule. 
    \end{itemize}
\end{prop}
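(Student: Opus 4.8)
Parts (i) and (ii) are immediate. The diagonal $\g$-action on $S(\g)\otimes\End(V^\lambda)$ acts by degree-$0$ derivations on the first factor and by $\xi\mapsto[\rho(\xi),-]$ on the second (with $\rho$ the representation), so it preserves the grading inherited from $S(\g)$ and kills $f\otimes\id_{V^\lambda}$ precisely when $f\in S(\g)^\g$; hence $\Kir^\lambda(\g)$ is a graded subalgebra containing $S(\g)^\g\otimes\{\id_{V^\lambda}\}$ as a graded subring, giving (i) and (ii).

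For (iii) I would invoke Kostant's separation-of-variables theorem: $S(\g)\cong S(\g)^\g\otimes\mathcal H$ as graded $\g$-modules, with $\mathcal H$ the harmonic polynomials and $\g$ acting trivially on the first factor. Tensoring with $\End(V^\lambda)$ and taking $\g$-invariants yields $\Kir^\lambda(\g)\cong S(\g)^\g\otimes(\mathcal H\otimes\End(V^\lambda))^\g$ as graded $S(\g)^\g$-modules, so it suffices that $(\mathcal H\otimes\End(V^\lambda))^\g$ be finite-dimensional. This holds because $\End(V^\lambda)=\bigoplus_\nu V^\nu\otimes U_\nu$ with $U_\nu$ finite-dimensional multiplicity spaces, almost all zero, and $(\mathcal H\otimes V^\nu)^\g=\Hom_\g((V^\nu)^*,\mathcal H)$ has dimension equal to the multiplicity of $(V^\nu)^*$ in $\mathcal H$, which by Kostant is $\dim (V^\nu)_0$ (zero weight space), hence finite. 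Thus $\Kir^\lambda(\g)$ is free of finite rank over $S(\g)^\g$, the rank being $\dim(\mathcal H\otimes\End V^\lambda)^\g=\sum_\nu[\End V^\lambda:V^\nu]\dim(V^\nu)_0=\dim(\End V^\lambda)_0=\sum_{\mu\in\wt(\lambda)}(\dim V^\lambda_\mu)^2$. Equivalently this is the dimension of the generic fibre of $\Spec\Kir^\lambda(\g)\to\g/\!/G$: base-changing to the fraction field of $S(\g)^\g$, a regular semisimple fibre is a closed orbit $G/T$ with $T$ a maximal torus, so the generic stalk is isomorphic (by evaluation at the base point $eT$ together with Peter--Weyl) to $\End_\h(V^\lambda)$.

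Part (iv). For ``$\Leftarrow$'': if $V^\lambda$ is weight-multiplicity free then $\End_\h(V^\lambda)=\bigoplus_\mu\End(V^\lambda_\mu)$ is commutative, hence so is $\Kir^\lambda(\h)=(S(\h)\otimes\End_\h(V^\lambda))^W$, and therefore so is its subalgebra $\Kir^\lambda(\g)$ via the injection $r_\lambda$ of \eqref{eq:Cartan_restriction_Kirillov_algebra}. For ``$\Rightarrow$'': by the previous paragraph the generic stalk of $\Kir^\lambda(\g)$ is $\End_\h(V^\lambda)$, which is non-commutative as soon as some $V^\lambda_\mu$ has dimension $\ge 2$; since localisations of a commutative ring are commutative, $V^\lambda$ must be weight-multiplicity free. (This is originally \parencite[Cor.\,1]{Kirillov2001_introduction}.)

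Part (v). For ``$\Rightarrow$'' I would compare degree-$0$ parts: $\Kir^\lambda(\g)_0=\End_\g(V^\lambda)=\C$ by Schur, while $\Kir^\lambda(\h)_0=(\End_\h V^\lambda)^W$ contains at least one invariant (an appropriate sum of identity operators) for each $W$-orbit on $\wt(\lambda)$, so its dimension is at least the number of such orbits; if $r_\lambda$ is an isomorphism these dimensions agree, forcing a single orbit, i.e.\ $\lambda$ minuscule. For ``$\Leftarrow$'': if $\lambda$ is minuscule then $V^\lambda$ is weight-multiplicity free and $\wt(\lambda)=W\lambda$, so $\End_\h(V^\lambda)\cong\C[W/W_\lambda]$ as a $W$-module with $W_\lambda=\operatorname{Stab}_W(\lambda)$; by the projection formula and Frobenius reciprocity, $\Kir^\lambda(\h)\cong S(\h)^{W_\lambda}$ compatibly with the $S(\h)^W$-algebra structure. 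Since $W_\lambda$ is a parabolic reflection subgroup, $S(\h)^{W_\lambda}$ is a polynomial ring, free of rank $|W/W_\lambda|=\#\wt(\lambda)$ over $S(\h)^W\cong S(\g)^\g$; by (iii) and the rank count above, $\Kir^\lambda(\g)$ is also free of rank $\#\wt(\lambda)$ over $S(\g)^\g$; and $r_\lambda$ is a degree-preserving injection between these two modules. It remains to show $r_\lambda$ is onto.

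This surjectivity is the main obstacle. Equality of \emph{ungraded} ranks does not suffice — the cokernel can be a nonzero torsion module supported in low degrees, as happens for non-minuscule weight-multiplicity free $V^\lambda$ — so what is genuinely needed is that the two \emph{graded} ranks coincide: the Poincaré polynomial of $\Kir^\lambda(\g)$ over $S(\g)^\g$, which by (iii) equals $\operatorname{Hilb}\big((\mathcal H\otimes\End V^\lambda)^\g\big)$ and is thus a sum of generalised-exponent generating functions of the constituents of $\End(V^\lambda)$, must equal $\operatorname{Hilb}\big(S(\h)^{W_\lambda}/S(\h)^W_+\big)=\prod_i(1-t^{d_i})/(1-t^{e_i})=\sum_{w\in W^\lambda}t^{\ell(w)}$, the Dynkin polynomial. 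Granting this, $r_\lambda$ is an isomorphism in each degree (an injection of finite-dimensional spaces of equal dimension), hence an isomorphism; equivalently, by graded Nakayama one only needs surjectivity after reducing modulo $S(\g)^\g_+$, where the two fibres now have equal dimension. I would establish the graded-rank coincidence either by computing the generalised exponents of $\End(V^\lambda)=V^\lambda\otimes(V^\lambda)^*$ for minuscule $\lambda$ directly (exploiting that this module is ``small''), or by reading them off the affine paving of the flag variety $G^\vee/P_\lambda$; this is the computation of \cite{Panyushev2004_endoj}, which one may alternatively simply cite.
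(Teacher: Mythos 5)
Your argument is correct, and it is substantially more self-contained than the paper's proof, which handles (i)--(ii) exactly as you do and then simply cites the literature: Panyushev for (iii) and (v), Kirillov for (iv). What you do differently: for (iii) you run the Kostant separation-of-variables argument ($S(\g)\cong S(\g)^\g\otimes\mathcal H$, multiplicities in $\mathcal H$ given by zero-weight spaces) rather than quoting \parencite[Thm.\,1.1]{Panyushev2004_endoj}; for (iv)$\Rightarrow$ you use surjectivity of evaluation at a regular semisimple element onto $\End_\h(V^\lambda)$ (closed orbit plus reductivity plus algebraic Peter--Weyl), which is precisely the content the paper only establishes later in Proposition \ref{prop:evaluation_onto} via Panyushev's localisation lemma, so your sketch is an independent and legitimate justification and there is no circularity; and your degree-zero comparison for (v)$\Rightarrow$ (Schur on one side, one invariant idempotent per $W$-orbit of weights on the other) is a clean direct argument the paper does not spell out. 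The one place where you do not close the argument yourself is (v)$\Leftarrow$: after correctly reducing to the identity of graded ranks (Hilbert series of $(\mathcal H\otimes\End V^\lambda)^\g$ versus the Dynkin polynomial $\prod_i(1-t^{d_i})/(1-t^{e_i})$ — and you rightly note that ungraded rank equality is insufficient), you defer the generalised-exponent computation to \cite{Panyushev2004_endoj}. Since the paper's own proof of (v) is nothing but the citation of \parencite[Cor.\,2.9]{Panyushev2004_endoj}, this deferral puts you on exactly the same footing as the paper while making transparent what the cited result actually has to deliver; the trade-off is that your route is longer, whereas the paper's buys brevity at the cost of opacity about where minusculity enters.
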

\begin{proof}
    The canonical grading of $S(\g)$ defines a grading of $S(\g)\otimes \End(V^\lambda)$ (in which the second factor contributes trivially). It is preserved by the diagonal $\g$-action and therefore restricts to the grading of $\Kir^\lambda(\g)$ of part (i). Part (ii) is clear. For part (iii), combine \cite[Thm.\,1.1]{Panyushev2004_endoj} with the second paragraph of \cite[p.\,277]{Panyushev2004_endoj}. Part (iv) is \cite[Cor.\,1]{Kirillov2001_introduction}, and part (v) is \cite[Cor.\,2.9]{Panyushev2004_endoj}.
\end{proof}

Recently, Hausel \cite{Hausel2024_avatars}, building on previous work including \cite{Rybnikov2006_argument_shift, FFR2010_irregular_singularity}, has introduced certain maximal commutative subalgebras of Kirillov algebras, called \emph{big algebras}. The minuscule Kirillov algebras studied in this paper are the simplest examples of big algebras, and general big algebras provide important motivation. We now summarise some of their key properties. For every $\lambda$, Hausel's big algebra $\B^\lambda(\g)$ is a maximal commutative graded $S(\g)^\g$-subalgebra of $\Kir^\lambda(\g)$. The morphisms
$$
    \Spec \B^\lambda(\g)
    \to
    \Spec Z(\Kir^\lambda(\g))
    \to
    \Spec S(\g)^\g
$$
(with $Z$ denoting the centre) appear to encode important invariants of $V^\lambda$ such as its Kashiwara crystal \cite[final slide]{Hausel2024_StonyBrook} (see also \cite{HKRW2020_crystals}). Hausel has announced the following geometric model for these algebras:
\begin{thm}[\cf \cite{Hausel2024_avatars}, Theorem 3.1]
\label{thm:geom_model}
    Let $\g$ be a complex semisimple Lie algebra, $G$ the connected simply connected Lie group with Lie algebra $\g$, and $G^\vee$ the Langlands dual group of $G$. For any dominant integral weight $\lambda$ of $\g$, let $\Gr^{\le\lambda}$ denote the affine Schubert variety of $G^\vee$ labelled by $\lambda$. Then we have 
    \begin{itemize}
        \item[(i)] $Z(\Kir^\lambda(\g)) \cong H^{2*}_{G^\vee}(\Gr^{\le\lambda})$
        \item[(ii)] $\B^\lambda(\g) \cong IH^{2*}_{G^\vee}(\Gr^{\le\lambda})$, the $G^\vee$-equivariant intersection cohomology of $\Gr^{\le\lambda}$, as a module over $H^{2*}_{G^\vee}(\Gr^{\le\lambda})$. 
    \end{itemize}
\end{thm}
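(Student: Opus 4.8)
The plan is to deduce Theorem~\ref{thm:geom_model} from the geometric Satake equivalence together with its ``equivariant enhancement''. Write $\Gr = \Gr_{G^\vee}$ for the affine Grassmannian, $T^\vee \le G^\vee$ for a maximal torus, and $\Sat\colon \mathsf{Perv}_{G^\vee[[t]]}(\Gr) \xrightarrow{\sim} \Rep(G)$ for the Satake equivalence, which sends $\IC_\lambda \coloneqq \IC(\Gr^{\le\lambda})$ to $V^\lambda$. The starting point is the identification $H^{2*}_{G^\vee} \cong S(\g)^\g$ already used in \eqref{eq:geom_model_diag} (via \eqref{eq:Chern-Weil} and Chevalley restriction for $G^\vee$, whose Cartan and Weyl group are dual to those of $\g$), which matches the base rings on both sides of (i) and (ii). The key input beyond $\Sat$ itself is that the $G^\vee$-equivariant hypercohomology functor $\mathcal F \mapsto H^{2*}_{G^\vee}(\Gr, \mathcal F)$, transported through $\Sat$, is identified (Ginzburg; Bezrukavnikov--Finkelberg, via Kostant--Whittaker reduction) with an explicit functor $\Rep(G) \to S(\g)^\g$-modules refining the fibre functor: on $V$ it produces a module free of rank $\dim V$ over $S(\g)^\g$, and this refinement is monoidal, so it also controls the algebra structures coming from cup product.

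For part (i), note that the constant sheaf $\underline\C_{\Gr^{\le\lambda}}$, pushed forward to $\Gr$, is not simple but is a canonical iterated extension of shifted copies of the $\IC_\mu$ with $\mu \preceq \lambda$, hence corresponds under $\Sat$ to a definite object of $\Rep(G)$. Since $\Gr^{\le\lambda}$ is paved by affine spaces (the Iwahori orbits through its $T^\vee$-fixed points, which are exactly the $t^\mu$ for $\mu \in \wt(\lambda)$), it is equivariantly formal; so $H^{2*}_{G^\vee}(\Gr^{\le\lambda})$ is a commutative $S(\g)^\g$-algebra, free of rank $\#\wt(\lambda)$, injecting into its localisation $\bigoplus_{\mu \in \wt(\lambda)} H^{2*}_{T^\vee}$ cut out by GKM congruences along the $T^\vee$-stable curves. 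On the representation side, $Z(\Kir^\lambda(\g))$ is commutative of generic rank $\#\wt(\lambda)$ over $S(\g)^\g$ (its fibre at a regular semisimple $x \in \g$ is $Z\big((\End V^\lambda)^{G_x}\big) = \C^{\wt(\lambda)}$), and via Panyushev's injection \eqref{eq:Cartan_restriction_Kirillov_algebra} it also embeds into $\Kir^\lambda(\h)$, which one rewrites as a ring of $W$-equivariant functions on $\h^* \times \wt(\lambda)$. The isomorphism in (i) then comes from checking that the GKM edge congruences and the $W$-equivariance/divisibility conditions defining the two subrings coincide; this matching of affine-Weyl-group combinatorics with the structure of $V^\lambda$ is the crux for (i).

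For part (ii), apply the same localisation strategy to $IH^{2*}_{G^\vee}(\Gr^{\le\lambda}) = H^{2*}_{G^\vee}(\Gr, \IC_\lambda)$: it is a module over $H^{2*}_{G^\vee}(\Gr^{\le\lambda})$ via $\underline\C \to \IC_\lambda$, is equivariantly formal by pointwise purity/parity of $\IC_\lambda$ on $\Gr$, hence free over $S(\g)^\g$ of rank $\dim H^*(\Gr,\IC_\lambda) = \dim V^\lambda$ (by $\Sat$), and its localisation summand at $t^\mu$ has rank $\dim V^\lambda[\mu]$ (the total stalk dimension of $\IC_\lambda$ there, Lusztig's $q$-analogue of weight multiplicity). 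This matches Hausel's big algebra: $\B^\lambda(\g)$ is a maximal commutative $S(\g)^\g$-subalgebra of $\Kir^\lambda(\g)$ whose generic fibre is the ``regular'' maximal commutative subalgebra of $(\End V^\lambda)^{T} = \prod_\mu \End(V^\lambda[\mu])$, meeting the $\mu$-block in a subalgebra of dimension $\dim V^\lambda[\mu]$, for total rank $\dim V^\lambda$, compatibly with the ring of (i) through the argument-shift (Rybnikov) construction underlying \cite{Rybnikov2006_argument_shift, FFR2010_irregular_singularity}. Matching fixed-point localisations and edge conditions as in (i) then yields the module isomorphism over $Z(\Kir^\lambda(\g)) \cong H^{2*}_{G^\vee}(\Gr^{\le\lambda})$.

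The main obstacle is the equivariant-enhancement step: promoting the vector-space statement $H^*(\Gr,\IC_\lambda) \cong V^\lambda$ to an identification of $S(\g)^\g$-modules — and, for the rings, of $S(\g)^\g$-algebras — on the nose. Concretely this needs (a) equivariant formality of $H^*$ and $IH^*$ of affine Schubert varieties (the affine paving and parity vanishing), (b) matching the GKM presentation of $H^*_{T^\vee}(\Gr^{\le\lambda})$ — and its subtler $IH$-analogue, where the $\IC$-stalks intervene — with Panyushev's Cartan-restriction presentation of $\Kir^\lambda(\g)$, and (c) matching cup product with the Kirillov-algebra product, which genuinely uses the monoidal (fusion) structure of $\Sat$. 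The efficient route is to invoke the Bezrukavnikov--Finkelberg ``derived Satake'' package, which delivers (a)--(c) at once as the statement that equivariant hypercohomology is a symmetric monoidal functor to $\C[\g^*]^G$-modules; the residual content of Theorem~\ref{thm:geom_model}, and the genuinely new point in \cite{Hausel2024_avatars}, is then to identify the images of $\underline\C_{\Gr^{\le\lambda}}$ and $\IC_\lambda$ under this functor with $Z(\Kir^\lambda(\g))$ and $\B^\lambda(\g)$ by comparing explicit generators and relations.
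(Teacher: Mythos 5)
First, a point of comparison: the paper does not prove Theorem \ref{thm:geom_model} at all --- it is imported verbatim from \cite{Hausel2024_avatars} (Theorem 3.1), and the only part the paper actually uses is the minuscule case, for which Remark \ref{rmk:geom_model_explicit} gives a short, self-contained substitute: Panyushev's explicit isomorphism $\Kir^\lambda(\g)\cong S(\h)^{W_\lambda}$ \parencite[Thm.\,2.6]{Panyushev2004_endoj}, Chevalley restriction (Theorem \ref{thm:CRT}), and Chern--Weil \eqref{eq:Chern-Weil}, giving $\Kir^\lambda(\g)\cong H^{2*}_{G^\vee}(G^\vee/P_\lambda)$ with no geometric Satake, GKM localisation, or intersection cohomology anywhere. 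So there is no in-paper proof to match your argument against; what you have written is an outline of the strategy of the cited source (equivariant/derived Satake plus fixed-point localisation), not of anything in this paper, and for the corollary the paper needs you could have argued far more economically along the lines of Remark \ref{rmk:geom_model_explicit}.

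Judged as a proof of the theorem itself, your proposal has genuine gaps. The decisive steps are deferred rather than carried out: in (i) you say the ``crux'' is to check that the GKM edge congruences agree with the $W$-equivariance/divisibility conditions in Panyushev's Cartan-restriction picture, and in (ii) you say the ``residual content'' is to identify the images of $\underline\C_{\Gr^{\le\lambda}}$ and $\IC_\lambda$ under the Bezrukavnikov--Finkelberg functor with $Z(\Kir^\lambda(\g))$ and $\B^\lambda(\g)$ ``by comparing explicit generators and relations''. Those two identifications \emph{are} the theorem; invoking the derived-Satake package and then postponing them means the statement is restated, not proved. There is also a technical misstep in (i): the (shifted) constant sheaf on the singular variety $\Gr^{\le\lambda}$ is in general neither perverse nor a direct sum of shifted $\IC_\mu$'s, so it does not ``correspond under $\Sat$ to a definite object of $\Rep(G)$''; part (i) should be treated as a statement about the equivariant cohomology ring of the variety (equivariant formality plus fixed-point/localisation arguments suffice for freeness and rank $\#\wt(\lambda)$), and the comparison with $Z(\Kir^\lambda(\g))$ still requires the unexecuted matching of presentations. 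Similarly, the description of the generic fibre of $\B^\lambda(\g)$ as the block-diagonal commutative subalgebra with $\mu$-block of dimension $\dim V^\lambda_\mu$ is asserted, not derived from the argument-shift construction. As it stands, the proposal is a plausible roadmap to the result of \cite{Hausel2024_avatars}, but not a proof.
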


In this paper, we focus on the minuscule case. Recall that a dominant integral weight $\lambda$ is called \emph{minuscule} if it fulfils one of the following equivalent conditions:
\begin{itemize}
    \item there does not exist a positive root $\alpha$ for which $\lambda - \alpha$ is dominant,
    \item the weights of $V^\lambda$ are all contained in the Weyl group orbit of $\lambda$.
\end{itemize}
Since the highest weight space of $V^\lambda$ is one-dimensional, the second condition shows that $V^\lambda$ is then weight multiplicity free. The minuscule weights form a subset of the fundamental weights; in particular there are finitely many.

\begin{cor}
\label{cor:geom_model}
    Let $\g$, $G^\vee$ and $\lambda$ be as in Theorem \ref{thm:geom_model}. Let $P_\lambda\le G^\vee$ be the parabolic subgroup defined by $\lambda$.\footnote{View $\lambda$ as a cocharacter $\C^\times \to G^\vee$, with derivative $d\lambda\colon \C\to \g$. Then the Lie algebra of $P^\lambda$ is the direct sum of the non-negative eigenspaces of $d\lambda(1)\in \g$, and $P_\lambda$ itself is the normaliser in $G^\vee$ of that Lie algebra.}
    If $\lambda$ is minuscule then $\Kir^\lambda(\g)$ is commutative and isomorphic to $H^{2*}_{G^\vee}(G^\vee / P_\lambda)$.
\end{cor}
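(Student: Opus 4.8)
The plan is to deduce the corollary from the commutativity criterion of Proposition~\ref{prop:kirillov_facts}(iv) together with the geometric model of Theorem~\ref{thm:geom_model}, using the classical fact that a minuscule affine Schubert variety of $G^\vee$ is a partial flag variety $G^\vee/P_\lambda$. For commutativity, recall the second characterisation of minusculeness quoted above: all weights of $V^\lambda$ lie in the single Weyl orbit $W\lambda$. Since $W$ acts transitively on $W\lambda$ and the highest weight space of $V^\lambda$ is one-dimensional, every weight space of $V^\lambda$ is one-dimensional, \ie\ $V^\lambda$ is weight-multiplicity free; Proposition~\ref{prop:kirillov_facts}(iv) then gives that $\Kir^\lambda(\g)$ is commutative.

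Once commutativity is known, the chain $\B^\lambda(\g)\subseteq Z(\Kir^\lambda(\g))\subseteq \Kir^\lambda(\g)$ collapses: the centre of a commutative ring is the whole ring, and $\B^\lambda(\g)$ --- by definition a maximal commutative graded $S(\g)^\g$-subalgebra of $\Kir^\lambda(\g)$ --- must equal $\Kir^\lambda(\g)$, since the latter is itself such a subalgebra (by Proposition~\ref{prop:kirillov_facts}(i)--(ii)) containing it. Hence Theorem~\ref{thm:geom_model}(i) applies directly and yields a graded ring isomorphism
\[
    \Kir^\lambda(\g)\;=\;Z(\Kir^\lambda(\g))\;\cong\;H^{2*}_{G^\vee}(\Gr^{\le\lambda}),
\]
compatible with the inclusions of $S(\g)^\g$ and $H^{2*}_{G^\vee}$ by the functoriality recalled in \S\ref{sec:background}. (Equivalently one may use part (ii), since $\Gr^{\le\lambda}$ is smooth in the minuscule case, so equivariant intersection cohomology coincides with equivariant cohomology.)

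The remaining, and main, step is to identify $H^{2*}_{G^\vee}(\Gr^{\le\lambda})$ with $H^{2*}_{G^\vee}(G^\vee/P_\lambda)$. For $\lambda$ minuscule the only dominant weight $\le\lambda$ is $\lambda$ itself, so $\Gr^{\le\lambda}$ equals the single orbit $\Gr^\lambda$; evaluation at $t=0$ identifies the stabiliser of the point $t^\lambda$ with the preimage of the parabolic $P_\lambda$ --- here one uses minusculeness in the form that $\lambda$ pairs to a value in $\{-1,0,1\}$ with every root of $G^\vee$ --- and since the kernel of this evaluation lies inside the stabiliser one obtains a $G^\vee$-equivariant isomorphism $\Gr^{\le\lambda}\cong G^\vee/P_\lambda$ (see \eg\ the standard references on the affine Grassmannian). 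Taking $G^\vee$-equivariant cohomology and composing with the isomorphism of the previous paragraph gives the statement. I expect the only genuine difficulty to be bookkeeping: one must check that the parabolic produced by this orbit computation really is the $P_\lambda$ of the footnote, up to the harmless replacement of a parabolic by an opposite or $W$-conjugate one --- harmless because such a replacement only alters the Levi factor within its conjugacy class, while $H^{2*}_{G^\vee}(G^\vee/P)$ is the equivariant cohomology of that Levi factor, which depends only on its conjugacy class.
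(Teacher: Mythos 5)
Your proposal is correct and takes essentially the same route as the paper: commutativity from minuscule $\Rightarrow$ weight-multiplicity-free together with Proposition~\ref{prop:kirillov_facts}(iv), then Theorem~\ref{thm:geom_model} (with $\B^\lambda(\g)=Z(\Kir^\lambda(\g))=\Kir^\lambda(\g)$, or equivalently via smoothness and intersection cohomology) combined with the identification $\Gr^{\le\lambda}\cong G^\vee/P_\lambda$. The only difference is that the paper simply cites \parencite[Lemma 2.1.13]{Zhu2016_introduction} for that last $G^\vee$-equivariant isomorphism, whereas you sketch the orbit/stabiliser argument yourself, including the correct observation that any ambiguity between conjugate or opposite parabolics is harmless at the level of $H^{2*}_{G^\vee}(G^\vee/P)\cong H^{2*}_{L}$.
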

\begin{proof}
    Since $\lambda$ is minuscule, the affine Schubert variety $\Gr^{\le\lambda}$ is smooth, and in fact isomorphic (as a $G^\vee$-space) to $G^\vee/P_\lambda$ \cite[Lemma 2.1.13]{Zhu2016_introduction}.
\end{proof}
\begin{rmk}
\label{rmk:geom_model_explicit}
    We appeal to Theorem \ref{thm:geom_model} mostly to contextualise the result. Without appealing to the theorem, an isomorphism $\Kir^\lambda(\g)\cong H^{2*}_{G^\vee}(G^\vee / P_\lambda)$ for minuscule $\lambda$ can be obtained more concretely as follows. For any Cartan subalgebra $\h\le \g$, Panyushev \cite[Thm.\,2.6]{Panyushev2004_endoj} gives an explicit isomorphism $\Kir^\lambda(\g) \cong S(\h)^{W_\lambda}$, where $W_\lambda$ is the stabiliser of $\lambda$ in the Weyl group. The Weyl group is preserved by Langlands duality, and $\h^* \cong \h$ (as a $W$-space) can be viewed as an abstract Cartan subalgebra of the Langlands dual $\g^\vee$. Via the Chevalley restriction theorem (see Theorem \ref{thm:CRT} below) and the Chern-Weil isomorphism \eqref{eq:Chern-Weil}, we then get
    $$
        \Kir^\lambda(\g)
        \cong
        S(\h)^{W_\lambda}
        \cong
        S(\h^*)^{W_\lambda}
        \cong
        S(\el_\lambda)^{\el_\lambda}
        \cong
        H^{2*}_{G^\vee}(G^\vee/P_\lambda),
    $$
    where $\el_\lambda$ is a canonical Levi subalgebra defined by $\lambda$ (see also the discussion at the end of Section \ref{sec:real_str}).
\end{rmk}

We end this section with a brief discussion of fixed points and coinvariant rings. In general, if $X$ is any scheme with a self-morphism $f\colon X\to X$, the \emph{fixed-point scheme} $X^f$ is the largest closed subscheme on which $f$ acts trivially. In other words, $X^f$ is the equaliser of $f$ and $\id_X$ in the category of schemes. In the case of affine schemes, there is a particularly easy description:
\begin{propdef}
\label{prop:coinv_ring}
    Let $\psi$ be an endomorphism of a ring $A$, and $f$ the induced self-morphism of $\Spec A$. Then the fixed-point scheme $(\Spec A)^f$ is given by $\Spec A_\psi\subseteq \Spec A$, where 
    $$
        A_\psi\coloneqq A/(a - \psi(a) \colon a\in A)
    $$
    is the \emph{coinvariant ring}.
\end{propdef}
\begin{proof}
    As a closed subscheme of the affine scheme $\Spec A$, the fixed-point scheme must be of the form $\Spec (A/I)$ for some ideal $I$ of $A$ \cite[\href{https://stacks.math.columbia.edu/tag/01IF}{Tag 01IF}]{stacks-project}. The projection $A\to A/I$ is then a coequaliser of $\psi$ and $\id_A$ in the category of commutative rings, so $I$ is the ideal $(a-\psi(a)\colon a\in A)$ as claimed.
\end{proof}

\section{Action of real structures}
\label{sec:real_str}
Here we review real structures of complex Lie groups and describe their action on partial flag varieties. We then study the resulting involutions on equivariant cohomology. We start by setting up definitions, referring to \cite[\S2--5]{Onishchik2004_real} or \cite[\S2]{GPRa2019_involutions} for details.
\begin{defn}
    A \emph{real form} of a complex Lie algebra $\g$ is a real Lie algebra $\g_0$ whose complexification $\g_0\otimes_\R \C$ is isomorphic to $\g$ (as a complex Lie algebra). 
    A \emph{real structure} of a complex Lie algebra $\g$ is a conjugate-linear Lie algebra automorphism $\sigma\colon \g\to \g$ of order two. 
\end{defn}
If $\sigma$ is a real structure on a complex Lie algebra $\g$, then one readily checks that the fixed points $\g^\sigma$ make up a real form of $\g$. Conversely, for a real form $\g_0$ the complex conjugation on $\g_0\otimes_\R \C$ can be used to define a corresponding real structure on $\g$ (uniquely up to an automorphism of $\g$). This way, real forms and real structures are equivalent. The latter concept generalises more conveniently to complex Lie groups: 
\begin{defn}
    A \emph{real structure} of a complex Lie group $G$ is an antiholomorphic automorphism of order two.
\end{defn}
For our purposes, real structures on the group level are equivalent to those on the Lie algebra level:
\begin{lem}
    Let $G$ be a complex Lie group with Lie algebra $\g$. Any real structure on $G$ defines a real structure on $\g$ by differentiation, and this assignment is injective if $G$ is connected. Conversely, if $G$ is connected and either simply connected or of adjoint type, then any real structure on $\g$ integrates to a unique real structure on $G$. 
\end{lem}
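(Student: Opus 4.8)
The plan is to treat the two directions separately, the common tool being the intertwining identity $\sigma(\exp X) = \exp(d\sigma_e X)$, valid for \emph{any} smooth group homomorphism $\sigma$ (holomorphic or antiholomorphic alike): indeed $t\mapsto \sigma(\exp tX)$ is a one-parameter subgroup of the target with velocity $d\sigma_e X$ at the origin, hence equals $t\mapsto \exp(t\,d\sigma_e X)$. I will also use freely that a connected Lie group is generated by $\exp(\g)$, and that left translations on a complex Lie group are biholomorphisms.

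For the forward direction, let $\sigma$ be a real structure on $G$. Its differential $d\sigma_e\colon \g\to\g$ is conjugate-linear since $\sigma$ is antiholomorphic, is a Lie algebra automorphism since $\sigma$ is a group automorphism, and satisfies $(d\sigma_e)^2 = d(\sigma\circ\sigma)_e = \id_\g$ by the chain rule (as $\sigma(e)=e$); it is not the identity, because a conjugate-linear self-map of a nonzero complex vector space cannot be (and $\g=0$ forces $G=\{e\}$ by connectedness, a vacuous case). Thus $d\sigma_e$ is a real structure on $\g$. For injectivity when $G$ is connected: if $d(\sigma_1)_e = d(\sigma_2)_e$, the intertwining identity gives $\sigma_1 = \sigma_2$ on $\exp(\g)$, hence on all of $G$ since $\exp(\g)$ generates $G$ and both $\sigma_i$ are homomorphisms.

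For the converse, assume first that $G$ is connected and simply connected, and let $\rho$ be a real structure on $\g$. Regard $\rho$ as an automorphism of the underlying \emph{real} Lie algebra of $\g$; since the underlying real Lie group $G$ is simply connected, Lie's integration theorem yields a unique real-analytic group automorphism $\sigma\colon G\to G$ with $d\sigma_e=\rho$ (it is an automorphism because $\rho^{-1}=\rho$ also integrates, and it has order two because $\sigma^2$ integrates $\rho^2=\id_\g$, so $\sigma^2=\id_G$ by uniqueness; and $\sigma\neq\id_G$ as $\rho\neq\id_\g$). It remains to see that $\sigma$ is antiholomorphic. From $\sigma\circ L_g = L_{\sigma(g)}\circ\sigma$ we get $d\sigma_g = d(L_{\sigma(g)})_e\circ\rho\circ(d(L_g)_e)^{-1}$, which is conjugate-linear for every $g$ because $\rho$ is and the left translations are $\C$-linear on tangent spaces; a smooth map of complex manifolds whose differential is everywhere conjugate-linear is antiholomorphic. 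Hence $\sigma$ is a real structure on $G$ integrating $\rho$, and uniqueness follows from the injectivity already established.

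Finally, for $G$ connected of adjoint type, let $p\colon\tilde G\to G$ be the universal cover; then $\tilde G$ is a connected, simply connected complex Lie group with Lie algebra $\g$, and $\ker p$ is exactly $Z(\tilde G)$ — this is precisely the meaning of ``adjoint type'', since $Z(G)=Z(\tilde G)/\ker p$ and the latter is trivial (a continuity argument shows commutators $[g,\tilde h]\in\ker p$ must be trivial). By the simply connected case, $\rho$ integrates to a real structure $\tilde\sigma$ on $\tilde G$; being an automorphism, $\tilde\sigma$ preserves the characteristic subgroup $Z(\tilde G)=\ker p$, hence descends to an antiholomorphic group automorphism $\sigma$ of $G=\tilde G/\ker p$ with $\sigma\circ p = p\circ\tilde\sigma$, and then $d\sigma_e=\rho$ and $\sigma^2=\id_G$ follow because $p$ is a local biholomorphism. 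Uniqueness is again the injectivity statement. I expect the only genuine subtlety to be bookkeeping: verifying that the automorphism manufactured by Lie's theorem is \emph{antiholomorphic} rather than merely real-analytic (handled by the left-translation computation), and confirming that the adjoint-type hypothesis is used exactly where the descent of $\tilde\sigma$ needs $\ker p$ to be characteristic in $\tilde G$.
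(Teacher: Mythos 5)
Your argument is correct and follows essentially the same route as the paper's (terse) proof: differentiate for the forward direction, integrate on the simply connected group, and in the adjoint case lift to the universal cover and descend using that the lift preserves the centre $Z(\tilde G)=\ker p$. You merely spell out details the paper leaves as standard, such as verifying antiholomorphicity of the integrated automorphism via the left-translation computation.
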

\begin{proof}
    The passage from the group level to the Lie algebra level is standard. It is also clear that real structures of $\g$ lift to $G$ when $G$ is connected and simply connected. For the adjoint type case, it suffices to lift to a simply connected covering group $\Tilde{G}$ and to observe that this lift preserves the centre of $\Tilde{G}$.
\end{proof}

Several equivalence relations on the set of real structures are commonly used (though their names are not entirely standardised):
\begin{defn}[Equivalence relations for real structures]
\label{def:real_str_equiv_classes}
    Let $G$ be a connected complex Lie group and $\g$ its Lie algebra. Let $\sigma_1$, $\sigma_2$ both be real structures on $\g$ or both real structures on $G$.
    \begin{itemize}
        \item[(i)] $\Aut(\g)$ denotes the group of complex Lie algebra automorphisms of $\g$. $\Int(\g)$ denotes its subgroup generated by elements $\exp(\ad(X))$ for $X\in \g$.
        \item[(ii)] $\Aut(G)$ denotes the group of complex Lie group automorphisms of $G$. $\Int(G)$ denotes its subgroup consisting of elements $\Conj_g\coloneqq (h\mapsto ghg\inv)$ for $g\in G$.
        \item[(iii)] We say that $\sigma_1$ and $\sigma_2$ are \emph{isomorphic}, denoted $\sigma_1\approx \sigma_2$, if $\sigma_1 = \psi \sigma_2 \psi\inv$ for $\psi \in \Aut(\g)$ resp.\;$\psi \in \Aut(G)$. This is equivalent to the corresponding real forms being (abstractly) isomorphic.
        \item[(iv)] We say that $\sigma_1$ and $\sigma_2$ are \emph{inner-isomorphic}, denoted $\sigma_1\approx_i \sigma_2$, if $\sigma_1 = \psi \sigma_2 \psi\inv$ for $\psi \in \Int(\g)$ resp.\;$\psi \in \Int(G)$. This is equivalent to the corresponding real forms being isomorphic via conjugation by some $g\in G$.
        \item[(v)] We say that $\sigma_1$ and $\sigma_2$ are \emph{inner (to each other)}, denoted $\sigma_1 \sim_i \sigma_2$, if $\sigma_1 = \psi \sigma_2$ for $\psi \in \Int(\g)$ resp.\;$\psi \in \Int(G)$. Equivalently, $\sigma_1 = \sigma_2 \chi$ for $\chi$ in $\Int(\g)$ resp.\;$\Int(G)$. The corresponding equivalence classes are called \emph{inner classes}.
    \end{itemize}
\end{defn}
Note that inner-isomorphic real structures are automatically isomorphic as well as inner to each other (the latter because $\Int(\g)$ is normal in $\Aut(\g)$). We now come to a classical result of É.\,Cartan \cite{Cartan1914} relating real structures to complex involutions. To state it, we define equivalence relations for such automorphisms analogous to those in Definition \ref{def:real_str_equiv_classes}:
\begin{defn}[Equivalence relations for holomorphic involutions]
    Let $G$ be a connected complex Lie group and $\g$ its Lie algebra. We write $\Aut_2(\g)$ and $\Aut_2(G)$ for the subgroups of involutions in $\Aut(\g)$ resp.\;$\Aut(G)$. Let either $\theta_1,\theta_2 \in\Aut_2(\g)$ or $\theta_1,\theta_2 \in \Aut_2(G)$.
    \begin{itemize}
        \item[(i)] We say that $\theta_1$ and $\theta_2$ are \emph{isomorphic}, denoted $\theta_1\approx \theta_2$, if $\theta_1 = \psi \theta_2 \psi\inv$ for $\psi \in \Aut(\g)$ resp.\;$\psi \in \Aut(G)$.
        \item[(ii)] We say that $\theta_1$ and $\theta_2$ are \emph{inner-isomorphic}, denoted $\theta_1\approx_i \theta_2$, if $\theta_1 = \psi \theta_2 \psi\inv$ for $\psi \in \Int(\g)$ resp.\;$\psi \in \Int(G)$.
        \item[(iii)] We say that $\theta_1$ and $\theta_2$ are \emph{inner (to each other)}, denoted $\theta_1\sim_i \theta_2$, if $\theta_1 = \psi \theta_2$ for $\psi \in \Int(\g)$ resp.\;$\psi \in \Int(G)$. Equivalently, $\theta_1 = \theta_2 \chi$ for $\chi$ in $\Int(\g)$ resp.\;$\Int(G)$. The corresponding equivalence classes are called \emph{inner classes}.
    \end{itemize}
\end{defn}
\begin{thm}[\cf \eg\ \S3 of \cite{Onishchik2004_real}]
\label{thm:Cartan_correspondence}
    Let $\g$ be a complex semisimple Lie algebra. Then $\g$ has a \emph{compact real structure}, \ie\ a real structure $\tau$ such that $\Int(\g^\tau)$ is compact. Moreover, every real structure $\sigma$ of $\g$ is inner-isomorphic to some $\sigma'$ which commutes with $\tau$. Then $\theta\coloneqq \sigma'\tau = \tau\sigma' \in \Aut_2(\g)$. This defines bijections
    \begin{alignat*}{3}
        \{\text{Real structures of $\g$}\}/\!\approx\;&
        \quad
        &&\longleftrightarrow
        \quad
        && \Aut_2(\g)/\!\approx
        \\
        \{\text{Real structures of $\g$}\}/\!\approx_i&
        \quad
        &&\longleftrightarrow
        \quad
        && \Aut_2(\g)/\!\approx_i
        \\
        \{\text{Real structures of $\g$}\}/\!\sim_i&
        \quad
        &&\longleftrightarrow
        \quad
        && \Aut_2(\g)/\!\sim_i
        \\
        [\sigma]&
        \quad
        &&\longleftrightarrow
        \quad
        && [\theta]
    \end{alignat*}
    which do not depend on the choice of $\tau$.
    If $G$ is a connected Lie group with Lie algebra $\g$, then $\tau$ lifts to $G$ and sets up an analogous correspondence between equivalence classes of real structures on $G$ and equivalence classes in $\Aut_2(G)$.
\end{thm}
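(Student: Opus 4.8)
This is the classical correspondence of É.\ Cartan, and I would establish it by the polar-decomposition method, which delivers the compact real structure, the required conjugation, and all three bijections in one stroke. The first ingredient is existence of a compact real structure: fix a Cartan subalgebra, a root system and a Chevalley basis $\{h_i\}\cup\{e_\alpha\}$ of $\g$; the $\R$-span of $\{ih_i\}$, $\{e_\alpha-e_{-\alpha}\}$ and $\{i(e_\alpha+e_{-\alpha})\}$ is a real form $\u\le\g$ on which the Killing form $B$ is negative definite, so $\u$ is a compact semisimple Lie algebra and $\Int(\u)$, a connected Lie subgroup of the orthogonal group of $-B|_\u$, is compact. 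Take $\tau$ to be the conjugation of $\g$ fixing $\u$.

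Next comes the conjugation step. Given any real structure $\sigma$, set $B_\tau(X,Y):=-B(X,\tau Y)$; conjugate-linearity of $\tau$ and positivity of $-B|_\u$ make $B_\tau$ a positive-definite Hermitian form. Since $B$ is invariant, the $\C$-linear automorphism $\omega:=\sigma\tau$ has $B_\tau$-adjoint $\tau\omega\inv\tau=\omega$, so $\omega$ is self-adjoint and $P:=\omega^2$ is positive-definite self-adjoint. On its eigenspace decomposition $\g=\bigoplus_{c>0}\g_c$ one has $[\g_c,\g_{c'}]\subseteq\g_{cc'}$, so $\log P$ (acting by $\log c$ on $\g_c$) is a derivation, hence inner as $\g$ is semisimple; thus $P^t=\exp(t\log P)\in\Int(\g)$ for all $t\in\R$. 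From $\tau P\tau=(\tau\sigma)^2=\sigma P\sigma$ one deduces $\tau\g_c=\sigma\g_c=\g_{1/c}$, and bookkeeping of exponents then gives
\[
  \bigl(P^{s}\sigma P^{-s}\bigr)\,\tau\,\bigl(P^{s}\sigma P^{-s}\bigr)\,\tau \;=\; P^{\,1+4s}.
\]
Choosing $s=-\tfrac14$ shows $\sigma':=P^{-1/4}\sigma P^{1/4}$ is a real structure with $(\sigma'\tau)^2=\id$, hence $\sigma'\tau=\tau\sigma'$ and $\theta:=\sigma'\tau\in\Aut_2(\g)$; since $P^{-1/4}\in\Int(\g)$ we have $\sigma'\approx_i\sigma$, so none of the three classes changes. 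Applying the same construction with a second compact real structure in place of $\sigma$, and noting that two commuting compact real structures coincide (on the $(-1)$-eigenspace of their product, $B_\tau$ and $B_{\tau'}$ would have opposite signs), shows all compact real structures form a single $\Int(\g)$-orbit; hence the bijections, once set up, will not depend on $\tau$.

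For the bijections, write $\mathcal R^\tau$, $\mathcal I^\tau$ for the real structures, resp.\ holomorphic involutions, commuting with $\tau$. The maps $\sigma'\mapsto\sigma'\tau$ and $\theta\mapsto\theta\tau$ are mutually inverse bijections $\mathcal R^\tau\leftrightarrow\mathcal I^\tau$ (each composite has the right linearity, squares to $\id$, and commutes with $\tau$). The group $N:=\{\phi\in\Aut(\g):\phi\tau=\tau\phi\}$ acts by conjugation on both sides compatibly with this bijection, and one checks that each $\approx$-, $\approx_i$- or $\sim_i$-class of real structures meets $\mathcal R^\tau$ in a single orbit of $N$, of $N\cap\Int(\g)$, or of the corresponding inner data: surjectivity onto classes is exactly the conjugation step (and its verbatim analogue for holomorphic involutions), while injectivity for $\approx$ follows from the sharper statement that a compact real structure commuting with a fixed $\sigma$ is unique up to conjugation by $\Int(\g^\sigma)$ — equivalently, uniqueness of Cartan involutions of the real form $\g^\sigma$, again the polar-decomposition argument run inside $\g^\sigma$. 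For a connected group $G$ with Lie algebra $\g$ the compact $\tau$ lifts to $G$ (it stabilises the central subgroup of the simply connected cover defining $G$), and the whole argument transports along $\Aut(G)\to\Aut(\g)$ and $\Int(G)\to\Int(\g)$.

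I expect the main obstacle to be the conjugation step: making rigorous that the real powers $P^{t}$ are honest inner automorphisms (the ``derivation is inner'' point) and that the choice $s=-\tfrac14$ forces $\sigma'$ to commute with $\tau$, together with the bookkeeping needed to pass from single real structures to the three equivalence relations simultaneously — in particular the refinement from $\Int(\g)$- to $\Int(\g^\sigma)$-conjugacy of the commuting compact structure, and the lifting subtlety for connected $G$ that is neither simply connected nor of adjoint type.
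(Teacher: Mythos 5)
The paper does not prove Theorem \ref{thm:Cartan_correspondence} at all: it is quoted as a classical result of \'E.~Cartan with a pointer to \S3 of Onishchik's lectures, and the argument given there is precisely the polar-decomposition (Weyl unitary trick) proof you outline -- your construction of $\tau$ from a Chevalley basis, the self-adjointness of $\omega=\sigma\tau$ for $B_\tau$, the identity $(P^{s}\sigma P^{-s})\tau(P^{s}\sigma P^{-s})\tau=P^{1+4s}$ with $s=-\tfrac14$, the coincidence of commuting compact structures, and the descent of $\tau$ to any connected $G$ (valid because $Z(\widetilde G)\subset \widetilde G^{\tilde\tau}$, so every central subgroup is preserved) are all correct and are the standard steps. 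The only point I would tighten is the bookkeeping you yourself flag: for injectivity of $[\sigma]\mapsto[\theta]$ under $\approx$ and $\approx_i$ you need the uniqueness statement on the involution side (two compact structures commuting with a fixed $\theta$ are conjugate by an inner automorphism commuting with $\theta$), obtained by running the same polar-decomposition argument while carrying $\theta$ along, rather than the version for a fixed $\sigma$ alone; with that adjustment your proof is complete and matches the cited source.
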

\begin{defn}
\label{def:Cartan_involution}
    If a real structure $\sigma$ commutes with a compact real structure $\tau$, the corresponding involutive automorphism $\theta = \sigma\tau$ is called the \emph{Cartan involution (of $\sigma$ with respect to $\tau$)}.
\end{defn}

For later use, we also recall the notion of a pinning-preserving automorphism. A \emph{pinning} of a complex reductive Lie algebra $\g$ consists of a Cartan subalgebra $\h\le \g$, a choice of simple roots $\Pi$ for the root system of $(\g,\h)$, and a nonzero root vector $X_\alpha$ for every $\alpha \in \Pi$. 
\begin{defn}
\label{def:pinning}
    Let $\g$ be a complex reductive Lie algebra. An automorphism $\theta\in \Aut(\g)$ is called \emph{pinning-preserving} if there exists a pinning $\{\h, \Pi, \{X_\alpha\}_{\alpha \in \Pi}\}$ of $\g$ such that $\theta$ maps $\h$ to itself and permutes the $X_\alpha$ (hence also the simple roots). If $G$ is a connected Lie group with Lie algebra $\g$, then an automorphism of $G$ is, by definition, \emph{pinning-preserving} if its derivative is.\footnote{This is equivalent to the more common definition involving a pinning of $G$.}
\end{defn}

We now come to the action on partial flag varieties. For the remainder of this section, $G$ will denote a connected complex semisimple Lie group, and $\g$ its Lie algebra. Let $\h\le \g$ be a Cartan subalgebra with corresponding Cartan subgroup $H\coloneqq C_G(\h)\le G$, and let $\Sigma\le \h^*$ denote the root system defined by $(\g,\h)$. Recall that the parabolic subalgebras of $\g$ containing $\h$ can be parameterised by elements of $\h$ as follows: given $v\in \h$, let
$$
    \p_v \coloneqq \h \oplus \!\bigoplus_{\substack{\alpha \in \Sigma\\\alpha(v) \ge 0}} \g_\alpha,
$$
where $\g_\alpha \subset \g$ denotes the root space of $\alpha$. Equivalently, $\p_v$ is the direct sum of non-negative eigenspaces of $\ad(v)$. Each $\p_v$ is a parabolic subalgebra of $\g$ and defines a parabolic subgroup $P_v \coloneqq N_G(\p_v) \le G$. Now if $\lambda\in \Hom(\C^\times, H)$ is a cocharacter, we can take the derivative $d\lambda\colon \C\to \h$ and define
$
    P_\lambda \coloneqq P_{d\lambda(1)}.
$

Now let $\sigma$ be a real structure of $G$ which preserves $H$. This defines an action on the cocharacters given by
$$
    \Hom(\C^\times, H)\to \Hom(\C^\times, H),
    \quad
    \lambda\mapsto \sigma_*\lambda \coloneqq \sigma\circ \lambda \circ \overline{(\cdot)},
$$
with $\overline{(\cdot)}$ denoting complex conjugation on $\C^\times$. One easily verifies that $P_{\sigma_*\lambda} = \sigma(P_\lambda)$. We will be interested in the case where $P_{\sigma_*\lambda}$ is conjugate to $P_\lambda$, which can be expressed using the Weyl group. Recall that the Weyl group $W = N_G(H)/H$ has a natural action on the cocharacters induced by the action of $N_G(H)$. Explicitly, $g\in N_G(H)$ acts as $\lambda \mapsto \Conj_g\circ \lambda$ where $\Conj_g\colon H\to H$ denotes conjugation by $g$. Thus, it makes sense to speak of the \emph{Weyl group orbit} of a cocharacter $\lambda$, and we will focus on the case where $\sigma$ preserves this orbit.

\begin{rmk}(Notions of preserved orbits)
\label{rmk:notions_of_preserving}
\begin{enumerate}
    \item The explicit choice of Cartan subalgebra $\h$ (and Cartan subgroup $H$) is merely for convenience. If no choice is made, a coweight should be viewed as a homomorphism $\lambda\colon \C^\times \to G$ valued in semisimple elements. The action of a real structure $\sigma$ on coweights is defined as above, and now $G$ acts on these by $g\cdot \lambda \coloneqq \Conj_g\circ \lambda$. Given any Cartan subgroup $H$, we can arrange $\sigma(H) = H$ by replacing $\sigma$ with an inner-isomorphic real structure. If $\lambda$ takes values in a Cartan subgroup $H$ preserved by $\sigma$, then $\sigma$ preserves the $G$-orbit of $\lambda$ if and only if it preserves its Weyl group orbit.

    \item Note further that these notions of preserving orbits depend only on the inner class of $\sigma$. More precisely, for $\phi\in \Int(G)$, it is clear that $\sigma\phi$ preserves the $G$-orbit of $\lambda$ if and only if $\sigma$ does. If both $\sigma$ and $\sigma \phi$ preserve a Cartan subgroup $H$ such that $\lambda$ takes values in $H$, then $\phi$ normalises $H$ so that $(\sigma\phi)_*\lambda \in W\cdot \lambda$ if and only if $\sigma_*\lambda \in W\cdot \lambda$. Now, if $B\le G$ is a Borel subgroup containing $H$ such that $\lambda$ is dominant with respect to $B$, there exists a quasi-split real structure $\sigma^{qs}$ inner to $\sigma$ which preserves both $H$ and $B$ (see \eg\ \cite[Problem VI.31]{KnappLG}). Moreover, one can check that there exist a split real structure $\sigma^s$ preserving $H$ and $B$ and a pinning-preserving holomorphic involution $\eta$ such that $\sigma^{qs} = \eta\sigma^s$. We then have $\sigma^s_*\lambda = \lambda$ (irrespective of $\sigma_*\lambda$), so altogether 
    $$
        \sigma_*\lambda \in W\cdot \lambda
        \quad \Leftrightarrow \quad
        \sigma^{qs}_*\lambda = \lambda
        \quad \Leftrightarrow \quad
        \eta\circ \lambda = \lambda.
    $$
\end{enumerate}
\end{rmk}

\begin{prop}
\label{prop:underline_sigma}
    Let $\sigma$ be a real structure of $G$ which preserves the Cartan subgroup $H$ as well as the Weyl group orbit of a cocharacter $\lambda\in \Hom(\C^\times, H)$. Choose $s\in N_G(H)$ such that $\sigma_*\lambda = \Conj_s\circ \lambda$. Then
    $$
        \underline{\sigma}(gP_\lambda)
        \coloneqq
        \sigma(g)sP_\lambda,
        \quad
        g\in G,
    $$
    defines an antiholomorphic involution $\underline{\sigma}$ of $G/P_\lambda$ which does not depend on the choice of $s$.
\end{prop}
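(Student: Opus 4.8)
The plan is to check in turn that $\underline{\sigma}$ is well-defined on cosets, antiholomorphic, squares to the identity, and independent of the choice of $s$. The useful preliminary is that for any $s\in N_G(H)$ one has $P_{\Conj_s\circ\lambda} = sP_\lambda s\inv$: indeed $d(\Conj_s\circ\lambda)(1) = \Ad(s)\big(d\lambda(1)\big)$, and since $\ad(\Ad(s)v) = \Ad(s)\,\ad(v)\,\Ad(s)\inv$ the non-negative eigenspaces of $\ad(\Ad(s)v)$ are precisely the $\Ad(s)$-images of those of $\ad(v)$; hence $\p_{\Ad(s)v} = \Ad(s)\p_v$ and $P_{\Conj_s\circ\lambda} = N_G\big(\Ad(s)\p_{d\lambda(1)}\big) = sN_G\big(\p_{d\lambda(1)}\big)s\inv = sP_\lambda s\inv$. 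Combined with the already-noted identity $\sigma(P_\lambda) = P_{\sigma_*\lambda}$ and the hypothesis $\sigma_*\lambda = \Conj_s\circ\lambda$, this yields $\sigma(P_\lambda) = sP_\lambda s\inv$.

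For well-definedness: if $gP_\lambda = g'P_\lambda$ then $g\inv g'\in P_\lambda$, so $\sigma(g\inv g')\in\sigma(P_\lambda) = sP_\lambda s\inv$, whence $s\inv\sigma(g)\inv\sigma(g')s\in P_\lambda$ and therefore $\sigma(g)sP_\lambda = \sigma(g')sP_\lambda$. For antiholomorphicity, note that $g\mapsto \sigma(g)sP_\lambda$ is the composite of the antiholomorphic $\sigma$, the (holomorphic) right translation by $s$, and the (holomorphic) quotient map $G\to G/P_\lambda$, hence antiholomorphic; since it is constant on the fibres of the holomorphic submersion $G\to G/P_\lambda$, the induced map $\underline{\sigma}$ is antiholomorphic.

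The one substantive step is that $\underline{\sigma}$ is an involution. Computing, $\underline{\sigma}^2(gP_\lambda) = \sigma\big(\sigma(g)s\big)sP_\lambda = \sigma^2(g)\,\sigma(s)s\,P_\lambda = g\,\sigma(s)s\,P_\lambda$, so it is enough to show $\sigma(s)s\in P_\lambda$; I will show in fact that $\sigma(s)s$ centralises $\lambda$. Apply $\sigma_*$ to the defining relation $\sigma_*\lambda = \Conj_s\circ\lambda$: the left-hand side is $\sigma_*^2\lambda = \lambda$, while the right-hand side, using the cocycle-type identity $\sigma\circ\Conj_s = \Conj_{\sigma(s)}\circ\sigma$, equals $\Conj_{\sigma(s)}\circ\sigma_*\lambda = \Conj_{\sigma(s)}\circ\Conj_s\circ\lambda = \Conj_{\sigma(s)s}\circ\lambda$. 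Thus $\Conj_{\sigma(s)s}\circ\lambda = \lambda$; differentiating gives $\Ad(\sigma(s)s)\big(d\lambda(1)\big) = d\lambda(1)$, so $\Ad(\sigma(s)s)$ commutes with $\ad(d\lambda(1))$, preserves its eigenspaces, and hence normalises the parabolic subalgebra defining $P_\lambda$. Therefore $\sigma(s)s\in P_\lambda$, as required.

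Finally, for independence of $s$: if $s,s'\in N_G(H)$ both satisfy $\Conj_s\circ\lambda = \sigma_*\lambda = \Conj_{s'}\circ\lambda$, then $\Conj_{s'\inv s}\circ\lambda = \lambda$, so $s'\inv s$ centralises $d\lambda(1)$ and lies in $P_\lambda$ by the argument just given; hence $\sigma(g)sP_\lambda = \sigma(g)s'P_\lambda$ for all $g$. The only step needing any care is the involution computation, which rests on the identities $\sigma\circ\Conj_s = \Conj_{\sigma(s)}\circ\sigma$ and $\sigma_*^2 = \id$; the rest is routine unwinding of the definitions of $P_\lambda$ and $\sigma_*$.
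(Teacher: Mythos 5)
Your proof is correct and follows essentially the same route as the paper: well-definedness via $\sigma(P_\lambda)=sP_\lambda s\inv$, the involution property via $\Conj_{\sigma(s)s}\circ\lambda=\lambda$ forcing $\sigma(s)s\in P_\lambda$, and independence of $s$ by the same mechanism. The only cosmetic difference is that where the paper invokes self-normalisation of $P_\lambda$, you argue directly from the definition $P_\lambda = N_G(\p_{d\lambda(1)})$ by showing the relevant elements fix $d\lambda(1)$ under $\Ad$ and hence normalise $\p_{d\lambda(1)}$, which is an equally valid justification of the same step.
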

\begin{proof}
    Firstly, for $g\in G$ and $h\in P_\lambda$ we have
    $$
        \sigma(gh)sP_\lambda
        =
        \sigma(g)\sigma(h)sP_\lambda
        =
        \sigma(g)s \Conj_{s\inv}(\sigma(h)) P_\lambda
        =
        \sigma(g)s P_\lambda
    $$
    because $\Conj_s(P_\lambda) = \sigma(P_\lambda)$, so $\underline{\sigma}(gP_\lambda)$ is well-defined. 
    
    Secondly, for another $s'\in N_G(H)$ with $\Conj_{s'}\circ \lambda = \sigma_*\lambda$, we have
    $$
        s'P_\lambda
        =
        s(s\inv s')P_\lambda
        =
        sP_\lambda
    $$
    since $\Conj_{s\inv s'}(P_\lambda) = P_\lambda$ and $P_\lambda$ is self-normalising (forcing $s\inv s'\in P_\lambda)$. This shows that $\underline{\sigma}$ does not depend on the choice of $s$.

    Lastly, the assumption $\Conj_s \circ \lambda = \sigma_*\lambda$ implies $\Conj_{\sigma(s)s}\circ \lambda = \lambda$. As before, we conclude that $\sigma(s)s\in P_\lambda$. Then
    $$
        \underline{\sigma}^2(gP_\lambda)
        =
        g\sigma(s)sP_\lambda
        =
        gP_\lambda,
        \quad
        g\in G
    $$
    verifies that $\underline{\sigma}$ is indeed an involution. Antiholomorphicity is easily checked by lifting $\underline{\sigma}$ to $G$.
\end{proof}
\begin{rmk}
\label{rmk:real_str_on_affGr}
    The real structure $\sigma$ induces a real structure $\widetilde{\sigma}$ on the affine Grassmannian $\Gr = \Gr_G$ of $G$. The affine Schubert varieties of $\Gr$ can be labelled by $W$-orbits of coweights, and $\widetilde{\sigma}$ permutes the affine Schubert varieties as $\sigma_*$ permutes these $W$-orbits. In particular, $\widetilde{\sigma}$ preserves the affine Schubert variety $\Gr^{\le \lambda}$ if and only if $\sigma_*\lambda \in W\cdot \lambda$. If this is the case, $\widetilde{\sigma}$ restricts to a real structure of $\Gr^{\le \lambda}$. Now, if $\lambda$ is in addition minuscule, then $\Gr^{\le \lambda}$ is canonically isomorphic to $G/P_\lambda$, and the isomorphism identifies $\widetilde{\sigma}$ with the real structure $\underline{\sigma}$ of Proposition \ref{prop:underline_sigma}.
\end{rmk}

As an antiholomorphic involution, $\underline{\sigma}$ fixes a real submanifold of $G/P_\lambda$ whose real dimension is $\dim_\C G/P_\lambda$. However, it turns out that this submanifold can be empty:
\begin{prop}
Let $G$, $\sigma$, and $\lambda$ be as in Proposition \ref{prop:underline_sigma}. Then
    $$
        (G/P_\lambda)^{\underline{\sigma}}
        =
        \begin{cases}
            G^\sigma/P_\lambda^\sigma
                & \text{if } \sigma_*\lambda = \lambda \\
            \emptyset
                & \text{otherwise.}
        \end{cases}
    $$
\end{prop}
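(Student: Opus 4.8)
The plan is to translate the fixed-point problem into a statement about $\sigma$-stable parabolic subgroups and then treat the two cases of the dichotomy separately. The orbit map $g\mapsto \Conj_g(P_\lambda)=gP_\lambda g\inv$ identifies $G/P_\lambda$ with the set of parabolic subgroups of $G$ conjugate to $P_\lambda$, and under this identification $\underline\sigma$ becomes $Q\mapsto\sigma(Q)$: since $sP_\lambda s\inv=P_{\Conj_s\circ\lambda}=P_{\sigma_*\lambda}=\sigma(P_\lambda)$, the coset $\underline\sigma(gP_\lambda)=\sigma(g)sP_\lambda$ corresponds to $\sigma(g)\,\sigma(P_\lambda)\,\sigma(g)\inv=\sigma(gP_\lambda g\inv)$. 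Thus $(G/P_\lambda)^{\underline\sigma}$ is exactly the set of $\sigma$-stable parabolic subgroups of $G$ conjugate to $P_\lambda$; it remains to identify when such a parabolic exists and, when it does, to describe the $G^\sigma$-action on the set of them.

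For $\sigma_*\lambda=\lambda$ we may take $s=e$, so $\sigma(P_\lambda)=P_\lambda$ and $\underline\sigma(gP_\lambda)=\sigma(g)P_\lambda$; in particular $P_\lambda$ is $\sigma$-stable, \ie\ a parabolic subgroup defined over $\R$ for the real form determined by $\sigma$. The containment $G^\sigma/P_\lambda^\sigma\subseteq(G/P_\lambda)^{\underline\sigma}$ is clear, and the induced map is injective because $gP_\lambda=g'P_\lambda$ with $g,g'\in G^\sigma$ forces $g\inv g'\in P_\lambda\cap G^\sigma=P_\lambda^\sigma$. For the reverse containment, a fixed point $gP_\lambda$ yields $p:=g\inv\sigma(g)\in P_\lambda$ with $\sigma(p)=p\inv$, and solving $\sigma(q)=p\inv q$ for some $q\in P_\lambda$ would give $gq\in G^\sigma$ representing the same coset. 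The existence of such $q$ is the triviality of $\ker\!\big(H^1(\langle\sigma\rangle,P_\lambda)\to H^1(\langle\sigma\rangle,G)\big)$, equivalently the classical fact that $G^\sigma$ acts transitively on the real points of the projective homogeneous variety $G/P_\lambda$; this follows from Borel--Tits (using the Levi decomposition of $P_\lambda$, the vanishing of $H^1(\langle\sigma\rangle,-)$ on unipotent groups, and the $G^\sigma$-conjugacy of minimal parabolic $\R$-subgroups). Hence $(G/P_\lambda)^{\underline\sigma}=G^\sigma\cdot P_\lambda\cong G^\sigma/P_\lambda^\sigma$.

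For $\sigma_*\lambda\neq\lambda$ the goal is to show that no $\sigma$-stable parabolic $Q$ conjugate to $P_\lambda$ can exist. Given such a $Q$, it contains a $\sigma$-stable maximal torus $T'$ (being a parabolic $\R$-subgroup), and, since every parabolic containing a maximal torus has the form $P_\mu$, we may write $Q=P_\mu$ with $\mu$ a cocharacter of $T'$ lying, after identifying root data, in the Weyl orbit of $\lambda$. Here minusculity enters: the assignment $wW_\lambda\mapsto P_{w\lambda}$ is a bijection from $W/W_\lambda$ onto the parabolics of the type of $P_\lambda$ containing $T'$, so $\sigma$-stability, \ie\ $P_{\sigma_*\mu}=\sigma(P_\mu)=P_\mu$, forces $\sigma_*\mu=\mu$. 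It then remains to transport this ``$\sigma$-rationality'' of $\mu$ on $T'$ back to the standing torus $H$ and to conclude $\sigma_*\lambda=\lambda$, contradicting the hypothesis.

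I expect this last transport step to be the main obstacle, and it is where one must use the setup carefully: $\sigma$ acts on the cocharacter lattice of a $\sigma$-stable maximal torus by an element of $W\rtimes\langle\eta\rangle$ whose image in $\langle\eta\rangle$ depends only on the inner class of $\sigma$ (with $\eta$ the attached diagram automorphism), while the dominant representative of $\sigma_*\lambda$ is governed by $\eta$ together with the equivalences recorded in Remark~\ref{rmk:notions_of_preserving}. Comparing the torus $T'$ supplied by $Q$ with $H$ -- keeping track of which torus, which Weyl chamber, and which representative of the inner class one is working with -- should show that the existence of a single $\sigma$-stable torus carrying a $\sigma$-fixed Weyl translate of $\lambda$ already forces $\sigma_*\lambda=\lambda$. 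Making this bookkeeping precise is the part of the argument that requires genuine care.
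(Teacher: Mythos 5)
Your opening identification is right: under $gP_\lambda\mapsto gP_\lambda g\inv$ the involution $\underline{\sigma}$ becomes $Q\mapsto\sigma(Q)$, so $(G/P_\lambda)^{\underline{\sigma}}$ is the set of $\sigma$-stable parabolic subgroups conjugate to $P_\lambda$. Your treatment of the case $\sigma_*\lambda=\lambda$ is correct and complete modulo the classical Borel--Tits fact you invoke, and it is a genuinely different route from the paper: the paper uses no Galois cohomology, but rather Wolf's structure theory of real-form orbits on flag manifolds --- the fixed locus is a closed union of $G^\sigma$-orbits, hence if nonempty contains the unique closed (``distinguished'') orbit, asserted to pass through $eP_\lambda$, and a dimension count ($\dim_\R$ of the fixed locus equals $\dim_\C G/P_\lambda$, the minimal orbit dimension, attained only by that orbit) pins the fixed locus down to exactly that orbit.

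The genuine gap is the case $\sigma_*\lambda\neq\lambda$, and the ``transport'' step you flag is not mere bookkeeping: the implication you need --- that existence of a $\sigma$-stable parabolic conjugate to $P_\lambda$ forces $\sigma_*\lambda=\lambda$ --- fails under the hypotheses of Proposition \ref{prop:underline_sigma} as stated. The existence of a $\sigma$-stable member of the class depends only on $\sigma$ and the conjugacy class of $P_\lambda$, whereas $\sigma_*\lambda=\lambda$ depends on how the $\sigma$-stable Cartan $H$ carrying $\lambda$ sits inside the real form. Concretely, take $G=\SL_2(\C)$ with $\sigma(g)=\bar g$, and let $H$ be the complexification of $\mathrm{SO}(2)\le\SL_2(\R)$: this Cartan subgroup is $\sigma$-stable and anisotropic, so $\sigma_*\lambda=-\lambda\neq\lambda$ for a generator $\lambda$ of $\Hom(\C^\times,H)$, and the hypotheses hold; yet $\sigma$-stable Borels certainly exist, and under $gP_\lambda\mapsto g\cdot[1:-i]$ the map $\underline{\sigma}$ becomes ordinary complex conjugation on $\mathbb{P}^1$, with fixed locus $\R\mathbb{P}^1\cong\SL_2(\R)/B(\R)\neq\emptyset$ (and not of the form $G^\sigma/P_\lambda^\sigma$ either). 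So no amount of care with the cocycle $k\inv\sigma(k)$ will complete your step three: what your method actually proves is that the fixed locus is $G^\sigma/Q^\sigma$ for a $\sigma$-stable $Q$ in the class of $P_\lambda$ when such a $Q$ exists, and empty otherwise. To convert this into the stated dichotomy in terms of $\sigma_*\lambda=\lambda$ one needs an extra positioning hypothesis (e.g.\ $\lambda$ dominant with respect to a Borel adapted to a maximally split $\sigma$-stable Cartan, as in the paper's applications), which guarantees that nonemptiness forces the base point $eP_\lambda$ itself to be fixed. Note that this is also exactly the point carrying the weight in the paper's own argument, namely the assertion that the distinguished orbit passes through $eP_\lambda$; in the $\SL_2$ configuration above the base point lies in an open orbit, so your difficulty with the transport step is a faithful reflection of a real constraint, not a failure of bookkeeping.
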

\begin{proof}
    The key observation is that $(G/P_\lambda)^{\underline{\sigma}}$ is a closed union of $G^\sigma$-orbits in $G/P_\lambda$. Thus, it is either empty or contains the distinguished orbit through $eP_\lambda$ (where $e\in G$ is the neutral element) \cite[Cor.\,3.4]{Wolf1969_real_action}. But $\underline{\sigma}(eP_\lambda) = sP_\lambda$ for $s\in N_G(H)$ with $\Conj_s\circ \lambda = \sigma_*\lambda$. This equals $eP_\lambda$ precisely when $s\in P_\lambda$, which is equivalent to $\sigma_*\lambda = \lambda$. 
    
    In case $\sigma_*\lambda = \lambda$, we have seen that the fixed submanifold contains the distinguished orbit $G^\sigma\cdot (eP_\lambda) = G^\sigma/P_\lambda^\sigma$. But $\dim_\R(G/P_\lambda)^{\underline{\sigma}} = \dim_\C (G/P_\lambda)$ is the minimal dimension of $G^\sigma$-orbits \cite[Thm.\,3.6]{Wolf1969_real_action}, which is attained only by the distinguished orbit \cite[Cor.\,3.4]{Wolf1969_real_action}, so the fixed point set consists of that orbit alone.
\end{proof}

We will be interested in the action of $\underline{\sigma}$ on equivariant cohomology. More precisely, the pair $(\sigma, \underline{\sigma})$ induces an algebra involution
\begin{equation}
\label{eq:sigma_on_eq_coh}
\begin{tikzcd}
    H^{2*}_G(G/P_\lambda)
        \arrow[r, "\sigma^*"]
    & H^{2*}_G(G/P_\lambda)
    \\
    H^{2*}_G
        \arrow[u]
        \arrow[r, "\sigma^*"]
    & H^{2*}_G
        \arrow[u]
\end{tikzcd} 
\end{equation}
as in \eqref{eq:eq_coh_functor_condition}--\eqref{eq:eq_coh_functor_result}. We finish this section with a description of $\sigma^*$.
\begin{lem}
\label{lem:inner_class_only}
    Let $G$, $\sigma$, and $\lambda$ be as in Proposition \ref{prop:underline_sigma}. The involution $\sigma^*$ of \eqref{eq:sigma_on_eq_coh} depends only on the inner class of $\sigma$.\footnote{This lemma is a special case of Remark A.7(i) in \cite{Zhu2012_geometric}.}
\end{lem}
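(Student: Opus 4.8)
The plan is to reduce the statement to two facts about the $G$-space $G/P_\lambda$. The first is that for any $b\in G$, the pair consisting of $\Conj_b\colon G\to G$ and the left translation $L_b\colon G/P_\lambda\to G/P_\lambda$, $gP_\lambda\mapsto bgP_\lambda$, satisfies the compatibility condition \eqref{eq:eq_coh_functor_condition} and induces the \emph{identity} on the diagram \eqref{eq:eq_coh_functor_result} with $X=Y=G/P_\lambda$. I would prove this by writing down the induced self-map of the Borel construction $(G/P_\lambda)_G=(\mathbb{E}G\times G/P_\lambda)/G$ explicitly: using the $\Conj_b$-equivariant map $e\mapsto eb\inv$ on $\mathbb{E}G$, it is $[(e,x)]\mapsto[(eb\inv,bx)]$, which equals $[(e,x)]$ by the defining relation of the Borel construction; the same computation with $L_b=\id$ handles $H^*_G$. (This is standard; \cf\ also the reference in the footnote to the statement.) The second fact is that the only $G$-equivariant self-map of $G/P_\lambda$ is the identity: such a map sends $eP_\lambda$ to some $aP_\lambda$ with $a\inv P_\lambda a\subseteq P_\lambda$, forcing $a\in N_G(P_\lambda)=P_\lambda$ since parabolic subgroups are self-normalising.

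Granting these, let $\sigma_1,\sigma_2$ be real structures in the same inner class, both as in Proposition \ref{prop:underline_sigma} (possibly with respect to different Cartan subgroups), so that $\underline{\sigma_1},\underline{\sigma_2}$ and hence the involutions $\sigma_1^*,\sigma_2^*$ of \eqref{eq:sigma_on_eq_coh} are defined. Since $\sigma_2$ is inner to $\sigma_1$, the holomorphic automorphism $\sigma_2\sigma_1=\sigma_2\sigma_1\inv$ lies in $\Int(G)$, so $\sigma_2=\Conj_b\circ\sigma_1$ for some $b\in G$. Then $(\sigma_2,L_b\circ\underline{\sigma_1})=(\Conj_b,L_b)\circ(\sigma_1,\underline{\sigma_1})$, so by the first fact and functoriality this pair induces the same map on $H^*_G(G/P_\lambda)$ as $(\sigma_1,\underline{\sigma_1})$, namely $\sigma_1^*$.

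Finally I would identify $L_b\circ\underline{\sigma_1}$ with $\underline{\sigma_2}$. Both are $\sigma_2$-equivariant self-maps of $G/P_\lambda$ --- the former as the second component of the pair above, the latter by Proposition \ref{prop:underline_sigma} --- so $\underline{\sigma_2}\inv\circ L_b\circ\underline{\sigma_1}$ is a $G$-equivariant self-map of $G/P_\lambda$, hence the identity by the second fact. Thus $\underline{\sigma_2}=L_b\circ\underline{\sigma_1}$, and therefore $\sigma_2^*=(L_b\circ\underline{\sigma_1})^*=\sigma_1^*$. The only delicate point is getting the direction conventions right in the first fact (contravariance of pullback, and the precise $G$-action on the Borel construction); once that is pinned down, the argument is purely formal.
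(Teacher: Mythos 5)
Your proposal is correct and follows essentially the same route as the paper: both identify $\underline{\sigma_2}$ with $\ell_b\circ\underline{\sigma_1}$ and then argue that the pair $(\Conj_b,\ell_b)$ acts trivially on $G$-equivariant cohomology. The paper establishes the triviality by noting that left translation is homotopic to the identity since $G$ is connected, whereas you verify it on the nose in the Borel construction (and pin down $\underline{\sigma_2}=\ell_b\circ\underline{\sigma_1}$ via self-normalisation of $P_\lambda$ rather than by direct computation) --- a slightly more detailed, but not substantively different, argument.
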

\begin{proof}
    If $\psi = \Conj_u\in \Int(G)$ is such that $\sigma'=\psi\sigma$ is another real structure, one finds that $\underline{\sigma}$ and $\underline{\sigma'}$ differ by the action $\ell_u$ of $u$ on $G/P_\lambda$ by left multiplication. Since $G$ is connected, this action is homotopic to the identity. It follows that $(\sigma')^* = \sigma^*$.
\end{proof}

Using \eqref{eq:eq_coh_of_hmg_sp}, we find that $H^{2*}_G(G/P_\lambda) \cong H^{2*}_{P_\lambda}$. Moreover, $P_\lambda$ has a canonical Levi subgroup $L_\lambda$ whose Lie algebra $\el_\lambda$ is the centraliser of $d\lambda(1)$ in $\g$. By the discussion after \eqref{eq:eq_coh_functor_result}, we then further have
$
    H^{2*}_G(G/P_\lambda) \cong H^{2*}_{L_\lambda}.
$
Now let $\tau$ be a compact real structure of $G$ which preserves $H$ and commutes with $\sigma$ (which exists \eg\ by \cite[Prop.\,II.6]{Onishchik2004_real}). Then $L_\lambda^\tau$ is a compact real form and hence a maximal compact subgroup of $L_\lambda$, so $H^{2*}_{L_\lambda} \cong H^{2*}_{L_\lambda^\tau}$. At this point, we can apply the isomorphism \eqref{eq:Chern-Weil} to conclude
\begin{equation}
\label{eq:long_iso_Levi_level}
    H^{2*}_G(G/P_\lambda) 
    \cong H^{2*}_{P_\lambda}
    \cong H^{2*}_{L_\lambda}
    \cong H^{2*}_{L_\lambda^\tau}
    \cong S(\el_\lambda^\tau)^{L_\lambda^\tau} \otimes_\R \C
    \cong S(\el_\lambda)^{L_\lambda}
\end{equation}
(where $\tau$ also denotes the real structure on $\g$ obtained from $\tau$ by differentiation). Similarly, we have
\begin{equation}
\label{eq:long_iso_base_level}
    H^{2*}_G
    \cong H^{2*}_{G^\tau}
    \cong S(\g^\tau)^{G^\tau} \otimes_\R \C
    \cong S(\g)^G.
\end{equation}
Lastly, via the Killing form on $\g$ we obtain equivariant isomorphisms $\g\cong \g^*$ and $\el_\lambda \cong \el_\lambda^*$. We interpret the ring $S(\g^*)^G$ as the \emph{$G$-invariant polynomials} on $\g$ and thus denote it as $\C[\g]^G$ (and extend this notation to other Lie algebras and Lie groups). Then:
\begin{equation}
\label{eq:killing_ring_isos}
    S(\g)^G \cong \C[\g]^G,
    \quad
    S(\el_\lambda)^{L_\lambda} \cong \C[\el_\lambda]^{L_\lambda}
\end{equation}
\begin{lem}
\label{lem:from_eq_coh_to_invrings}
    Let $G$, $\sigma$, and $\lambda$ be as in Proposition \ref{prop:underline_sigma} and assume that $\sigma_*\lambda = \lambda$. Under the isomorphisms \eqref{eq:long_iso_Levi_level}, \eqref{eq:long_iso_base_level} and \eqref{eq:killing_ring_isos}, the involution $\sigma^*$ of \eqref{eq:sigma_on_eq_coh} is identified with
    $$
    \begin{tikzcd}
        \C[\el_\lambda]^{L_\lambda}
            \arrow[r, "\theta^*"]
        & \C[\el_\lambda]^{L_\lambda} \\
        \C[\g]^G
            \arrow[u, "\res"]
            \arrow[r, "\theta^*"]
        & \C[\g]^G,
        \arrow[u, "\res"]
    \end{tikzcd}
    $$
    where $\theta$ is the Cartan involution as in Definition \ref{def:Cartan_involution}, $\theta^*$ denotes precomposition of polynomials with $\theta$ and $\res$ their restriction to $\el_\lambda$.  
\end{lem}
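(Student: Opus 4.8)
The plan is to replace the antiholomorphic $\sigma$ by the holomorphic Cartan involution $\theta = \sigma\tau = \tau\sigma$, which is visible to the polynomial models, using the factorisation $\sigma = \theta\tau$ together with the trivial but decisive observation that $\tau$ restricts to the identity on its own fixed-point subgroups. First I would collect three preliminaries. (a) Since $\sigma_*\lambda = \lambda$, differentiating $\sigma_*\lambda = \sigma\circ\lambda\circ\overline{(\cdot)}$ at $1$ -- where the conjugation on $\C^\times$ cancels the conjugate-linearity of $d\sigma$ -- gives $\sigma(d\lambda(1)) = d\lambda(1)$, whence $\sigma$ preserves $P_\lambda$, its canonical Levi $L_\lambda$, and $\el_\lambda$; in particular one may take the twisting element $s = e$ in Proposition \ref{prop:underline_sigma}, so that $\underline{\sigma}$ is simply the self-map of $G/P_\lambda$ induced by the group automorphism $\sigma$. (b) By the discussion preceding the lemma, $\tau$ preserves $G^\tau$, $L_\lambda$ and $L_\lambda^\tau$, and hence so does $\theta = \sigma\tau$. (c) Because $\tau$ is literally the identity on $G^\tau \supseteq L_\lambda^\tau$, we get $\sigma|_{G^\tau} = \theta|_{G^\tau}$ and $\sigma|_{L_\lambda^\tau} = \theta|_{L_\lambda^\tau}$.

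Next I would chase $\sigma^*$ through the chains \eqref{eq:long_iso_base_level} and \eqref{eq:long_iso_Levi_level}, checking naturality step by step. For the base: the square of group homomorphisms $G^\tau\hookrightarrow G$ against itself, with vertical maps $\sigma|_{G^\tau} = \theta|_{G^\tau}$ and $\sigma$, shows (after applying the classifying-space functor and cohomology) that $\sigma^*$ on $H^*_G \cong H^*_{G^\tau}$ is the action of $\theta|_{G^\tau}$; the Chern-Weil isomorphism \eqref{eq:Chern-Weil}, being functorial in the compact group, turns this into precomposition with $\theta|_{\g^\tau}$, complexification into precomposition with $\theta|_\g$ on $S(\g)^G$, and the Killing identification $\g\cong\g^*$ -- which is $\theta$-equivariant because $\theta$ preserves the Killing form and $\theta^2 = \id$ -- into $\theta^*$ on $\C[\g]^G$. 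For the total space: because $s = e$, the isomorphism $H^*_G(G/P_\lambda)\cong H^*_{P_\lambda}$ of \eqref{eq:eq_coh_of_hmg_sp} carries $\sigma^*$ to $(\sigma|_{P_\lambda})^*$; the homotopy equivalences $L_\lambda\hookrightarrow P_\lambda$ and $L_\lambda^\tau\hookrightarrow L_\lambda$, each compatible with the restrictions of $\sigma$, then carry this to $(\sigma|_{L_\lambda^\tau})^* = (\theta|_{L_\lambda^\tau})^*$; and Chern-Weil, complexification and the restriction to $\el_\lambda$ of the Killing form (still $\theta$-equivariant) turn it into $\theta^*$ on $\C[\el_\lambda]^{L_\lambda}$, exactly as in the base case.

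It then remains to identify the left vertical arrow with $\res$ and to note commutativity of the square. That arrow is the equivariant structure map $H^*_G\to H^*_G(G/P_\lambda)$, induced by $G/P_\lambda\to\pt$ together with $\id_G$; under $(G/P_\lambda)_G\simeq\mathbb{B}P_\lambda$ it is the map induced by the inclusion $P_\lambda\hookrightarrow G$, which after passing to Levi and compact levels is the map induced by $L_\lambda^\tau\hookrightarrow G^\tau$, hence -- by functoriality of Chern-Weil once more -- the restriction of invariant polynomials $\C[\g]^G\to\C[\el_\lambda]^{L_\lambda}$. Commutativity is then automatic: $\theta$ preserves $\el_\lambda$, so restricting a polynomial to $\el_\lambda$ commutes with precomposing it by $\theta$.

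I expect the only genuinely delicate point to be the first move of the second paragraph -- that when $\sigma_*\lambda = \lambda$ the twisting element $s$ may be taken trivial, so that $\underline{\sigma}$ descends from an honest group automorphism and every arrow in \eqref{eq:long_iso_base_level} and \eqref{eq:long_iso_Levi_level} becomes manifestly natural -- together with the identity $\sigma|_{G^\tau} = \theta|_{G^\tau}$ and its Levi analogue. Once these are in hand, the remainder is a somewhat lengthy but routine diagram chase through standard functorialities; all the non-naturality that the antiholomorphicity of $\sigma$ might a priori cause has been absorbed into the holomorphic involution $\theta$.
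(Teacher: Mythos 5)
Your proposal is correct and follows essentially the same route as the paper: the paper's proof consists precisely of observing that $\sigma_*\lambda=\lambda$ lets one take $s=e$, so $\underline{\sigma}(gP_\lambda)=\sigma(g)P_\lambda$, and then tracing $\sigma^*$ through \eqref{eq:long_iso_Levi_level}--\eqref{eq:killing_ring_isos}, which is exactly your diagram chase. Your write-up simply makes explicit the details the paper leaves implicit (notably that $\sigma$ agrees with $\theta$ on $G^\tau$ and $L_\lambda^\tau$, and the functoriality of Chern--Weil and the Killing identification), all of which check out.
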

\begin{proof}
    Under our assumption, the map $\underline{\sigma}$ has the simple form
    $$
        \underline{\sigma}(gP_\lambda) = \sigma(g)P_\lambda,
        \quad
        g\in G.
    $$
    It is then not difficult to trace this through each of the steps in \eqref{eq:long_iso_Levi_level}--\eqref{eq:killing_ring_isos} to arrive at the claimed description.
\end{proof}

The description in Lemma \ref{lem:from_eq_coh_to_invrings} will be used for the proof of Theorem \ref{thm:main}. However, translating this further through the isomorphism $\C[\el_\lambda]^{L_\lambda} \cong \Kir^\lambda(\g) $, the involutions $\sigma^*$ turn out to fit into a more general class of finite order automorphisms of $\Kir^\lambda(\g)$, which we now introduce.\footnote{We note that their construction, in Proposition \ref{prop:iota_eta_zeta} below, closely resembles that in \cite[Thm.\,6.3]{GPRa2019_involutions}.} The main idea is that two natural sources of automorphisms are (suitable) automorphisms of $\g$ and ``rescaling the argument'' on $S(\g) \cong \C[\g]$. 
\begin{prop}
\label{prop:iota_eta_zeta}
    Let $\g$ be a complex semisimple Lie algebra, $\h$ a Cartan subalgebra, and $\lambda$ be a dominant integral weight (with respect to some choice of positivity) with corresponding irreducible representation $\rho^\lambda \colon \g\to \End(V^\lambda)$. Suppose given
    \begin{itemize}
        \item an automorphism $\eta\in \Aut(\g)$ such that $\eta(\h)=\h$, and
        \item a root of unity $\zeta\in \C^\times$.\footnote{The importance of this parameter was explained to us by Miguel González.}
    \end{itemize}
    \begin{itemize}
        \item[(i)] The map $\rho^\lambda\circ \eta$ defines another irreducible representation on $V^\lambda$. Its highest weight, which we denote $\eta^*\lambda$, is conjugate to $\lambda\circ \eta$ through the Weyl group.
        \item[(ii)] Up to rescaling, there is a unique linear isomorphism $A = A_{\eta,\lambda} \colon V^\lambda \to V^{\eta^*\lambda}$ such that
        $$
            A(x\cdot v) = \eta(x) \cdot A(v)
            \quad
            \forall x\in \g, v\in V^\lambda.
        $$
        \item[(iii)] If $\eta^*\lambda = \lambda$, then 
        $$
            \widetilde{\iota_{\eta, \zeta}}
            \coloneqq S (\zeta\eta) \otimes \Conj_{A}
            \colon\,\,
            S(\g) \otimes \End(V^\lambda) \to S(\g) \otimes \End(V^\lambda)
        $$
        restricts to an automorphism $\iota_{\eta,\zeta}$ of $\Kir^\lambda(\g)$. (Here $S(\zeta\eta)$ denotes the automorphism of $S(\g)$ induced by the linear isomorphism $x\mapsto \zeta\cdot\eta(x)$, and $\Conj_A$ is conjugation by $A=A_{\eta,\lambda}$.)
        \item[(iv)] Let $\psi \in \Int(\g)$ normalise $\h$ and put $\eta'\coloneqq \eta\psi$. Then $\eta^*\lambda = (\eta')^*\lambda$, and if this equals $\lambda$ we have $\iota_{\eta',\zeta} = \iota_{\eta, \zeta}$.
        \item[(v)] If $\eta\in \Int(\g)$, then $\eta^*\lambda = \lambda$ for all dominant integral weights $\lambda$, and $\iota_{\eta,\zeta} = \iota_{\id,\zeta} = \zeta^{\deg}$. That is, $\iota_{\eta,\zeta}$ acts on homogeneous elements of degree $k$ by multiplication with $\zeta^k$.
    \end{itemize}
\end{prop}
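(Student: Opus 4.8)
The plan is to establish the five parts in the order (i), (ii), (iii), (v), (iv), deducing (iv) from (v). Parts (i) and (ii) are essentially weight-theoretic bookkeeping. For (i): precomposition with $\eta$ sends $\g$-subrepresentations of $V^\lambda$ to $\g$-subrepresentations (pull back along $\eta\inv$), so $\rho^\lambda\circ\eta$ is again irreducible; a vector of $\h$-weight $\mu$ for $\rho^\lambda$ has $\h$-weight $\mu\circ\eta|_\h$ for $\rho^\lambda\circ\eta$, so the weight set of the latter is the image of $\wt(\lambda)$ under the linear automorphism $\mu\mapsto\mu\circ\eta|_\h$ of $\h^*$. Since $\eta$ permutes root spaces this automorphism normalises the root system, hence factors as a diagram automorphism $d$ composed with a Weyl element; using that $\wt$ of any irreducible is $W$-stable, the new weight set equals $\wt(d\lambda)$, whose unique dominant element is $\eta^*\lambda=d\lambda$, manifestly $W$-conjugate to $\lambda\circ\eta$. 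For (ii): by (i) the representation $\rho^\lambda\circ\eta$ on $V^\lambda$ is irreducible of highest weight $\eta^*\lambda$, hence isomorphic to $V^{\eta^*\lambda}$, and a $\g$-isomorphism $V^\lambda\to V^{\eta^*\lambda}$ is precisely a linear $A$ with $A(x\cdot v)=\eta(x)\cdot A(v)$; Schur's lemma gives uniqueness up to a scalar.

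For (iii), note first that $\widetilde{\iota_{\eta,\zeta}}=S(\zeta\eta)\otimes\Conj_A$ is an algebra automorphism of $S(\g)\otimes\End(V^\lambda)$, since $S(\zeta\eta)$ is the polynomial-algebra automorphism attached to the linear automorphism $x\mapsto\zeta\eta(x)$ of $\g$ (the Lie bracket plays no role here) and $\Conj_A$ is an algebra automorphism of $\End(V^\lambda)$. The crucial point is that $\widetilde{\iota_{\eta,\zeta}}$ intertwines the diagonal $\g$-action with its $\eta$-twist: on $\g$ one has $(\zeta\eta)\circ\ad(x)=\ad(\eta(x))\circ(\zeta\eta)$, so $S(\zeta\eta)$ carries the derivation $\ad(x)$ of $S(\g)$ to the derivation $\ad(\eta(x))$; and rewriting (ii) as $A\rho^\lambda(x)A\inv=\rho^\lambda(\eta(x))$ gives $\Conj_A\circ[\rho^\lambda(x),-]=[\rho^\lambda(\eta(x)),-]\circ\Conj_A$. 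Combining these, $\widetilde{\iota_{\eta,\zeta}}(x\cdot w)=\eta(x)\cdot\widetilde{\iota_{\eta,\zeta}}(w)$ for all $x\in\g$ and all $w$. Hence if $w$ is annihilated by $\g$ then so is $\widetilde{\iota_{\eta,\zeta}}(w)$ (as $\eta$ is onto), so $\widetilde{\iota_{\eta,\zeta}}$ preserves $\Kir^\lambda(\g)=(S(\g)\otimes\End(V^\lambda))^\g$; running the same argument for $\eta\inv$ shows the inverse does too, so the restriction $\iota_{\eta,\zeta}$ is an automorphism of $\Kir^\lambda(\g)$.

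For (v), factor $S(\zeta\eta)=\zeta^{\deg}\circ S(\eta)$ so that $\widetilde{\iota_{\eta,\zeta}}=\zeta^{\deg}\circ\widetilde{\iota_{\eta,1}}$ (with $\zeta^{\deg}$ acting via the grading, in which $\End(V^\lambda)$ sits in degree $0$), which reduces the claim to showing $\widetilde{\iota_{\eta,1}}$ is the identity on $\Kir^\lambda(\g)$ for $\eta\in\Int(\g)$. Writing $\eta=\prod_i\exp(\ad X_i)$ with $X_i\in\g$ and choosing the compatible $A=\prod_i\exp(\rho^\lambda(X_i))$ (legitimate since $A$ is only determined up to a scalar, which does not affect $\Conj_A$), each factor $S(\exp\ad X_i)\otimes\Conj_{\exp\rho^\lambda(X_i)}$ is exactly $\exp(D_{X_i})$, where $D_X$ is the derivation of $S(\g)\otimes\End(V^\lambda)$ by which $X$ acts in the diagonal representation. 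Since $\Kir^\lambda(\g)$ is annihilated by every $D_x$, each $\exp(D_{X_i})$, hence their product $\widetilde{\iota_{\eta,1}}$, restricts to the identity there; and $\eta^*\lambda=\lambda$ for all $\lambda$ because inner automorphisms fix isomorphism classes of representations. Part (iv) is then a short consequence: $\psi$ normalises $\h$ and acts on $\h^*$ by a Weyl element stabilising $\wt(\eta^*\lambda)$, so $\rho^\lambda\circ\eta'$ and $\rho^\lambda\circ\eta$ share a weight set, hence a highest weight, giving $(\eta')^*\lambda=\eta^*\lambda$; assuming this is $\lambda$, take $A_{\eta',\lambda}=A_{\eta,\lambda}\circ A_{\psi,\lambda}$ and use functoriality of $S(-)$ together with multiplicativity of $\Conj$ to obtain $\widetilde{\iota_{\eta',\zeta}}=\widetilde{\iota_{\eta,\zeta}}\circ\widetilde{\iota_{\psi,1}}$ on $S(\g)\otimes\End(V^\lambda)$; restricting to $\Kir^\lambda(\g)$ and applying (v) to kill the factor $\widetilde{\iota_{\psi,1}}$ yields $\iota_{\eta',\zeta}=\iota_{\eta,\zeta}$.

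I expect the main obstacle to be conceptual rather than computational: throughout one must keep apart the automorphism $\widetilde{\iota_{\eta,\zeta}}$ of the ambient algebra $S(\g)\otimes\End(V^\lambda)$ -- which genuinely depends on the non-canonical choice of $\eta$ inside its class and on $A$ -- from its restriction $\iota_{\eta,\zeta}$ to $\Kir^\lambda(\g)$, where those dependencies disappear. The observation that unlocks (iv) and (v) is that, for inner $\eta$ and the matching choice of $A$, the map $\widetilde{\iota_{\eta,1}}$ is the exponential of an action derivation, hence trivial on $\g$-invariants despite being far from trivial on the ambient algebra; the remaining subtlety is the weight-theoretic identification of $\eta^*\lambda$ and the well-definedness of $A_{\eta,\lambda}$ in (i)--(ii), which are routine but must be set up carefully.
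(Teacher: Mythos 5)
Your proposal is correct and follows essentially the same route as the paper: (i) is weight bookkeeping, (ii) is Schur's lemma, (iii) is the intertwining of the diagonal $\g$-action with its $\eta$-twist, and your key observation for (iv)--(v) --- that for inner $\eta$ with the matching choice of $A$ the map $\widetilde{\iota_{\eta,1}}$ is a product of exponentials of the action derivations, hence trivial on $\g$-invariants --- is just the infinitesimal form of the paper's remark that ``the contribution of $\psi$ cancels out'' by the definition of $\Kir^\lambda(\g)$; you merely prove (v) first and deduce (iv), while the paper reverses that order. The only point to watch is in (ii): the isomorphism supplied by (i) naturally satisfies $A(x\cdot v)=\eta\inv(x)\cdot A(v)$ rather than the stated relation, which matters only for $\eta$ whose diagram part has order $>2$ and is harmless in every case used later ($\eta$ involutive or $\eta^*\lambda=\lambda$).
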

\begin{proof}
    The first item is straightforward, and the second an easy consequence of Schur's Lemma. For (iii), it suffices to observe that $\widetilde{\iota_{\eta,\zeta}}$ intertwines the usual $\g$-action on both factors with the action twisted by $\eta$; in particular it sends invariant elements to invariant elements. For item (iv) we use that $\lambda \circ \psi$ is in the same Weyl group orbit as $\lambda$, and then the definition of $\Kir^\lambda(\g)$ to see that the contribution of $\psi$ cancels out. Lastly, (v) is an application of (iv) followed by a straightforward computation of $\iota_{\id, \zeta}$. 
\end{proof}

\begin{prop}
\label{prop:sigma_action_translated}
    Let $\g$ be a complex semisimple Lie algebra, $G$ the connected simply connected Lie group with Lie algebra $\g$, $G^\vee$ the Langlands dual group of $G$ and $\g^\vee$ its Lie algebra. Let $\lambda$ be a minuscule weight of $\g$, which we view also as a cocharacter of $G^\vee$, and $\sigma$ a real structure of $G^\vee$ preserving the $G^\vee$-orbit of $\lambda$. Then there exists $\eta\in \Aut_2(\g)$ such that the involution $\sigma^*$ of $H^{2*}_{G^\vee}(G^\vee/P_\lambda)$ defined as in \eqref{eq:sigma_on_eq_coh} fits into a commutative diagram
    $$
    \begin{tikzcd}
        H^{2*}_{G^\vee}(G^\vee/P_\lambda)
            \arrow[r, "\sigma^*"]
        & H^{2*}_{G^\vee}(G^\vee/P_\lambda)
        \\
        \Kir^\lambda(\g)
            \arrow[r, "\iota_{\eta, -1}"]
            \arrow[u, "\cong"]
        &
        \Kir^\lambda(\g).
            \arrow[u, "\cong"]
    \end{tikzcd}
    $$

    An explicit construction of $\eta$ is as follows: let $\sigma^{s}$ be any split real structure of $G^\vee$, and let $\kappa$ denote the image of $\sigma\sigma^s$ in $\Out(\g^\vee)\coloneqq \Aut(\g^\vee)/\Int(\g^\vee)$. Use the canonical isomorphism $\Out(\g)\cong \Out(\g^\vee)$, to identify $\kappa$ with an element of $\Out(\g)$, and define $\eta\in \Aut(\g)$ to be any lift (of order two) of that element. 
\end{prop}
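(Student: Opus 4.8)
The plan is to reduce the statement, via Lemma \ref{lem:from_eq_coh_to_invrings}, to a claim about invariant polynomials on the canonical Levi subalgebra $\el_\lambda^\vee$ of $\g^\vee$, and then transport everything back to $\Kir^\lambda(\g)$ through the chain of isomorphisms of Remark \ref{rmk:geom_model_explicit}. First I would choose a Cartan subgroup $H^\vee \le G^\vee$ preserved by $\sigma$, together with a Borel $B^\vee \supseteq H^\vee$ making $\lambda$ dominant, and replace $\sigma$ by an inner-isomorphic representative without changing $\sigma^*$ (Lemma \ref{lem:inner_class_only}); by Remark \ref{rmk:notions_of_preserving}(2) I may moreover pass to the quasi-split representative $\sigma^{qs}$ inner to $\sigma$, so that $\sigma^{qs}$ preserves both $H^\vee$ and $B^\vee$ and, since $\sigma_*\lambda \in W\cdot\lambda$, satisfies $\sigma^{qs}_*\lambda = \lambda$. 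Write $\sigma^{qs} = \eta^\vee \sigma^s$ with $\sigma^s$ split and $\eta^\vee$ a pinning-preserving holomorphic involution of $\g^\vee$; then the image $\kappa$ of $\eta^\vee$ in $\Out(\g^\vee)$ is exactly the image of $\sigma\sigma^s$, so $\kappa$ is as described in the statement, and $\eta^\vee$ is a pinning-preserving lift of it. Under the canonical identification $\Out(\g)\cong \Out(\g^\vee)$ I then let $\eta\in \Aut_2(\g)$ be a pinning-preserving lift of the corresponding outer class; this is the $\eta$ of the statement (any order-two lift will do by Proposition \ref{prop:iota_eta_zeta}(iv), since lifts differ by an inner automorphism normalising $\h$).

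The second step is to identify $\sigma^*$ explicitly. Since $\sigma^{qs}_*\lambda = \lambda$, Lemma \ref{lem:from_eq_coh_to_invrings} identifies $\sigma^* = (\sigma^{qs})^*$ on $H^{2*}_{G^\vee}(G^\vee/P_\lambda) \cong \C[\el_\lambda^\vee]^{L_\lambda^\vee}$ with precomposition by the Cartan involution $\theta^\vee = \sigma^{qs}\tau^\vee$ of $\sigma^{qs}$ relative to a compatible compact real structure $\tau^\vee$ preserving $H^\vee$. Because $\sigma^{qs}$ is quasi-split, $\theta^\vee$ may be taken pinning-preserving, hence it acts on $\h^\vee$ by the outer part $\kappa$ composed with an element of the Weyl group; in particular its restriction to $\el_\lambda^\vee$ acts on invariant polynomials through the same outer data. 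Now I transport this along $\C[\el_\lambda^\vee]^{L_\lambda^\vee} \cong S(\h^\vee)^{W_\lambda} \cong S(\h)^{W_\lambda} \cong \Kir^\lambda(\g)$ of Remark \ref{rmk:geom_model_explicit}: the Langlands identification $\h^\vee \cong \h^*$ is $W$-equivariant and intertwines the outer action of $\kappa$ on $\g^\vee$ with that of $\kappa$ (via $\Out(\g)\cong\Out(\g^\vee)$) on $\g$, so $\theta^{\vee *}$ on the Levi side corresponds on the $\Kir^\lambda(\g)$ side to precomposition with (a $W_\lambda$-translate of) $\eta$ — i.e.\ to the automorphism $S(\eta)$ restricted to $\Kir^\lambda(\g)$, which is precisely the $\eta$-part of $\iota_{\eta,\zeta}$.

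The third step is to pin down the scalar $\zeta = -1$. The point is that the chain \eqref{eq:long_iso_Levi_level}, in passing from $S(\el_\lambda^\vee)^{L_\lambda^\vee}$ over $\R$ to the complexification and then to $\Kir^\lambda(\g)$ via the Killing form on $\g^\vee$, introduces a sign: a real structure acts conjugate-linearly, so on the complexified symmetric algebra $S(\el_\lambda^\vee)\otimes_\R\C$ the induced map is $S(\theta^\vee)$ twisted by complex conjugation on the scalars of the abstract $\C$ indexing degree — equivalently, after identifying with the holomorphic $S(\el_\lambda^\vee)$ it becomes $S(-\theta^\vee)$ on the degree-one part, hence $(-1)^{\deg}$ times $S(\theta^\vee)$. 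Tracking this factor through Lemma \ref{lem:from_eq_coh_to_invrings} and the Killing-form isomorphism \eqref{eq:killing_ring_isos} (where $\g^\vee \cong (\g^\vee)^*$ contributes no further sign) shows that $\sigma^*$ corresponds to $\iota_{\eta,-1}$ rather than $\iota_{\eta,1}$. In the split case $\eta$ is inner, and this recovers the fact (asserted in the introduction) that $\sigma^*$ acts as $(-1)^{\deg}$, consistent with Proposition \ref{prop:iota_eta_zeta}(v). Finally, since $\sigma^*$ is an involution, so is $\iota_{\eta,-1}$, which forces $\eta^2 \in \Int(\g)$ acting trivially — consistent with having chosen $\eta$ of order two — and Proposition \ref{prop:iota_eta_zeta}(iii) confirms $\iota_{\eta,-1}$ is a well-defined automorphism of $\Kir^\lambda(\g)$, completing the diagram.

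I expect the main obstacle to be the careful bookkeeping in the third step: the sign $\zeta=-1$ arises from the conjugate-linearity of $\sigma$ interacting with the real-to-complex base change \eqref{eq:Chern-Weil}, and it is easy to lose or double-count it when composing the five isomorphisms of \eqref{eq:long_iso_Levi_level}. A clean way to handle this is to verify it first in the rank-one model case (e.g.\ $\g = \sl_2$, $\lambda$ the fundamental weight, where $\Kir^\lambda(\g) = \C[t]$ and the split real structure acts by $t\mapsto -t$) and then argue that the scalar is forced to be the same in general because it is detected on the degree-one piece $\h \subseteq \Kir^\lambda(\g)$, on which all the identifications are linear and compatible with restriction to $\sl_2$-triples.
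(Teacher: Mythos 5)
Your overall skeleton matches the paper's proof (reduce via Lemma \ref{lem:inner_class_only} and Remark \ref{rmk:notions_of_preserving}(2) to a quasi-split representative $\sigma^{qs}$ fixing $\lambda$, apply Lemma \ref{lem:from_eq_coh_to_invrings}, pass through Chevalley restriction to $S(\h)^{W_\lambda}$ and then to $\Kir^\lambda(\g)$, and read off the outer class $\kappa$ from $\sigma\sigma^s$), but the step where you produce the scalar $\zeta=-1$ is based on two incorrect claims that happen to compensate each other. First, the Cartan involution of a \emph{quasi-split} real structure is not pinning-preserving in general; pinning-preserving Cartan involutions characterise the \emph{quasi-compact} structures (Definition \ref{def:qcrs}), which are a different class. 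Already for split $\sl_2$ the Cartan involution is the Chevalley involution, acting by $-1$ on $\h$ and sending $e\mapsto -f$, so it does not permute the chosen root vectors. Second, having (incorrectly) removed the $-1$ from $\theta^\vee|_{\h^\vee}$, you reinstate it by claiming that the real-to-complex base change in \eqref{eq:Chern-Weil}/\eqref{eq:long_iso_Levi_level} contributes an extra $(-1)^{\deg}$ twist coming from conjugate-linearity of $\sigma$. This contradicts Lemma \ref{lem:from_eq_coh_to_invrings}, which you yourself invoke: that lemma already traces $\sigma^*$ through all of \eqref{eq:long_iso_Levi_level}--\eqref{eq:killing_ring_isos} and asserts that the result is plain precomposition with the holomorphic Cartan involution $\theta$, with no additional sign; the conjugate-linearity is entirely absorbed in replacing $\sigma$ by $\theta=\sigma\tau$. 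If your bookkeeping in the third step were right, Lemma \ref{lem:from_eq_coh_to_invrings} as stated would be false.

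The correct source of the $-1$, and the route the paper takes, is structural: for a split structure the Cartan involution acts on $\h^*$ by $-1$, and in general one writes $\sigma^{qs}\widetilde{\eta}$ split with $\widetilde{\eta}$ pinning-preserving, so that $\theta|_{\h^*}=(-1)\cdot\widetilde{\eta}|_{\h^*}$ and hence $\theta^*$ becomes $\iota_{\id,-1}\circ\iota_{\eta,1}=\iota_{\eta,-1}$ after checking (as the paper does via the highest-weight component and the map $r_\lambda$) that $\widetilde{\eta}^*$ corresponds to $\iota_{\eta,1}$ under steps $\mathbf{3}$--$\mathbf{4}$. Your own proposed sanity check in $\sl_2$ would in fact expose the issue: there the sign $t\mapsto -t$ is visibly the action of the Chevalley involution on $\h$, not an artefact of complexification. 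So the statement you are proving is reached, but by a mechanism that does not withstand scrutiny; to repair the argument, drop the claim that $\theta^\vee$ is pinning-preserving, keep Lemma \ref{lem:from_eq_coh_to_invrings} as the sole accounting of the antiholomorphic-to-holomorphic passage, and extract the $-1$ from the $-1$-action of the split Cartan involution on the Cartan subalgebra.
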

\begin{proof}
    Let $\b\le \g$ be a Borel subalgebra and $\h\le \b$ a Cartan subalgebra; we may realise $\lambda$ as an element of $\h^*$ dominant with respect to $\b$ and identify $\h^*$ with a Cartan subalgebra of $\g^\vee$ preserved by $\sigma$.
    To keep track of identifications, we divide the isomorphism $H^{2*}_{G^\vee}(G^\vee/P_\lambda) \cong \Kir^\lambda(\g)$ sketched in Remark \ref{rmk:geom_model_explicit} into numbered steps (recalled below):
    $$
        H^{2*}_{G^\vee}(G^\vee/P_\lambda)
        \overset{\mathbf{1}}{\cong}
        \C[\el_\lambda]^{L_\lambda}
        \overset{\mathbf{2}}{\cong}
        \C[\h^*]^{W_\lambda}
        \overset{\mathbf{3}}{\cong}
        (S(\h) \otimes \End_\h(V^\lambda))^W
        \overset{\mathbf{4}}{\cong}
        \Kir^\lambda(\g).
    $$
    
    By Remark \ref{rmk:notions_of_preserving}(2), there exists a quasi-split real structure $\sigma^{qs}$ inner to $\sigma$ such that $\sigma^{qs}(\h^*) = \h^*$ and $\sigma^{qs}_*\lambda = \lambda$. Let $\theta$ be the Cartan involution of $\sigma^{qs}$. By Lemma \ref{lem:from_eq_coh_to_invrings}, step $\mathbf{1}$ (\cf \eqref{eq:long_iso_Levi_level}--\eqref{eq:killing_ring_isos}) then identifies $\sigma^*$ with $\theta^*$. Step $\mathbf{2}$ is the Chevalley restriction theorem for the connected reductive group $L_\lambda$ and the Cartan subalgebra $\h^*\le \el_\lambda$ (see Corollary \ref{cor:crt_extended} below). In particular, this step identifies $\theta^*$ with $(\theta|_{\h^*})^*$, which we still denote $\theta^*$ by abuse of notation. 

    Now, if $\sigma^{qs}$ is split then $\theta$ acts on $\h^*$ by $-1$. Since all isomorphisms above preserve the gradings, they identify $\sigma^*$ with $(-1)^{\deg} = \iota_{\id, -1}$ in this case. More generally, there exists a pinning-preserving involution $\widetilde{\eta}$ of $\g^\vee$ such that $\widetilde{\eta}(\h^*) = \h^*$ and $\sigma^s \coloneqq \widetilde{\eta}\sigma^{qs}$ is a split real structure.\footnote{To obtain $\eta^\vee$, use the action of $\sigma^{qs}$ on simple roots with respect to a Borel subalgebra fixed by $\sigma^{qs}$.} It therefore remains to translate the involution $\widetilde{\eta}^*$ through steps $\mathbf{3}$ and $\mathbf{4}$.

    Step $\mathbf{3}$ consists of noticing that the ring $\End_\h(V^\lambda)$ (the $\h$-equivariant endomorphisms of $V^\lambda$) is isomorphic to $\bigoplus_\mu\End(V^\lambda_\mu)$ where $V^\lambda_\mu$ are the weight spaces. Hence,
    $$
        \Kir^\lambda(\h) \overset{\text{def}}{=}
        (S(\h) \otimes \End_\h(V^\lambda))^W
        \cong
        (S(\h) \otimes \bigoplus \End(V^\lambda_\mu))^W,
    $$
    and since $W$ acts transitively on the one-dimensional summands $\End(V^\lambda_\mu)$, any $W$-invariant element of $S(\h) \otimes \bigoplus \End(V^\lambda_\mu)$ is determined by its component in $S(\h) \otimes \End(V^\lambda_\lambda)$. Restriction to this component yields an isomorphism $\Kir^\lambda(\h) \to S(\h)^{W_\lambda} = \C[\h^*]^{W_\lambda}$, the inverse of isomorphism $\mathbf{3}$ (cf.\,\cite[Thm.\,2.6]{Panyushev2004_endoj}). The isomorphism $\mathbf{4}$ is simply the inverse of the restriction map $r_\lambda$ from \eqref{eq:Cartan_restriction_Kirillov_algebra} (cf.\,Proposition \ref{prop:kirillov_facts}(v)). 
    
    We claim that steps $\mathbf{3}$ and $\mathbf{4}$ identify $\widetilde{\eta}^*$ with $\iota_{\eta,1}$, where $\eta\in \Aut(\g)$ preserves $\h$ and lifts the class of $\widetilde{\eta}$ in $\Out(\g^\vee)\cong \Out(\g)$. To see this, let $f\in \Kir^\lambda(\g)$ be arbitrary. We may assume $\eta$ to be pinning-preserving, so that $\lambda\circ \eta = \lambda$ and $\Conj_A$ acts trivially on $\End(V^\lambda_\lambda)$. If the $\End(V^\lambda_\lambda)$ component of $r_\lambda(f)$ is $f_\lambda$, then that of $r_\lambda(\iota_{\eta,1}(f))$ is $S(\eta)(f_\lambda)$. Viewing $S(\h)$ as $\C[\h^*]$, this is precisely $\widetilde{\eta}^*f_\lambda$, which proves our claim. 
    
    Altogether, the isomorphisms $\mathbf{1}$ through $\mathbf{4}$ identify $\sigma^* = (\sigma^s\widetilde{\eta})^*$ with
    $
        \iota_{\id, -1} \circ\iota_{\eta,1}
        =
        \iota_{\eta, -1}
    $
    as we wanted to show.
\end{proof}

\section{Invariant rings and quasi-compact real structures}
\label{sec:invrings_and_qcrs}
Lemma \ref{lem:from_eq_coh_to_invrings} translates the involutions \eqref{eq:sigma_on_eq_coh} we are interested in to the setting of invariant polynomials. In this section, we collect important facts about invariant polynomials, most importantly the key Lemma \ref{lem:dist_coinv}. This result motivates the use of quasi-compact real structures, as discussed at the end of the section. We begin by reiterating a definition from the previous section:
\begin{defn}
    Let $G$ be a complex Lie group with Lie algebra $\g$. Then $\C[\g]^G$ denotes the subring of $\C[\g] \coloneqq S(\g^*)$ consisting of elements invariant under the canonical $G$-action. Analogously, $\C[\g]^\g$ denotes the subring of $\C[\g]$ consisting of elements annihilated by $\g$.
\end{defn}
\begin{rmk}
    By definition of $\Int(\g)$, we have $\C[\g]^\g = \C[\g]^{\Int(\g)}$. If $G$ is connected, the adjoint action $G\to \Aut(\g)$ has values in $\Int(\g)$, so $\C[\g]^G = \C[\g]^\g$. For disconnected $G$, $\C[\g]^G$ can be strictly smaller than $\C[\g]^\g$, see Example \ref{expl:typeAexpl} below.
\end{rmk}

An important classical result about invariant polynomials is the following theorem of Chevalley:
\begin{thm}[\cf\cite{Varadarajan_Lie}, Thm.\,4.9.2]
\label{thm:CRT}
    Let $\g$ be a semisimple complex Lie algebra, $\h\le \g$ a Cartan subalgebra, and $W$ the corresponding Weyl group. Then the canonical restriction map $\C[\g]^\g\to \C[\h]^W \coloneqq S(\h^*)^W$ is an isomorphism.
\end{thm}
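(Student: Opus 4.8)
The plan is to establish three properties of the restriction map $\res\colon \C[\g]^\g\to\C[\h]^W$: that it is well defined (i.e.\ lands in the $W$-invariants), that it is injective, and that it is surjective; surjectivity will be the real content. Throughout I would fix the connected group $G$ with Lie algebra $\g$, so that $\C[\g]^\g=\C[\g]^G$, take $H=C_G(\h)$ as Cartan subgroup, and use that $W=N_G(H)/H$ acts on $\h$ through $\Ad(N_G(H))$.

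Well-definedness is then immediate: restricting a $G$-invariant polynomial to $\h$ yields an $\Ad(N_G(H))$-invariant, hence $W$-invariant, polynomial. For injectivity I would use that every semisimple element of $\g$ is $\Ad(G)$-conjugate into $\h$ (conjugacy of Cartan subalgebras) and that the regular semisimple locus is a nonempty Zariski-open subset of $\g$ (the complement of the discriminant hypersurface); hence a $G$-invariant polynomial vanishing on $\h$ vanishes on the Zariski-dense set $\Ad(G)\cdot\h$ and is therefore zero.

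For surjectivity I would argue degree by degree, $\res$ being graded with $\C[\h]^W_d=S^d(\h^*)^W$. First, $S^d(\h^*)^W$ is spanned by the $W$-orbit sums $H\mapsto\sum_{w\in W/W_\mu}(w\mu)(H)^d$, as $\mu$ runs over the dominant integral weights: the symmetrized tensors $v^{\otimes d}$ span $S^d(\h^*)$ over $v\in\h^*$, applying the averaging projector $\tfrac1{|W|}\sum_{w\in W}w$ (whose image is exactly $S^d(\h^*)^W$) produces, up to scalar, these orbit sums, and one may restrict $v$ to the Zariski-dense subset of dominant integral weights without losing the spanning property. Second, each such orbit sum lies in the image of $\res$: for dominant integral $\mu$ with irreducible representation $\rho_\mu\colon\g\to\End(V^\mu)$, the polynomial $\phi_{\mu,d}(X)\coloneqq\tr_{V^\mu}(\rho_\mu(X)^d)$ lies in $\C[\g]^\g$, and its restriction to $\h$ equals $\sum_\nu\dim(V^\mu[\nu])\,\nu(H)^d$; grouping weights into $W$-orbits and using that $\mu$ occurs with multiplicity one while every other dominant weight of $V^\mu$ lies strictly below $\mu$ in the dominance order, this equals the orbit sum for $\mu$ plus a nonnegative integer combination of orbit sums for smaller dominant weights. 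Since the dominant weights below a given one form a finite set, a downward induction on the dominance order expresses every orbit sum as a $\C$-linear combination of the $\res(\phi_{\nu,d})$, proving surjectivity in each degree and hence overall.

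The main obstacle is precisely surjectivity — concretely, producing enough invariant polynomials on $\g$ to reach all of $\C[\h]^W$. The representation-theoretic route above avoids constructing explicit generators type by type; its load-bearing inputs are the multiplicity-one property of the highest weight, the $W$-invariance of weight multiplicities, the finiteness of the set of dominant weights below a given one, and the Zariski density of the dominant integral weights in $\h^*$. (An alternative is to invoke Chevalley's theorem that $\C[\h]^W$ is a polynomial ring on $\rank\g$ homogeneous generators and to exhibit that many algebraically independent invariants with matching restrictions, but I would prefer the argument above as it does not need that input.)
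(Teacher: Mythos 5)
The paper does not prove this statement at all -- it is quoted as a classical result with a citation to Varadarajan (Thm.\,4.9.2) -- so there is no internal proof to compare against; your argument is essentially the standard representation-theoretic proof found in that reference. It is correct: well-definedness and injectivity (conjugacy of Cartan subalgebras plus Zariski density of the regular semisimple locus) are fine, and the surjectivity argument via the trace functions $X\mapsto \tr_{V^\mu}(\rho_\mu(X)^d)$, the multiplicity-one highest weight, $W$-invariance of weight multiplicities, and induction over the finitely many dominant weights below $\mu$ is the classical route. The one step you state tersely -- that restricting $v$ to the Zariski-dense set of dominant integral weights does not shrink the span of the averaged powers $\tfrac{1}{|W|}\sum_w (wv)^d$ -- is routine and correctly justified by your density reasoning: a linear functional killing all these elements would give a polynomial in $v$ vanishing on a dense set, hence identically, contradicting that powers of linear forms span $S^d(\h^*)$ in characteristic zero.
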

\begin{cor}
\label{cor:crt_extended}
    Let $G$ be a complex reductive Lie group with Lie algebra $\g$. Let $\h\le \g$ be a Cartan subalgebra, and $N_G(\h)$ its normaliser. Then restriction defines an isomorphism
    $$
        \C[\g]^G \cong \C[\h]^{N_G(\h)}.
    $$
\end{cor}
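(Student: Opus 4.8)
The plan is to reduce to the semisimple case handled by Chevalley's theorem (Theorem \ref{thm:CRT}), bookkeeping the centre and the component group of $G$ separately. Write $\g = \z(\g) \oplus \g^{ss}$ with $\g^{ss} = [\g,\g]$ semisimple, and correspondingly $\h = \z(\g) \oplus \h^{ss}$ where $\h^{ss} = \h \cap \g^{ss}$ is a Cartan subalgebra of $\g^{ss}$. Both decompositions are canonical and preserved by every automorphism of $\g$, so $\C[\g] \cong \C[\z(\g)] \otimes \C[\g^{ss}]$ as algebras with $\Aut(\g)$-action, and likewise $\C[\h] \cong \C[\z(\g)] \otimes \C[\h^{ss}]$. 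The restriction map $\C[\g]^G \to \C[\h]^{N_G(\h)}$ is the tensor product of the identity on $\C[\z(\g)]^{G}$ (note $G$ acts trivially on $\z(\g)$, as $G^\circ$ does by definition of the adjoint action and the centre is $\Aut(\g^\circ)$-fixed pointwise only up to the component group — see the subtlety below) with the restriction $\C[\g^{ss}]^{?} \to \C[\h^{ss}]^{?}$.

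The key point is to pin down the group that acts. Let $G^\circ$ be the identity component; then $G/G^\circ$ is finite and acts on $\g$ through $\Aut(\g)$, while $G^\circ$ acts through $\Int(\g)$. First I would recall that $\C[\g]^{G^\circ} = \C[\g]^{\g} = \C[\z(\g)] \otimes \C[\g^{ss}]^{\g^{ss}}$, and by Theorem \ref{thm:CRT} this restricts isomorphically onto $\C[\z(\g)] \otimes \C[\h^{ss}]^{W} = \C[\h]^{N_{G^\circ}(\h)}$, using that $N_{G^\circ}(\h)/Z_{G^\circ}(\h)$ is exactly the Weyl group $W$ for connected reductive $G$. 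So the corollary holds for $G^\circ$. To pass to $G$, take $G$-invariants on both sides: the left side becomes $(\C[\g]^{G^\circ})^{G/G^\circ} = \C[\g]^{G}$, and since the restriction isomorphism for $G^\circ$ is $\Aut(\g)$-equivariant (it is canonical, being characterised by Theorem \ref{thm:CRT}), taking $G/G^\circ$-invariants on the right yields $(\C[\h]^{N_{G^\circ}(\h)})^{G/G^\circ}$. The remaining task is to identify this last ring with $\C[\h]^{N_G(\h)}$, i.e.\ to check that the $N_G(\h)$-action on $\C[\h]$ factors through the composite of the $W$-action and the residual $G/G^\circ$-action. This holds because $N_G(\h)$ surjects onto $G/G^\circ$ (any component of $G$ contains an element normalising $\h$, as all Cartan subalgebras of $\g$ are $G^\circ$-conjugate and the conjugation action is available componentwise), and $N_{G^\circ}(\h)$ acts on $\h$ through $W$; one checks the two actions are compatible as subquotients of $N_G(\h)$ acting on $\h$.

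I expect the main obstacle to be the last bookkeeping step: making precise that $\C[\h]^{N_G(\h)} = (\C[\h]^{W})^{N_G(\h)/W'}$ for the appropriate finite quotient, and in particular verifying that $N_G(\h) \to G/G^\circ$ is surjective so that no extra invariance is imposed or lost. Care is needed because, as the paper's conventions warn, for disconnected $G$ the ``Weyl group'' $N_G(H)/H$ can be strictly larger than the Weyl group of $\g$, and here that larger group is precisely what supplies the extra invariance matching $\C[\g]^G \subsetneq \C[\g]^\g$. A clean way to phrase the whole argument, avoiding component-by-component fuss, is: the canonical isomorphism $\C[\g]^\g \xrightarrow{\sim} \C[\h]^W$ of Theorem \ref{thm:CRT} is equivariant for $\Aut(\g, \h) \coloneqq \{\psi \in \Aut(\g) : \psi(\h) = \h\}$ (which acts on the target through its image in $\mathrm{GL}(\h)$, normalising $W$); now restrict this equivariance along the homomorphism $N_G(\h) \to \Aut(\g,\h)$ and take invariants on both sides, using $\C[\g]^G = (\C[\g]^\g)^{N_G(\h)}$ — which itself follows since $G = G^\circ \cdot N_G(\h)$ and $G^\circ$ acts through $\Int(\g)$ — to land on $\C[\h]^{N_G(\h)}$.
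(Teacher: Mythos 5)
Your proposal is correct and follows essentially the same route as the paper: extend Chevalley restriction to the connected reductive group via the decomposition $\g=\z(\g)\oplus[\g,\g]$, then use that $N_G(\h)$ meets every component of $G$ (so $G=G^\circ\cdot N_G(\h)$) together with equivariance of the restriction map to pass from $G^\circ$- to $G$-invariants and identify $(\C[\h]^W)^{N_G(\h)}$ with $\C[\h]^{N_G(\h)}$. The only blemish is the parenthetical claim in your first paragraph that $G$ acts trivially on $\z(\g)$ (the component group may act nontrivially there), but your actual argument never uses this, so it is harmless.
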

\begin{proof}[Proof of Corollary]
    Let $G_0$ denote the identity component of $G$. Using the usual decomposition of $\g$ into its centre and derived subalgebra, Theorem \ref{thm:CRT} extends at once to
    $$
        \C[\g]^{G_0} \cong \C[\h]^W.
    $$
    To obtain the $G-$invariants, we now have to take into account the action of the component group $G/G_0$. But it is easy to see that $N_G(\h)$ meets all components of $G$, so
    $$
        \C[\g]^G
        =
        (\C[\g]^{G_0})^{G/G_0}
        =
        (\C[\g]^{G_0})^{N_G(\h)}
        \cong
        (\C[\h]^W)^{N_G(\h)}
        =
        \C[\h]^{N_G(\h)}.
    $$
\end{proof}

Having recalled this tool, we now return to the study of involutions on invariant polynomial rings. Directly from the definition, we obtain the following counterpart of Lemma \ref{lem:inner_class_only}:
\begin{prop}
    Let $\g$ be a complex Lie algebra and $\theta\in \Aut_2(\g)$. Then the involution $\theta^*$ of $\C[\g]^\g$ depends only on the inner class of $\theta$.
\end{prop}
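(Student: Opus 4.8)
The plan is to reduce the claim to the definitional fact that $\C[\g]^\g = \C[\g]^{\Int(\g)}$ (already noted in the remark following the definition of invariant rings) together with the normality of $\Int(\g)$ in $\Aut(\g)$. First I would spell out what ``depends only on the inner class'' means here: if $\theta_1, \theta_2 \in \Aut_2(\g)$ are inner to each other, so $\theta_2 = \psi\theta_1$ for some $\psi \in \Int(\g)$, then the two involutions $\theta_1^*, \theta_2^*$ of $\C[\g]^\g$ (precomposition of polynomials) coincide. Since $\theta_i^*$ on $\C[\g]$ is given by $(\theta_i^* f)(x) = f(\theta_i^{-1} x)$, we compute $(\theta_2^* f)(x) = f(\theta_1^{-1}\psi^{-1} x) = (\theta_1^* f)(\psi^{-1} x) = \big(\psi^*(\theta_1^* f)\big)(x)$, i.e.\ $\theta_2^* = \psi^* \circ \theta_1^*$ as operators on $\C[\g]$.

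The key point is then that $\psi^*$ acts trivially on the subring $\C[\g]^\g$: since $\psi \in \Int(\g)$, any $f$ with $\C[\g]^\g \ni f = \C[\g]^{\Int(\g)}$ satisfies $\psi^* f = f$ by definition of the invariant ring. It remains only to check that $\theta_1^*$ actually preserves $\C[\g]^\g$ so that the composite makes sense on that subring — but this is immediate because $\theta_1$ normalises $\Int(\g)$ (as $\Int(\g) \trianglelefteq \Aut(\g)$), so $\theta_1^*$ maps $\C[\g]^{\Int(\g)}$ to itself; concretely, for $f \in \C[\g]^\g$ and $\phi \in \Int(\g)$ we have $\phi^*(\theta_1^* f) = \theta_1^*\big((\theta_1^{-1}\phi\theta_1)^* f\big) = \theta_1^* f$. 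Combining, $\theta_2^*|_{\C[\g]^\g} = \psi^*|_{\C[\g]^\g} \circ \theta_1^*|_{\C[\g]^\g} = \id \circ \theta_1^*|_{\C[\g]^\g} = \theta_1^*|_{\C[\g]^\g}$, which is exactly the assertion.

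There is essentially no obstacle here — the statement is a direct bookkeeping consequence of the identification of $\g$-invariant polynomials with $\Int(\g)$-invariant polynomials. The only mild subtlety worth a sentence in the write-up is the distinction between ``$\theta_1 \sim_i \theta_2$'' (differ by an element of $\Int(\g)$ on the left) and its equivalent reformulation with the inner element on the right, noted in Definition (iii); either form works in the computation above since $\Int(\g)$ is normal, and one should just pick one convention and carry it through. If one prefers, the whole argument can be phrased one line shorter by noting directly that $\theta_2^* = \psi^* \theta_1^*$ and that $\psi^*$ restricts to the identity on $\C[\g]^{\Int(\g)}$.
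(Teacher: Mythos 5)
Your proof is correct and is exactly the ``directly from the definition'' argument the paper has in mind (the paper states the proposition without proof): identify $\C[\g]^\g$ with $\C[\g]^{\Int(\g)}$, use normality of $\Int(\g)$ in $\Aut(\g)$ to see that $\theta_1^*$ preserves this subring, and note that $\psi^*$ acts trivially on it. Apart from the garbled phrase ``$\C[\g]^\g \ni f = \C[\g]^{\Int(\g)}$'' (clearly meant as $f\in \C[\g]^\g = \C[\g]^{\Int(\g)}$), there is nothing to change.
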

This suggests to look for particularly well-behaved involutions in a given inner class, and a natural candidate are those that preserve a pinning (see Definition \ref{def:pinning}). We recall some well-known results in this context:
\begin{lem}
\label{lem:fixed_subalgebras}
    Let $\g$ be a complex reductive Lie algebra and $\theta \in \Aut(\g)$.
    \begin{itemize}
        \item[(i)] The fixed subalgebra $\g^\theta$ is reductive. 
        
        \item[(ii)] If $\h$ is a Cartan subalgebra of $\g$ preserved by $\theta$ such that $\dim \h^\theta$ is maximal among such Cartan subalgebras, then $\h^\theta$ is a Cartan subalgebra of $\g^\theta$. 
        
        \item[(iii)] If $\g$ is semisimple and $\theta$ preserves a pinning $(\h, \Pi, \{X_\alpha\}_{\alpha\in \Pi})$, then $\g^\theta$ is semisimple with Cartan subalgebra $\h^\theta$. Moreover, if $W$ is the Weyl group of $(\g,\h)$, then its subgroup
        $$
            W^\theta \coloneqq \{w\in W\colon w\theta = \theta w\}
        $$
        is identified with the Weyl group of $(\g^\theta, \h^\theta)$ by restriction to $\h^\theta$.
    \end{itemize}
\end{lem}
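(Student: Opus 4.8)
The three assertions are classical structure theory, and the plan is to derive (ii) and (iii) from part (i) together with two standard facts: Steinberg's theorem that every automorphism of a connected linear algebraic group stabilises some maximal torus, and the fact that the Cartan subalgebras of the centraliser in $\g$ of a toral subalgebra $\s\le\g$ are exactly the Cartan subalgebras of $\g$ containing $\s$. Throughout I take $\theta$ of finite order — equivalently, semisimple as an element of the algebraic group $\Aut(\g)$; this is the only case used in the paper, and is already needed for (i) (the fixed subalgebra of a unipotent automorphism of a reductive Lie algebra need not be reductive). For (i) itself: since $\theta$ preserves $\z(\g)$ and $\g^{ss}=[\g,\g]$, one has $\g^\theta = \z(\g)^\theta\oplus(\g^{ss})^\theta$ and we may assume $\g$ semisimple; then applying Steinberg's theorem to the adjoint group $\Int(\g)$ shows that the identity component of $\Int(\g)^\theta$ is reductive with Lie algebra $\g^\theta$. (Alternatively, one may cite directly the standard statement that fixed points of a linearly reductive group of automorphisms of a reductive Lie algebra form a reductive Lie algebra.)

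Granting (i), $\g^\theta$ is reductive, so all of its maximal toral subalgebras are Cartan subalgebras and have the common dimension $\rank\g^\theta$. If $\h$ is any $\theta$-stable Cartan subalgebra of $\g$, then the elements of $\h^\theta$ are $\ad$-semisimple in $\g$ — being elements of a Cartan subalgebra — hence $\ad$-semisimple in $\g^\theta$, so $\h^\theta$ is a toral subalgebra of $\g^\theta$; thus $\dim\h^\theta\le\rank\g^\theta$, with equality exactly when $\h^\theta$ is a Cartan subalgebra of $\g^\theta$. That this bound is attained follows by reversing the construction: pick a Cartan subalgebra $\te$ of $\g^\theta$ and set $\mathfrak{c}\coloneqq\z_\g(\te)$, a $\theta$-stable reductive subalgebra of $\g$ in whose centre $\te$ lies; by Steinberg's theorem $\theta|_{\mathfrak{c}}$ stabilises some Cartan subalgebra $\h$ of $\mathfrak{c}$, and since the Cartan subalgebras of $\z_\g(\te)$ are precisely the Cartan subalgebras of $\g$ containing $\te$, this $\h$ is a $\theta$-stable Cartan of $\g$ with $\te\subseteq\h^\theta$; maximality of $\te$ forces $\h^\theta=\te$. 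Hence the maximum of $\dim\h^\theta$ over $\theta$-stable Cartan subalgebras of $\g$ equals $\rank\g^\theta$, and is attained exactly when $\h^\theta$ is a Cartan subalgebra of $\g^\theta$ — which is the assertion of (ii).

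For (iii) we have $\g$ semisimple and $\theta$ pinning-preserving, so $\theta$ fixes the given $\h$, permutes $\Pi$ through a diagram automorphism of order $d\in\{1,2,3\}$, and is itself of order $d$. Write $\h=\bigoplus_{\zeta^d=1}\h_\zeta$ for the eigenspace decomposition of $\theta|_\h$ (so $\h_1=\h^\theta$), and recall $\theta(\g_\alpha)=\g_{\theta\alpha}$ for each root $\alpha$. For a $\theta$-orbit $O$ of roots, $\theta$ cyclically permutes the one-dimensional root spaces $\{\g_\beta\}_{\beta\in O}$, and a short computation shows its fixed subspace in $\bigoplus_{\beta\in O}\g_\beta$ is at most one-dimensional; when nonzero, $\h^\theta$ acts on it through the linear form $\bar\alpha\coloneqq\alpha|_{\h^\theta}$ (any $\alpha\in O$; well-defined because $\theta$ fixes $\h^\theta$ pointwise), so that $\g^\theta$ is the direct sum of $\h^\theta$ and these orbit-fixed subspaces. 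The crucial point is that every such $\bar\alpha$ is nonzero: the component of $\alpha$ in $(\h^\theta)^*\subseteq\h^*$ is $\tfrac1d\sum_j\theta^j\alpha$, which is nonzero because $\sum_j\theta^j\alpha$ is a nonzero combination of the simple roots whose coefficients all have the same sign. Consequently the centraliser of $\h^\theta$ in $\g^\theta$ is $\h^\theta$ itself, and $\h^\theta$ is self-normalising in $\g^\theta$ — a generic $t\in\h^\theta$ has $\bar\alpha(t)\ne0$ for every orbit, so $\ad t$ already separates the orbit-components of a normalising element. Being abelian, hence nilpotent, and self-normalising, $\h^\theta$ is a Cartan subalgebra of $\g^\theta$; its roots are the nonzero $\bar\alpha$, and these span $(\h^\theta)^*$ since the restriction $\h^*\to(\h^\theta)^*$ is surjective and the roots of $\g$ span $\h^*$. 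Therefore $\z(\g^\theta)=\bigcap\ker\bar\alpha=0$, and $\g^\theta$ is semisimple by (i).

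For the Weyl-group statement, every $w\in W^\theta$ commutes with $\theta|_\h$ and hence preserves $\h_1=\h^\theta$, so restriction yields a homomorphism $W^\theta\to\GL(\h^\theta)$, which I would show is an isomorphism onto $W(\g^\theta,\h^\theta)$. Injectivity is now immediate: since every $\bar\alpha\ne0$, the space $\h^\theta$ contains a regular element $t_0$ of $\g$ (one with $\alpha(t_0)\ne0$ for all roots $\alpha$), and $W$ acts freely on regular elements, so $w|_{\h^\theta}=\id$ forces $w=1$. For surjectivity one exhibits, for each $\theta$-orbit on $\Pi$, an element of $W^\theta$ whose restriction to $\h^\theta$ is the corresponding simple reflection of $W(\g^\theta)$ — typically the suitably ordered product of the reflections $s_\beta$ with $\beta$ in the orbit — and checks that these generate $W(\g^\theta)$. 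This orbit-by-orbit folding is where the real work lies, the delicate case being type $A_{2n}$: there one orbit carries a root space on which $\theta$ acts by $-1$, which both accounts for the $A_{2n}\to B_n$ folding and forces the corresponding reflection to be taken in the short restricted root. All of (iii) is, in any case, available in the standard references on fixed points of pinning-preserving automorphisms, which I would cite as the primary justification. The main obstacle in Lemma \ref{lem:fixed_subalgebras} is thus precisely this matching of Weyl-group generators in (iii); parts (i) and (ii) are short formal deductions from Steinberg's theorem and the two facts recalled at the outset.
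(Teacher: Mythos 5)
Your argument is sound and, except for the end of part (iii), more self-contained than the paper's, whose proof is largely by citation: the paper proves (i) by noting that the Killing form of $[\g,\g]$ remains nondegenerate on $[\g,\g]^\theta$, handles (ii) by invoking Gantmacher's normal form to relate $\theta$ to a pinning-preserving automorphism, and for (iii) simply refers to Steinberg. Your (ii) -- bounding $\dim\h^\theta$ by $\rank \g^\theta$ via torality, and attaining the bound by taking a Cartan subalgebra $\te\le\g^\theta$, passing to the $\theta$-stable reductive centraliser $\z_\g(\te)$, and choosing a $\theta$-stable Cartan subalgebra there -- is a genuinely different and arguably cleaner route than the paper's appeal to normal forms, at the price of invoking Steinberg's torus-stabilisation theorem. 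Your (iii) does more than the paper (nonvanishing of the restricted roots, $\h^\theta$ self-centralising, semisimplicity, injectivity of $W^\theta\to\GL(\h^\theta)$), but, like the paper, it ultimately defers the hard point -- that the restriction of $W^\theta$ is exactly the Weyl group of $(\g^\theta,\h^\theta)$, i.e.\ both the containment of the image in that Weyl group and the surjectivity via orbitwise folded reflections -- to the standard references, which is exactly what the paper does. Your restriction to finite-order $\theta$ is a fair reading: the lemma is stated for arbitrary $\theta\in\Aut(\g)$, but the paper's own Killing-form argument for (i) likewise needs $\theta$ semisimple, and every use in the paper is of finite order. Two small blemishes, neither fatal: the diagram automorphism induced by a pinning-preserving $\theta$ has order in $\{1,2,3\}$ only when $\g$ is simple (for semisimple $\g$ it may permute factors with larger order; harmless, as your computation never uses the bound); and in the semisimplicity step the restrictions $\bar\alpha$ you use to span $(\h^\theta)^*$ should be taken from orbits whose fixed subspace is nonzero -- the orbits of simple roots suffice, since $\theta$ permutes the pinned vectors $X_\alpha$ without signs, so their orbit sums are genuinely fixed and their restrictions already span.
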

\begin{proof}
    For part (i), note that $\theta$ preserves the direct sum decomposition of $\g$ into its centre $\z(\g)$ and derived subalgebra $[\g,\g]$. The Killing form of $[\g,\g]$ restricts to a nondegenerate $\ad$-invariant symmetric bilinear form of $[\g,\g]^\theta$, which shows that $[\g,\g]^\theta$ is reductive. It follows that $\g^\theta = \z(\g)^\theta \oplus [\g,\g]^\theta$ is reductive as well. 

    To see that $\h^\theta$ of part (ii) is a Cartan subalgebra, one can for instance use Gantmacher's normal form, \cf\ \cite[Thm.\,4.2]{Onishchik2004_real}, relating $\theta$ to a pinning-preserving automorphism.
    
    For part (iii), we refer to \cite[ch.\,11]{Steinberg1967_Chevalley_groups}.
\end{proof}
\begin{lem}
\label{lem:dist_coinv}
    Let $\g$ be a complex reductive Lie algebra and $\theta$ a pinning-preserving automorphism of $\g$. Then the restriction map
    $
        \C[\g]^\g \to \C[\g^\theta]^{\g^\theta}
    $
    is surjective and induces an isomorphism of $\C[\g^\theta]^{\g^\theta}$ with the coinvariant ring $\C[\g]^\g_{\theta^*}$. 
\end{lem}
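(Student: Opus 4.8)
The plan is to transport the statement to the Cartan subalgebra via Chevalley restriction and then analyse the induced restriction map between invariant rings of finite reflection groups, using homogeneous generators adapted to $\theta$. Write $\g=\z(\g)\oplus[\g,\g]$; on the centre $\theta$ acts linearly and the restriction $\C[\z(\g)]\to\C[\z(\g)^\theta]$ is handled by a direct computation (its kernel is the ideal generated by $(\operatorname{id}-\theta^*)\z(\g)^*$, which cuts out $\z(\g)^\theta=\z(\g^\theta)$), so I may assume $\g$ semisimple and $\theta$ a diagram automorphism of finite order. I would then fix a $\theta$-stable pinning $(\h,\Pi,\{X_\alpha\}_{\alpha\in\Pi})$, so $\theta(\h)=\h$; by Lemma~\ref{lem:fixed_subalgebras} the fixed algebra $\g^\theta$ is semisimple with Cartan subalgebra $\h^\theta$ and Weyl group $W^\theta$. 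Applying Chevalley restriction (Theorem~\ref{thm:CRT}, and Corollary~\ref{cor:crt_extended} in the reductive case) to both $\g$ and $\g^\theta$ yields a commutative square with vertical isomorphisms $\C[\g]^\g\cong\C[\h]^W$ and $\C[\g^\theta]^{\g^\theta}\cong\C[\h^\theta]^{W^\theta}$ and horizontal arrows the restriction maps — commutative because every map is restriction of polynomial functions along $\h^\theta\subseteq\h\subseteq\g$ and $\h^\theta\subseteq\g^\theta\subseteq\g$ — under which $\theta^*$ on $\C[\g]^\g$ corresponds to $(\theta|_\h)^*$ on $\C[\h]^W$. So it suffices to show that $r\colon\C[\h]^W\to\C[\h^\theta]^{W^\theta}$ is surjective with kernel the coinvariant ideal $(g-\theta^*g:g\in\C[\h]^W)$.

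Since $\theta^*$ has finite order it acts semisimply on the graded ring $\C[\h]^W$, so a minimal homogeneous system of algebra generators can be chosen to consist of $\theta^*$-eigenvectors $f_1,\dots,f_n$, $\theta^*f_i=\epsilon_if_i$; these are algebraically independent because $\C[\h]^W$ is a polynomial ring in $n=\dim\h$ variables. The coinvariant ideal is then $(f_i-\epsilon_if_i)=(f_i:\epsilon_i\ne1)$, so $\C[\h]^W_{\theta^*}$ is the polynomial ring $\C[f_i:\epsilon_i=1]$. Each $f_i$ with $\epsilon_i\ne1$ vanishes on $\h^\theta$ (evaluate $\theta^*f_i=\epsilon_if_i$ at $x\in\h^\theta$: $\epsilon_if_i(x)=f_i(\theta x)=f_i(x)$), so $r$ factors through a graded surjection $\C[f_i:\epsilon_i=1]\twoheadrightarrow\operatorname{im}r$, and everything reduces to showing this is an isomorphism onto $\C[\h^\theta]^{W^\theta}$.

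For that I would first note that $\C[\h^\theta]$ is module-finite over $\operatorname{im}r$: any linear coordinate $\ell$ on $\h^\theta$, extended to a linear form $\tilde\ell$ on $\h$, is a root of the monic polynomial $\prod_{w\in W}\bigl(X-(w\tilde\ell)|_{\h^\theta}\bigr)$ whose coefficients are restrictions of $W$-invariants and hence lie in $\operatorname{im}r$; thus $\operatorname{im}r$ and $\C[\h^\theta]^{W^\theta}$ both have Krull dimension $m\coloneqq\dim\h^\theta$. The key input is the classical \emph{folding correspondence}: for a pinning-preserving $\theta$ the fundamental degrees of the reflection group $(W^\theta,\h^\theta)$ coincide, as a multiset, with $\{\deg f_i:\epsilon_i=1\}$ — in particular exactly $m$ of the $\epsilon_i$ equal $1$. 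Granting this, $\C[f_i:\epsilon_i=1]$ is a polynomial ring in $m$ variables surjecting onto the $m$-dimensional domain $\operatorname{im}r$, so the kernel of the surjection is a height-zero prime, hence zero, and the surjection is an isomorphism; then $\operatorname{im}r$ and $\C[\h^\theta]^{W^\theta}$ are polynomial rings with the same generator degrees, hence the same Hilbert series, and since one contains the other they coincide. Running this back through the commutative square identifies $\C[\g^\theta]^{\g^\theta}$ with $\C[\g]^\g_{\theta^*}$.

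The main obstacle is precisely the folding correspondence invoked above, namely that the $\theta^*$-fixed fundamental degrees of $W$ are exactly the fundamental degrees of $W^\theta$. I would obtain it either from Springer's theory of regular elements or, concretely, by going through the classification of pinning-preserving automorphisms — the diagram automorphisms of $A_n$, $D_n$, $E_6$ and the triality of $D_4$ — and comparing the degree lists case by case; the remaining ingredients (the Chevalley reduction, the $\theta^*$-adapted generators, and the dimension and Hilbert-series bookkeeping) are routine.
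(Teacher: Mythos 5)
Your proposal is correct and follows essentially the same route as the paper's first proof: reduce to the semisimple case by splitting off the centre, pass through Chevalley restriction to $\C[\h]^W \to \C[\h^\theta]^{W^\theta}$ using that $\h^\theta$ is a Cartan subalgebra of $\g^\theta$ with Weyl group $W^\theta$, choose $\theta^*$-eigenvector generators, and conclude via the matching of degrees of $W^\theta$ with the $\varepsilon_i=1$ degrees, with only the final bookkeeping (your Krull-dimension/Hilbert-series argument versus the paper's fibre-of-zero dimension count) differing. The ``main obstacle'' you defer is settled in the paper exactly by Springer's theory of regular elements -- $\theta$ fixes the sum of positive coroots, a regular element, so Springer's Corollary 6.5 gives that the degrees of $W^\theta$ are precisely the $d_i$ with $\varepsilon_i=1$ -- which is uniform and avoids your case-by-case alternative (which, as stated, would also miss pinning-preserving automorphisms permuting simple factors); the paper additionally records a second, independent proof via Kostant sections.
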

\begin{proof}
Let $\g_{\text{der}}\coloneqq [\g,\g]$ be the derived subalgebra. The decomposition $\g = \z(\g) \oplus \g_{\text{der}}$ is preserved by $\theta$ and yields
$$
    \C[\g]^\g \cong \C[\z(\g)]\otimes \C[\g_{\text{der}}]^{\g_{\text{der}}}.
$$
Clearly, it then suffices to prove the lemma for $\z(\g)$ and $\g_{\text{der}}$ separately. For the affine space $\z(\g)$ it follows immediately from Proposition \ref{prop:coinv_ring}, so for the remainder we may assume that $\g$ is semisimple.

Now let $(\h, \Pi, \{X_\alpha\}_{\alpha\in \Pi})$ be a pinning of $\g$ preserved by $\theta$. By Lemma \ref{lem:fixed_subalgebras}(iii), $\h^\theta$ is then a Cartan subalgebra of $\g^\theta$. Moreover, the sum $e\coloneqq \sum_{\alpha} X_\alpha$ is fixed by $\theta$ and is a principal nilpotent element (\cf \cite[\S5]{Kostant1959_TDS}) for both $\g$ and $\g^\theta$. Upon extending it to an $\sl_2$-triple $(e,f,h)$ in $\g^\theta$, we obtain Kostant sections $\s\coloneqq e+ \g_f$ of $\g$ and $e+\g^\theta_f = \s^\theta$ for $\g^\theta$. By \cite[Thm.\,7]{Kostant1963_LGRoPR}, the restriction map $\C[\g]^\g\to \C[\g^\theta]^{\g^\theta}$ is then equivalent to the restriction $\C[\s]\to \C[\s^\theta]$. This is clearly surjective, and Proposition \ref{prop:coinv_ring} identifies $\C[\s^\theta]$ with the coinvariant ring $\C[\s]_\theta \cong \C[\g]^\g_{\theta^*}$.
\end{proof}
\begin{cor}
\label{cor:disconn_inv_ring_nice}
    Let $G$ be a connected complex reductive Lie group with Lie algebra $\g$. If $\theta$ is a pinning-preserving automorphism of $G$ (with derivative also denoted by $\theta$), then
    $$
        \C[\g^\theta]^{G^\theta} = \C[\g^\theta]^{\g^\theta}.
    $$
\end{cor}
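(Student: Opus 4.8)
The plan is to prove the asserted equality by establishing the two inclusions separately, with the non-trivial one coming essentially for free from Lemma~\ref{lem:dist_coinv}.

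First I would dispose of the inclusion $\C[\g^\theta]^{G^\theta} \subseteq \C[\g^\theta]^{\g^\theta}$. Since $\theta$ is an automorphism of $G$, the fixed-point subgroup $G^\theta$ is a closed (possibly disconnected) subgroup with Lie algebra $\g^\theta$, which is reductive by Lemma~\ref{lem:fixed_subalgebras}(i). Let $(G^\theta)^\circ$ denote its identity component. The adjoint action of $(G^\theta)^\circ$ on $\g^\theta$ has image a connected subgroup of $\GL(\g^\theta)$ with Lie algebra $\ad(\g^\theta)$, hence equal to $\Int(\g^\theta)$; combined with the fact (recorded in \S\ref{sec:invrings_and_qcrs}) that $\C[\g^\theta]^{\g^\theta} = \C[\g^\theta]^{\Int(\g^\theta)}$, this gives $\C[\g^\theta]^{\g^\theta} = \C[\g^\theta]^{(G^\theta)^\circ}$, which visibly contains $\C[\g^\theta]^{G^\theta}$.

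For the reverse inclusion I would use that $G$ is connected, so $\C[\g]^G = \C[\g]^\g$, and look at the restriction homomorphism $\res\colon \C[\g]^G \to \C[\g^\theta]$. The key observations are: (a) for $g \in G^\theta$ one has $\Ad(g)\g^\theta = \g^\theta$, and for $f \in \C[\g]^G$, $x \in \g^\theta$ one gets $(\res f)(\Ad(g)x) = f(\Ad(g)x) = f(x)$, so $\res$ lands in $\C[\g^\theta]^{G^\theta}$; and (b) Lemma~\ref{lem:dist_coinv}, together with $\C[\g]^\g = \C[\g]^G$, says precisely that $\res$ is surjective onto $\C[\g^\theta]^{\g^\theta}$. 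Hence $\C[\g^\theta]^{\g^\theta} \subseteq \C[\g^\theta]^{G^\theta}$, and together with the previous paragraph this yields the claimed equality.

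The argument is short, since the real content has already been packaged into Lemma~\ref{lem:dist_coinv}; there is no substantial obstacle. The only points that require a little care are the identification $\C[\g^\theta]^{(G^\theta)^\circ} = \C[\g^\theta]^{\g^\theta}$, which uses reductivity of $\g^\theta$, and the (routine) verification that the restriction of a $G$-invariant polynomial is automatically $G^\theta$-invariant.
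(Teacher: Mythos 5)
Your proof is correct and follows essentially the same route as the paper: the nontrivial inclusion $\C[\g^\theta]^{\g^\theta}\subseteq \C[\g^\theta]^{G^\theta}$ is obtained by extending an invariant via the surjectivity in Lemma~\ref{lem:dist_coinv} and using $\C[\g]^\g=\C[\g]^G$ for connected $G$, exactly as in the paper, which simply leaves the easy inclusion (your first paragraph) implicit.
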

\begin{proof}
    We have to show that every $g\in G^\theta$ acts trivially on every $f\in \C[\g^\theta]^{\g^\theta}$. By Lemma \ref{lem:dist_coinv}, $f$ admits an extension $\Tilde{f}\in \C[\g]^\g$ with $\Tilde{f}|_{\g^\theta} = f$. Moreover, since $G$ is connected, we have $\C[\g]^\g = \C[\g]^G$, so $g\cdot \Tilde{f} = \Tilde{f}$. But this implies $g\cdot f  = f$, too.
\end{proof}
The point of the preceding corollary is that it holds even though $G^\theta$ need not be connected. If $\theta$ is not pinning-preserving, the following example shows that $\C[\g^\theta]^{G^\theta}$ can indeed be strictly smaller than $\C[\g^\theta]^{\g^\theta}$.
\begin{expl}
\label{expl:typeAexpl}
    Let $G = \GL_{2n}(\C)$ for some $n\in \N$, and let $\theta$ be inverse-transpose, $\theta(A) = (A^t)\inv$. Then $G^\theta = \O_{2n}(\C)$ with Lie algebra $\g^\theta = \so_{2n}(\C)$. Using the Chevalley restriction theorem, one finds that
    $$
        \C[\so_{2n}(\C)]^{\so_{2n}(\C)}
        \cong
        \C[x_1,\ldots, x_n]^{S_n\ltimes \Z_2^{n-1}}
    $$
    where $S_n\ltimes \Z_2^{n-1}$ acts on the variables by signed permutations with an even number of sign changes. From this ring, we obtain $\C[\so_{2n}(\C)]^{\O_{2n}(\C)}$ by taking into account the component group of $\O_{2n}(\C)$, which has order 2. Its nontrivial element acts on $\C[x_1,\ldots, x_n]^{S_n\ltimes \Z_2^{n-1}}$ by a single sign change, so that
    $$
        \C[\so_{2n}(\C)]^{\O_{2n}}
        \cong 
        \C[x_1,\ldots, x_n]^{S_n\ltimes \Z_2^n},
    $$
    with $S_n\ltimes \Z_2^n$ acting by arbitrary signed permutations. This is a strict subring of $\C[\so_{2n}(\C)]^{\so_{2n}(\C)}$ -- for instance, $x_1x_2\cdots x_n\not \in \C[x_1,\ldots, x_n]^{S_n\ltimes \Z_2^n}$.
\end{expl}
We can now translate back to the setting of real structures:
\begin{cor}
\label{cor:qc_main}
    Let $\sigma$ be a real structure of a connected complex reductive group $G$ with Cartan involution $\theta$. If $\theta$ is pinning-preserving then the restriction
    $
        H^{2*}_G \to H^{2*}_{G^\sigma}
    $
    is surjective and identifies $H^{2*}_{G^\sigma}$ with the coinvariant ring $(H^{2*}_G)_{\sigma^*}$
\end{cor}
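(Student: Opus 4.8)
The plan is to transport the statement across the Chern--Weil identifications and reduce it to Lemma \ref{lem:dist_coinv}. The source ring is already understood: choose a compact real structure $\tau$ of $G$ commuting with $\sigma$, so that $\theta=\sigma\tau=\tau\sigma$ is the given Cartan involution (pinning-preservation of $\theta$ is independent of the choice of $\tau$, being stable under conjugation by any automorphism). The argument behind \eqref{eq:long_iso_base_level}, together with $G$ connected, gives $H^*_G\cong\C[\g]^G=\C[\g]^\g$, and since $\sigma$ and $\theta$ agree on $G^\tau$ (as $\tau$ is trivial there), functoriality of Chern--Weil identifies $\sigma^*$ on $H^*_G$ with $\theta^*$ on $\C[\g]^\g$. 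So the whole claim reduces to understanding $H^*_{G^\sigma}$ and the restriction map out of $H^*_G$.

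The key new step is to identify $H^*_{G^\sigma}$ with $\C[\g^\theta]^{\g^\theta}$, compatibly with restriction. Put $K\coloneqq G^\sigma\cap G^\tau$, a maximal compact subgroup of $G^\sigma$, so $H^*_{G^\sigma}\cong H^*_K$. I claim $K$ is also a maximal compact subgroup of the complex reductive group $G^\theta$: its Lie algebra $\g^\theta$ is reductive by Lemma \ref{lem:fixed_subalgebras}(i) and $G^\theta$ has finitely many components (\eg\ by Steinberg), while $\tau$ commutes with $\theta$, hence preserves $G^\theta$, and restricts there to a compact real structure whose fixed points are $G^\theta\cap G^\tau$. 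For $g\in G^\tau$ one has $\theta(g)=\sigma(\tau(g))=\sigma(g)$, so $G^\theta\cap G^\tau=G^\sigma\cap G^\tau=K$; thus $K$ is a compact real form, hence a maximal compact subgroup, of $G^\theta$. A short computation with the four eigenspaces of the commuting involutions $\sigma,\tau$ shows $\operatorname{Lie}(K)\otimes_\R\C=\g^\theta$ inside $\g$, compatibly with $\operatorname{Lie}(G^\tau)\otimes_\R\C=\g$, so Chern--Weil yields $H^*_{G^\sigma}\cong H^*_K\cong H^*_{G^\theta}\cong\C[\g^\theta]^{G^\theta}$ and carries the geometric restriction $H^*_G\to H^*_{G^\sigma}$ to the polynomial restriction $\C[\g]^\g\to\C[\g^\theta]^{G^\theta}$. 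Finally, Corollary \ref{cor:disconn_inv_ring_nice} gives $\C[\g^\theta]^{G^\theta}=\C[\g^\theta]^{\g^\theta}$.

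With all identifications in place, $H^*_G\to H^*_{G^\sigma}$ becomes the restriction map $\C[\g]^\g\to\C[\g^\theta]^{\g^\theta}$ of Lemma \ref{lem:dist_coinv}, carrying $\sigma^*$ to $\theta^*$. That lemma asserts this map is surjective and induces an isomorphism of the coinvariant ring $\C[\g]^\g_{\theta^*}$ onto $\C[\g^\theta]^{\g^\theta}$; translating back, $H^*_G\to H^*_{G^\sigma}$ is surjective and identifies $H^*_{G^\sigma}$ with $(H^*_G)_{\sigma^*}$, as claimed.

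I expect the main obstacle to be the compatibility bookkeeping in the middle paragraph: confirming that $G^\theta$ is reductive with $K$ as a maximal compact subgroup (so that $H^*_K\cong H^*_{G^\theta}$ and Chern--Weil applies), and that $\operatorname{Lie}(K)$ complexifies to $\g^\theta$ sitting inside $\g$ in the same way that $\operatorname{Lie}(G^\tau)$ complexifies to $\g$ --- this is exactly what makes the Chern--Weil isomorphisms on the two sides genuinely intertwine the inclusion-induced map $H^*_G\to H^*_{G^\sigma}$ with the algebraic restriction $\C[\g]^\g\to\C[\g^\theta]^{\g^\theta}$ and the two involutions $\sigma^*$ and $\theta^*$.
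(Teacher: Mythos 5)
Your proposal is correct and takes essentially the same route as the paper: the paper's proof likewise invokes the compatible isomorphisms $H^*_G\cong\C[\g]^G$ and $H^*_{G^\sigma}\cong\C[\g^\theta]^{G^\theta}$ (as in \eqref{eq:long_iso_base_level}--\eqref{eq:killing_ring_isos}) and then concludes by Lemma \ref{lem:dist_coinv} combined with Corollary \ref{cor:disconn_inv_ring_nice}. You simply make explicit the bookkeeping (that $K=G^\sigma\cap G^\tau=G^\theta\cap G^\tau$ is maximal compact in both $G^\sigma$ and $G^\theta$, and that $\sigma^*$ corresponds to $\theta^*$) which the paper leaves implicit.
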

\begin{proof}
    As in \eqref{eq:long_iso_base_level}--\eqref{eq:killing_ring_isos} we obtain compatible isomorphisms
    $$
        H^{2*}_G \cong \C[\g]^G,
        \quad
        H^{2*}_{G^\sigma} \cong \C[\g^\theta]^{G^\theta},
    $$
    where $\theta$ is the Cartan involution of $\sigma$ (with respect to a suitable compact real structure). The result then follows from Lemma \ref{lem:dist_coinv} in combination with Corollary \ref{cor:disconn_inv_ring_nice}.
\end{proof}
\begin{defn}
\label{def:qcrs}
    A real structure with pinning-preserving Cartan involution (as in Corollary \ref{cor:qc_main}) is called \emph{quasi-compact}.
\end{defn}

Quasi-compact real structures play a key role in this paper due to Corollary \ref{cor:qc_main}. The next Proposition establishes basic facts about them, including the reason for their name.\footnote{While uncommon, the name \emph{quasi-compact} is not new, appearing for example in \cite[Rmk.\,8.3]{AT_2018_Galois_and_Cartan}.}
\begin{prop}
\label{prop:qc_funfacts}
    Let $G$ be a connected complex reductive group. Every inner class of real structures on $G$ contains a quasi-compact real structure and this real structure is unique up to inner-isomorphism. If $G_0$ is the corresponding real form, then the dimension of its maximal compact subgroup is maximal among all real forms in the given inner class.
\end{prop}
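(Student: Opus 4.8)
The strategy is to reduce everything to the Cartan correspondence of Theorem \ref{thm:Cartan_correspondence}, which translates real structures into holomorphic involutions, and then to invoke standard structure theory for the latter. First I would fix a compact real structure $\tau$ of $G$ and use Theorem \ref{thm:Cartan_correspondence} to pass from the given inner class $\mathfrak{S}$ of real structures to the corresponding inner class of holomorphic involutions $\theta\in\Aut_2(\g)$ (resp.\ $\Aut_2(G)$); this is a bijection on inner classes, so it suffices to work on the involution side. The key input is then the classical fact (Gantmacher normal form, \cf\ \parencite[Thm.\,4.2]{Onishchik2003_real}, or \parencite[ch.\,11]{Steinberg1967_Chevalley_groups}) that every inner class of holomorphic involutions of a reductive $\g$ contains a pinning-preserving representative $\eta$: concretely, write the inner class as $\mathrm{Int}(\g)\cdot\theta_0$ for some representative $\theta_0$, choose a $\theta_0$-stable Cartan subalgebra with maximal fixed-point dimension, and conjugate so that $\theta_0$ acts as a diagram automorphism with respect to a suitable pinning. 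Transporting this $\eta$ back through the Cartan correspondence yields a real structure $\sigma$ in $\mathfrak{S}$ whose Cartan involution (with respect to $\tau$) is pinning-preserving, i.e.\ $\sigma$ is quasi-compact by Definition \ref{def:qcrs}.

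For uniqueness up to inner-isomorphism, suppose $\sigma_1,\sigma_2$ are two quasi-compact real structures in $\mathfrak{S}$, with pinning-preserving Cartan involutions $\eta_1,\eta_2$ that are inner to each other. Since a pinning-preserving automorphism is determined up to $\Int(\g)$-conjugacy by the induced permutation of the simple roots (two pinnings of $\g$ are conjugate under $\Int(\g)$, and an inner automorphism preserving a pinning is trivial), and $\eta_1\sim_i\eta_2$ forces these permutations to agree, we get $\eta_1\approx_i\eta_2$. Translating back via Theorem \ref{thm:Cartan_correspondence} (the $\approx_i$ correspondence) gives $\sigma_1\approx_i\sigma_2$.

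Finally, for the statement about the maximal compact subgroup: if $\sigma$ is quasi-compact with Cartan involution $\theta=\sigma\tau$ pinning-preserving, then by Lemma \ref{lem:fixed_subalgebras}(ii)--(iii) a $\theta$-stable Cartan subalgebra $\h$ with $\h^\theta$ of maximal dimension has $\h^\theta$ a Cartan subalgebra of $\g^\theta=(\g^\sigma)\cap(\g^\tau)$, which is the Lie algebra of a maximal compact subgroup $K$ of the real form $G_0=G^\sigma$; in particular $\h^\theta$ has maximal possible rank, namely $\rank_\C\g$ restricted to the $(+1)$-eigenspace is as large as a diagram automorphism permits. For an arbitrary real structure $\sigma'$ in $\mathfrak{S}$ with Cartan involution $\theta'$, one has $\dim K' = \dim\kay'$ where $\kay'=\g^{\theta'}$ is the $(+1)$-eigenspace of $\theta'$, and since $\theta'$ is inner to $\theta$ one reduces (by conjugating $\theta'$ within its inner class to share a Cartan subalgebra and comparing eigenspace dimensions via the $\Int(\g)$-action, which cannot increase the fixed-point dimension past the pinning-preserving value) to the inequality $\dim\g^{\theta'}\le\dim\g^{\theta}$. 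Hence $\dim K'\le\dim K$, as claimed.

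\medskip
The step I expect to be the main obstacle is the last one: making precise \emph{why} the pinning-preserving representative maximises $\dim\g^\theta$ within the inner class, rather than merely being a convenient normal form. One clean route is Kostant's theorem that $\dim\g^\theta$ equals $\tfrac12(\dim\g + \dim\g^\theta_{\mathrm{ss}}\text{-data})$ computed from the eigenvalue pattern on a maximal $\theta$-split torus, combined with the observation that a diagram automorphism has the smallest possible $(-1)$-eigenspace among its inner class; alternatively one can cite the classification of real forms directly. I would likely present the pinning-preserving construction and uniqueness in detail and treat the maximal-compact-subgroup claim by reducing to a dimension count of $\theta$-eigenspaces, citing \parencite[Thm.\,4.2]{Onishchik2003_real} or the tables for the inequality, since a fully self-contained argument would essentially reprove part of the classification of real forms.
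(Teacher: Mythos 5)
Your proposal is correct and follows essentially the same route as the paper: translate via the Cartan correspondence (Theorem \ref{thm:Cartan_correspondence}) to pinning-preserving involutions, obtain existence and uniqueness from the standard description of $\Aut(\g)$, and settle the maximal-compact-dimension claim by the eigenspace/dimension count afforded by Gantmacher's normal form \parencite[Thm.\,4.2]{Onishchik2003_real}. The only difference is one of detail: the paper's proof is purely citation-level, whereas you spell out the uniqueness argument and the reduction of the last claim to $\dim\g^{\theta'}\le\dim\g^{\theta}$, which is consistent with (and a reasonable expansion of) the paper's argument.
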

\begin{proof}
    Existence and uniqueness of the quasi-compact real structure are equivalent via Theorem \ref{thm:Cartan_correspondence} to corresponding statements about pinning-preserving involutions. In turn, these follow from well-known descriptions of $\Aut(\g)$, see \eg\ \cite[ch.\,4]{Onishchik2004_real}. The second statement can be checked using Gantmacher's normal form for automorphisms, \cf \cite[Thm.\,4.2]{Onishchik2004_real}.
\end{proof}
We will also need the following Lemma, which lets us compare the invariant ring of a quasi-compact real form with that of any real form inner to it:
\begin{lem}
\label{lem:comparison_map}
    Let $\sigma$ be a real structure of a connected complex reductive Lie group $G$ with Lie algebra $\g$. There exist a quasi-compact real structure $\sigma_0$ and a compact real structure $\tau$ of $G$ such that
    \begin{itemize}
        \item $\sigma_0$ is inner to $\sigma$,
        \item $\sigma$ and $\sigma_0$ both commute with $\tau$,
        \item there exists a Cartan subalgebra $\h\le \g$ preserved by $\sigma$, $\sigma_0$ and $\tau$, and
        \item $\h^\sigma = \h^{\sigma_0}$.
    \end{itemize}
    Moreover, let $\theta\coloneqq \sigma\tau$ and $\theta_0\coloneqq \sigma_0\tau$ denote the Cartan involutions, and $W(\theta)$, $W(\theta_0)$ the Weyl groups for $(\g^\theta, \h^\theta)$ and $(\g^{\theta_0}, \h^{\theta_0})$.\footnote{Here we are using Lemma \ref{lem:fixed_subalgebras} to see that $\h^\theta$ and $\h^{\theta_0}$ are Cartan subalgebras. Also, $W(\theta_0) = W^{\theta_0}$ is the centraliser of $\theta_0$ in $W$ by Lemma \ref{lem:fixed_subalgebras}(iii), but such a description need not hold for $W(\theta)$.} Then, as subgroups of $\GL(\h^\theta) = \GL(\h^{\theta_0})$, we have
    $$
        W(\theta) \le W(\theta_0),
    $$
    and it follows that $H^{2*}_{G^{\sigma_0}}$ canonically injects into $H^{2*}_{G^\sigma}$.
\end{lem}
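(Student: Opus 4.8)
The plan is to manufacture $\sigma_0$, $\tau$ and $\h$ from a standard normal form of $\sigma$, to extract $W(\theta)\le W(\theta_0)$ from the associated restricted root systems, and then to produce the injection by comparing the invariant-polynomial models of $H^*_{G^{\sigma_0}}$ and $H^*_{G^\sigma}$ from \S\ref{sec:invrings_and_qcrs}. First I would pick a compact real structure $\tau$ commuting with $\sigma$ (standard; cf.\ Theorem~\ref{thm:Cartan_correspondence}) and set $\theta\coloneqq\sigma\tau\in\Aut_2(\g)$, which commutes with both $\sigma$ and $\tau$. Let $\h\le\g$ be the complexification of a fundamental (maximally compact) Cartan subalgebra $\h_0=Z_{\g^\sigma}(\te_0)$ of $\g^\sigma$, where $\te_0$ is a maximal torus of the compact subalgebra $\g^\sigma\cap\g^\tau$; then $\h$ is stable under $\sigma$, $\tau$ and $\theta$, and $\h^\theta=\te_0\otimes_\R\C$ has maximal dimension among $\theta$-stable Cartans, hence is a Cartan subalgebra of $\g^\theta$ by Lemma~\ref{lem:fixed_subalgebras}(ii); write $\kay\coloneqq\h^\theta$. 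Next, fix a $\theta$-stable positive system for $(\g,\h)$ and a pinning $(\h,\Pi,\{X_\alpha\}_{\alpha\in\Pi})$ with $\tau(X_\alpha)=-X_{-\alpha}$, so that $\tau$ is the compact real structure determined by this pinning. The normal-form theory for involutions of reductive Lie algebras (Vogan diagrams / Gantmacher normal form) then yields $\theta=\Int(x)\circ\theta_0$ with $\theta_0\in\Aut_2(\g)$ pinning-preserving for this pinning and $x\in H=Z_G(\h)$; in particular $\theta|_\h=\theta_0|_\h$. I would then set $\sigma_0\coloneqq\theta_0\tau$: this is a real structure, quasi-compact by Definition~\ref{def:qcrs}, commuting with $\tau$, differing from $\sigma$ by $\Int(x^{-1})$ so inner to it, preserving $\h$, and with $\sigma_0|_\h=(\theta_0|_\h)(\tau|_\h)=(\theta|_\h)(\tau|_\h)=\sigma|_\h$, whence $\h^{\sigma_0}=\h^\sigma$. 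This settles the four bulleted properties.

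For the Weyl groups, note that $\theta|_\h=\theta_0|_\h$ gives $\h^{\theta_0}=\h^\theta=\kay$, and that both $W(\theta)=W(\g^\theta,\kay)$ and $W(\theta_0)=W(\g^{\theta_0},\kay)=W^{\theta_0}$ (Lemma~\ref{lem:fixed_subalgebras}(iii)) act faithfully on $\kay$. Since $\kay\subseteq\h$, the root spaces of $\g^\theta$ relative to $\kay$ lie in the $\h$-weight spaces of $\g$, so every root of $(\g^\theta,\kay)$ restricts a root of $(\g,\h)$; thus $R(\g^\theta,\kay)\subseteq\overline R\coloneqq\{\alpha|_\kay:\alpha\in R(\g,\h),\ \alpha|_\kay\neq 0\}$. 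For the pinning-preserving $\theta_0$, the set $\overline R$ is the restricted root system of the pair $(\g,\g^{\theta_0})$ with respect to $\kay$, and its reflections coincide with those of $R(\g^{\theta_0},\kay)$ — the two differ only by proportional vectors, the familiar folding picture, with the $A_{2n}$-type fixed roots restricting to the $2\beta$'s — so $W(\overline R)=W(\g^{\theta_0},\kay)=W(\theta_0)$. Hence $W(\theta)=\langle s_\beta:\beta\in R(\g^\theta,\kay)\rangle\le W(\overline R)=W(\theta_0)$.

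For the injection, Corollary~\ref{cor:qc_main}, Lemma~\ref{lem:dist_coinv}, and the Chevalley isomorphism (Corollary~\ref{cor:crt_extended}) for the reductive $\g^{\theta_0}$ give $H^*_{G^{\sigma_0}}\cong\C[\g^{\theta_0}]^{\g^{\theta_0}}\cong\C[\kay]^{W(\theta_0)}$. On the other side, $G^\sigma$ retracts onto its maximal compact subgroup $K_\sigma\coloneqq G^\sigma\cap G^\tau$ (the fixed points of the Cartan involution $\tau|_{G^\sigma}$), with Lie algebra $\g^\sigma\cap\g^\tau$ complexifying to $\g^\theta$; the fibration $\mathbb{B}(K_\sigma)_0\to\mathbb{B}K_\sigma\to\mathbb{B}\pi_0(K_\sigma)$ over $\C$, the Chern--Weil isomorphism \eqref{eq:Chern-Weil}, and $\C[\g^\theta]^{(K_\sigma)_0}=\C[\g^\theta]^{\g^\theta}\cong\C[\kay]^{W(\theta)}$ yield $H^*_{G^\sigma}\cong\bigl(\C[\kay]^{W(\theta)}\bigr)^{\pi_0(K_\sigma)}$. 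Since $W(\theta)\le W(\theta_0)$, the subring $\C[\kay]^{W(\theta_0)}$ lies in $\C[\kay]^{W(\theta)}$; it remains to see that $\pi_0(K_\sigma)$ fixes it pointwise. Represent a class by $g\in K_\sigma$ normalising $T=\exp\te_0$; then $\Ad(g)$ preserves $\te_0$, hence $Z_{\g^\sigma\cap\g^{-\theta}}(\te_0)$, which by fundamentality of $\h_0$ equals $\h_0\cap\g^{-\theta}$, hence also $\h=\kay\oplus\bigl((\h_0\cap\g^{-\theta})\otimes_\R\C\bigr)$, so $\Ad(g)|_\h\in W$ as $G$ is connected; and $g\in K_\sigma=(G^\sigma)^\theta$ forces $\Ad(g)|_\h$ to commute with $\theta|_\h=\theta_0|_\h$, so $\Ad(g)|_\h\in W^{\theta_0}=W(\theta_0)$. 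Thus $\pi_0(K_\sigma)$ acts on $\C[\kay]^{W(\theta)}$ through $W(\theta_0)$, hence trivially on $\C[\kay]^{W(\theta_0)}$, and $H^*_{G^{\sigma_0}}\cong\C[\kay]^{W(\theta_0)}\hookrightarrow\bigl(\C[\kay]^{W(\theta)}\bigr)^{\pi_0(K_\sigma)}\cong H^*_{G^\sigma}$ is the asserted canonical injection.

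The main difficulty is the first step: realising $\theta=\Int(x)\theta_0$ with $x\in H$ while keeping the prescribed compact structure $\tau$ pinning-adapted — a simultaneous normalisation of $\theta$ and $\tau$ that rests on the classification of involutions of reductive Lie algebras (cf.\ \parencite[ch.~4]{Onishchik2003_real}). A secondary technical point is the identification $W(\overline R)=W(\theta_0)$, where the $A_{2n}$-folding subtlety must be handled.
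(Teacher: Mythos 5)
Your proposal is correct, and it reaches the lemma by a partly different route than the paper. The paper obtains $\sigma_0$ abstractly: it invokes Proposition \ref{prop:qc_funfacts} for a quasi-compact structure in the inner class, a commuting compact structure, and then conjugates by inner automorphisms to arrange the Cartan conditions; you instead build $\sigma_0$ explicitly from the fundamental Cartan $\h_0$ of $\g^\sigma$ and the factorisation $\theta=\Int(x)\theta_0$ with $x\in H$ and $\theta_0$ pinning-preserving, which has the advantage that $\theta|_\h=\theta_0|_\h$ holds on the nose (and the step you flag as the main difficulty is in fact elementary once the positive system is chosen $\theta$-stable, which your fundamental-Cartan choice guarantees since there are no real roots). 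For $W(\theta)\le W(\theta_0)$ the two arguments are essentially the same: the paper checks that roots of $(\g^\theta,\h^\theta)$ are, up to rescaling, roots of $(\g^{\theta_0},\h^{\theta_0})$ via Gantmacher normal forms, while you route through the folded restricted root system $\overline R$ and the identity $W(\overline R)=W(\theta_0)$, with the $A_{2n}$ non-reducedness noted; both work. The genuinely different step is the injection: the paper identifies $H^*_{G^\sigma}$ with $\C[\h^\theta]^{N_{G^\theta}(\h^\theta)}$ and shows the component group acts trivially on $\C[\h^\theta]^{W(\theta_0)}$ by extending such invariants to $\C[\g]^\g=\C[\g]^G$ (Lemma \ref{lem:dist_coinv}, as in the proof of Corollary \ref{cor:disconn_inv_ring_nice}) and using connectedness of $G$, whereas you show directly that every component of $K_\sigma$ has a representative normalising $T$ whose adjoint action preserves $\h$, lies in $W$, and commutes with $\theta_0|_\h$, hence acts on $\h^\theta$ through $W(\theta_0)$; your normaliser argument is longer but more explicit and yields the slightly finer fact that $\pi_0(K_\sigma)$ acts on $\C[\h^\theta]^{W(\theta)}$ through $W(\theta_0)$. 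Two small points to tighten: the appeal to Lemma \ref{lem:fixed_subalgebras}(ii) requires the (true, but unjustified) maximality of $\dim\h^\theta$ among \emph{all} $\theta$-stable Cartan subalgebras of $\g$ -- it is quicker to note $\g^\theta=(\g^\sigma\cap\g^\tau)\otimes_\R\C$, so $\te_0\otimes_\R\C$ is a Cartan subalgebra of $\g^\theta$ because $\te_0$ is maximal abelian in $\g^\sigma\cap\g^\tau$; and ``$W^{\theta_0}=W(\theta_0)$'' should be phrased as ``restriction to $\h^{\theta_0}$ identifies $W^{\theta_0}$ with $W(\theta_0)$'' (Lemma \ref{lem:fixed_subalgebras}(iii)).
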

\begin{proof}
    The existence of $\sigma_0$ is rather standard: the first requirement can be achieved via Proposition \ref{prop:qc_funfacts}, and the second via a variant of \cite[Prop.\,3.7]{Onishchik2004_real}. The third and fourth conditions can be incorporated by conjugation with an appropriate inner automorphism. For the inclusion of Weyl groups, it suffices to check that each element of the root system of $(\g^\theta, \h^\theta)$ is -- up to nonzero rescaling -- contained in that of $(\g^{\theta_0}, \h^{\theta_0})$; this can be done using Gantmacher normal forms, \cf \cite[Thm.\,4.2]{Onishchik2004_real}. 
    
    Finally, via \eqref{eq:long_iso_base_level}--\eqref{eq:killing_ring_isos} and Corollary \ref{cor:crt_extended} we obtain a diagram
    $$
    \begin{tikzcd}
        H^{2*}_{G^{\sigma_0}}
            \arrow[d, "\cong"]
            \arrow[r, dashrightarrow]
        & H^{2*}_{G^\sigma}
            \arrow[d, "\cong"] \\
        \C[\g^{\theta_0}]^{G^{\theta_0}}
            \arrow[r, dashrightarrow]
            \arrow[d, "\cong"]
        & \C[\g^{\theta}]^{G^{\theta}}
            \arrow[d, "\cong"] \\
        \C[\h^\theta]^{N_{G^{\theta_0}}(\h^\theta)}
           \arrow[r, dashrightarrow]
        & \C[\h^\theta]^{N_{G^{\theta}}(\h^\theta)}
    \end{tikzcd}
    $$
    in which the dashed arrows are yet to be constructed -- of course it suffices to construct one of them. The rings in the bottom row are the subrings of $\C[\h^\theta]^{W(\theta_0)}$ resp.\;$\C[\h^\theta]^{W(\theta)}$ invariant under the actions of the relevant component groups. But, by the same argument as in the proof of Corollary \ref{cor:disconn_inv_ring_nice}, we see that these component groups both act trivially on $\C[\h^\theta]^{W(\theta_0)}$. Together with the containment $W(\theta)\le W(\theta_0)$, this lets us put a canonical injection in the bottom row of the diagram above, finishing the proof. 
\end{proof}

\section{Proof of main theorem}
\label{sec:main_proof}
We now come to the proof of Theorem \ref{thm:main}. It is structured into three lemmas, followed by a main body combining everything. The first two lemmas reduce from the semisimple to the simple case; the third gives a construction of the appropriate real structure in almost all simple types. (For the one remaining type, $A_{2n}$, we supply ad hoc arguments.)
\begin{lem}
\label{lem:kirillov_decomp}
    Let $\g$ be a complex semisimple Lie algebra, and let $\g\cong \g_1 \oplus \cdots \oplus \g_\ell$ be its decomposition into simple complex Lie algebras $\g_i$. Under this isomorphism, a minuscule weight $\lambda$ of $\g$ decomposes as a sum $\lambda = \lambda_1 \oplus \cdots \oplus \lambda_\ell$ of minuscule weights of the $\g_i$. Moreover, the Kirillov algebra $\Kir^\lambda(\g)$ decomposes as
    $$
        \Kir^\lambda(\g)
        \cong
        \Kir^{\lambda_1}(\g_1) 
        \otimes \cdots \otimes
        \Kir^{\lambda_\ell}(\g_\ell).
    $$
\end{lem}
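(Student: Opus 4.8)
The plan is to reduce everything to two structural facts about a semisimple Lie algebra splitting as $\g = \g_1 \oplus \cdots \oplus \g_\ell$: that its finite-dimensional irreducible representations are external tensor products of irreducibles of the factors, and that the symmetric algebra functor turns direct sums into tensor products. So first I would fix compatible Cartan subalgebras, $\h = \h_1 \oplus \cdots \oplus \h_\ell$ with $\h_i \le \g_i$, a choice of positive roots for each $\g_i$, and write $W = W_1 \times \cdots \times W_\ell$ for the Weyl group acting componentwise.

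Next I would treat the weight. A dominant integral weight $\lambda \in \h^*$ is the same datum as a tuple $(\lambda_1, \ldots, \lambda_\ell)$ of dominant integral weights of the $\g_i$, and correspondingly $V^\lambda \cong V^{\lambda_1} \otimes \cdots \otimes V^{\lambda_\ell}$ as a $\g$-module, with $\g_i$ acting via $\rho^{\lambda_i}$ on the $i$-th factor and trivially on the others; in particular, under $\h^* = \bigoplus_i \h_i^*$, the set $\wt(\lambda)$ consists of the tuples $(\mu_1, \ldots, \mu_\ell)$ with $\mu_i \in \wt(\lambda_i)$, while $W \cdot \lambda$ consists of such tuples with $\mu_i \in W_i \cdot \lambda_i$. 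Testing the inclusion $\wt(\lambda) \subseteq W \cdot \lambda$ — the second characterisation of minusculity recalled before Corollary \ref{cor:geom_model} — on tuples that equal $\lambda_j$ in every coordinate $j \ne i$ shows it holds exactly when $\wt(\lambda_i) \subseteq W_i \cdot \lambda_i$ for every $i$. Hence $\lambda$ is minuscule if and only if each $\lambda_i$ is a minuscule weight of $\g_i$ (allowing $\lambda_i = 0$, for which $\Kir^{0}(\g_i) = S(\g_i)^{\g_i}$).

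For the algebra isomorphism I would combine the graded algebra isomorphism $S(\g) \cong S(\g_1) \otimes \cdots \otimes S(\g_\ell)$, which is $\g$-equivariant when $\g_i$ acts by derivations on the $i$-th tensor factor alone, with $\End(V^\lambda) \cong \End(V^{\lambda_1}) \otimes \cdots \otimes \End(V^{\lambda_\ell})$, which is $\g$-equivariant for the analogous action (using $V^\lambda = V^{\lambda_1} \otimes \cdots \otimes V^{\lambda_\ell}$). This gives a $\g$-equivariant isomorphism of graded algebras
\[
    S(\g) \otimes \End(V^\lambda) \;\cong\; \bigotimes_{i=1}^{\ell} \bigl( S(\g_i) \otimes \End(V^{\lambda_i}) \bigr),
\]
where on the right the subalgebra $\g_i \le \g$ acts diagonally on the $i$-th tensor factor and trivially on the rest. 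It then remains to take $\g$-invariants, and here the only slightly delicate point is that invariants commute with this tensor product. Because $\g_j$ acts trivially on $S(\g_i) \otimes \End(V^{\lambda_i})$ for $j \ne i$, the $\g_i$-invariant subspace of the right-hand side is $\bigl(S(\g_i) \otimes \End(V^{\lambda_i})\bigr)^{\g_i}$ tensored with the remaining factors left untouched — this uses the elementary fact that $(M \otimes N)^{\mathfrak{a}} = M^{\mathfrak{a}} \otimes N$ for a Lie algebra $\mathfrak{a}$ acting on $M$ only and $N$ an arbitrary vector space (pick a basis of $N$). Intersecting these subspaces over $i = 1, \ldots, \ell$ identifies $\bigl(\bigotimes_i (S(\g_i) \otimes \End(V^{\lambda_i}))\bigr)^{\g}$ with $\bigotimes_i (S(\g_i) \otimes \End(V^{\lambda_i}))^{\g_i} = \bigotimes_i \Kir^{\lambda_i}(\g_i)$. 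All maps used preserve the grading, so the identification is one of graded algebras; it also carries the subring $S(\g)^\g$ onto $\bigotimes_i S(\g_i)^{\g_i}$, which is what makes the lemma useful for reducing Theorem \ref{thm:main} to the simple case. (If one prefers to avoid the infinite dimension of $S(\g_i)$, one can run the invariants argument separately in each fixed polynomial degree, where everything is finite-dimensional.)

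I do not expect a genuine obstacle: the statement is essentially formal once $V^\lambda$ is recognised as an external tensor product and $S$ as sending $\oplus$ to $\otimes$; the only thing to keep track of is which summand $\g_i$ of $\g$ acts on which tensor factor when passing to invariants, together with the routine check that the isomorphisms in play are compatible with the gradings.
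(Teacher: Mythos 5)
Your proposal is correct and follows essentially the same route as the paper's (much terser) proof: decompose $V^\lambda$ as an external tensor product via highest weight theory, note that $S(\g)$ and $\End(V^\lambda)$ both factor compatibly, and pass to invariants factorwise. The extra care you take with the minusculity of the components and with invariants commuting with the tensor product is sound but fills in details the paper leaves implicit.
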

\begin{proof}
    The decomposition of $\lambda$ is a simple consequence of highest weight theory, which also yields $V^\lambda \cong V^{\lambda_1}\otimes \cdots \otimes V^{\lambda_\ell}$ (with $\g_i$ acting on the $i$-th tensor factor). It follows that both factors in $S(\g)\otimes \End(V^\lambda)$ decompose as tensor products compatibly with the decomposition of $\g$; hence, so does the Kirillov algebra.
\end{proof}

\begin{lem}
\label{lem:realstr_decomp}
    Let $G$ be a connected complex semisimple Lie group of adjoint type\footnote{This ensures that $G$ decomposes into simple factors.}, and let $G\cong G_1 \times \cdots \times G_\ell$ be its decomposition into simple factors $G_i$. Let $\sigma$ be a real structure on $G$. Then $\sigma$ permutes the $G_i$ with orbits of one or two elements. For each $i$, there are two possibilities:
    \begin{itemize}
        \item[(a)] If $\sigma(G_i) = G_i$, then $\sigma$ restricts to a real structure of $G_i$.
        \item[(b)] If $\sigma(G_i) = G_j$ with $i\ne j$, then $G_j\cong G_i$ and the Cartan involution of $\sigma|_{G_i\times G_j}$ is isomorphic to the \emph{swap involution}
        $$
            G_i\times G_i \to G_i \times G_i,
            \quad
            (g,h) \mapsto (h,g).
        $$
    \end{itemize}
\end{lem}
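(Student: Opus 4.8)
The plan is to proceed in three stages: first show that $\sigma$ merely permutes the simple factors, then dispose of a fixed factor (case (a)), and finally analyse a swapped pair (case (b)), which is the only part needing real work. For the permutation: since $G$ is of adjoint type, the $G_i$ are precisely the minimal nontrivial closed connected normal subgroups of $G$ — an intrinsic characterisation — so the automorphism $\sigma$ permutes them, inducing a permutation of $\{1,\dots,\ell\}$; because $\sigma^2=\id$ this permutation has order dividing two, hence orbits of one or two elements. Case (a) is immediate: if $\sigma(G_i)=G_i$ then $\sigma|_{G_i}$ is a group automorphism of $G_i$, antiholomorphic as the restriction of an antiholomorphic map to the closed complex submanifold $G_i$, and it squares to $\id$; an antiholomorphic self-map of a positive-dimensional complex manifold is never the identity, so $\sigma|_{G_i}$ has order exactly two and is a real structure of $G_i$.

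For case (b), suppose $\sigma(G_i)=G_j$ with $i\neq j$; then $\sigma(G_j)=G_i$, the subgroup $H\coloneqq G_i\times G_j$ is $\sigma$-stable, and $\sigma|_H$ is a real structure of $H$. First I would observe that $\sigma|_{G_i}\colon G_i\to G_j$ is an antiholomorphic isomorphism, so composing it with a compact real structure of $G_i$ (which exists by Theorem \ref{thm:Cartan_correspondence}) yields a holomorphic isomorphism $G_i\cong G_j$; in particular $G_j\cong G_i$. Fixing such an isomorphism, I identify $H$ with $G_i\times G_i$. Since $\sigma|_H$ interchanges the two factors and is a homomorphism of order two, an elementary computation shows that in this identification it has the normal form $(g,h)\mapsto(c\inv(h),c(g))$ for a uniquely determined antiholomorphic automorphism $c$ of $G_i$.

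Next I would pick any compact real structure $\tau_1$ of $G_i$, set $\tau(g,h)\coloneqq(\tau_1(g),\,c\tau_1 c\inv(h))$, and check that $\tau$ is a compact real structure of $G_i\times G_i$ commuting with $\sigma|_H$; compactness of its fixed subgroup follows since $\tau$ is conjugate to $\tau_1\times\tau_1$ by an automorphism of the underlying real Lie group. The associated Cartan involution $\theta\coloneqq(\sigma|_H)\circ\tau$ then simplifies, using $\tau_1^2=\id$ (so that $\tau_1 c\inv=d\inv$), to $\theta(g,h)=(d\inv(h),d(g))$ with $d\coloneqq c\tau_1\in\Aut(G_i)$ holomorphic; conjugating $\theta$ by the holomorphic automorphism $(g,h)\mapsto(d(g),h)$ of $G_i\times G_i$ produces exactly the swap involution. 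Hence $\theta$ is isomorphic to the swap, and since the Cartan involution attached to $\sigma|_H$ is well-defined up to isomorphism by Theorem \ref{thm:Cartan_correspondence}, the claim of (b) follows.

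The main obstacle is more a matter of care than of difficulty: one must track the interplay of holomorphic and antiholomorphic maps throughout case (b) — the identification $G_i\cong G_j$ has to be holomorphic for the normal form of $\sigma|_H$ to hold, the auxiliary $\tau$ must be genuinely \emph{compact}, and the final conjugating automorphism must be holomorphic so that the resulting equivalence is an isomorphism of involutions in the sense of Definition \ref{def:real_str_equiv_classes}. Beyond this bookkeeping I do not expect any genuine complication.
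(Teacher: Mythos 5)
Your proof is correct and follows essentially the same route as the paper: the simple factors are permuted by intrinsic/simplicity considerations, and in case (b) your compact structure $\tau_1\times c\tau_1 c^{-1}$ is exactly the paper's $\tau_i\times(\sigma\tau_i\sigma)$ in coordinates, with the Cartan involution then identified with the swap. Your version is just more explicit (normal form for $\sigma|_H$ and the conjugating automorphism written out), which is fine.
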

\begin{proof}
    For all $i,j$, the intersection $\sigma(G_i)\cap G_j$ is either trivial or all of $G_j$ by simplicity, so $\sigma$ indeed permutes the factors. The statement about orbit sizes and part (a) are clear. For part (b), choose any compact real structure $\tau_i$ of $G_i$ and observe that $\sigma\tau_i\sigma$ is a compact real structure of $G_j$ -- indeed, its fixed point subgroup is isomorphic via $\sigma$ to the compact Lie group $G_i^{\tau_i}$. The product $\tau \coloneqq \tau_i\times (\sigma\tau_i\sigma)$ is then a compact real structure of $G_i\times G_j$ which commutes with $\sigma$. By construction, the Cartan involution $\theta\coloneqq \sigma \tau \in \Aut_2(G_i\times G_j)$ maps $G_i$ to $G_j$, and the resulting $G_i\cong G_j$ can be used to identify $\theta$ with the swap involution up to isomorphism.
\end{proof}

\begin{lem}
\label{lem:qcrf_extension}
    Let $\g$ be a simple complex Lie algebra with a minuscule coweight $\lambda$, and let $\el_\lambda$ be the corresponding Levi subalgebra (\ie\ the centraliser of $\lambda$). Let $\mathfrak{S}$ be an inner class of real structures on $\g$ which preserve the $\Ad(\g)$-orbit of $\lambda$. If $\g$ is not of type $A_{2n}$ ($n\in \N$), then $\mathfrak{S}$ contains a real structure $\sigma$, unique up to inner-isomorphism, for which
    \begin{itemize}
        \item[(i)] $\sigma_*\lambda = \lambda$, and
        \item[(ii)] $\sigma|_{\el_\lambda}$ is quasi-compact.
    \end{itemize}
\end{lem}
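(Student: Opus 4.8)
The plan is to establish existence and uniqueness separately, reducing both to statements about pinning-preserving involutions via the Cartan correspondence (Theorem \ref{thm:Cartan_correspondence}) and then doing a type-by-type check of the simple Lie algebras admitting a minuscule coweight. For \emph{existence}, I would start with the quasi-split representative: by Remark \ref{rmk:notions_of_preserving}(2), the inner class $\mathfrak{S}$ contains a quasi-split real structure $\sigma^{qs}$ preserving a chosen Cartan subalgebra $\h$ and a Borel $\b$ with $\lambda$ dominant, and satisfying $\sigma^{qs}_*\lambda = \lambda$. Write $\sigma^{qs} = \eta\sigma^s$ with $\sigma^s$ split and $\eta$ a pinning-preserving holomorphic involution. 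The issue is that $\sigma^{qs}$ need not be quasi-compact on $\el_\lambda$. The idea is to modify $\sigma^{qs}$ by an inner automorphism of $\g$ that \emph{preserves} $\el_\lambda$ (so condition (i) is not disturbed, since such an inner automorphism normalises $\lambda$ up to $W_\lambda$) in such a way that the restriction to $\el_\lambda$ becomes the quasi-compact representative of its own inner class on $\el_\lambda$. Concretely, I would choose a compact real structure $\tau$ of $\g$ commuting with $\sigma^{qs}$ and preserving $\h$, pass to the Cartan involution $\theta = \sigma^{qs}\tau$, and then conjugate within $\Int(\el_\lambda) \le \Int(\g)$ to bring $\theta|_{\el_\lambda}$ into pinning-preserving form for a pinning of $\el_\lambda$ extending the torus part. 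The key point to verify is that such a conjugation can be chosen to still preserve $\lambda$ (equivalently, to lie in $\Int(\el_\lambda)$, which fixes the centre of $\el_\lambda$ and hence $\lambda$); this is where the explicit structure of $\el_\lambda$ for minuscule $\lambda$ — always a Levi of the form with exactly one simple root removed — is used, and where the exclusion of type $A_{2n}$ matters, since there the relevant "folding" obstruction is genuine.

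For \emph{uniqueness up to inner-isomorphism}, suppose $\sigma_1, \sigma_2 \in \mathfrak{S}$ both satisfy (i) and (ii). By the Cartan correspondence it is enough to show their Cartan involutions are inner-isomorphic. Both restrict to quasi-compact real structures on $\el_\lambda$, and by Proposition \ref{prop:qc_funfacts} the quasi-compact real structure in a given inner class of $\el_\lambda$ is unique up to $\Int(\el_\lambda)$-conjugacy; combined with the fact that $\sigma_1, \sigma_2$ lie in the same inner class of $\g$ and both fix $\lambda$ (so they lie in the same inner class \emph{of $\el_\lambda$}, because the outer part is detected on the Dynkin diagram of $\el_\lambda$ together with the action on the relevant fundamental coweight), this forces $\sigma_1|_{\el_\lambda}$ and $\sigma_2|_{\el_\lambda}$ to be $\Int(\el_\lambda)$-conjugate. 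It then remains to propagate this conjugacy from $\el_\lambda$ to all of $\g$: having matched the restrictions to the Levi, the two real structures differ by an element of $\Int(\g)$ that is trivial on $\el_\lambda$, and one checks — again using the specific shape of the minuscule $\el_\lambda$, in particular that its semisimple part together with the grading it induces on $\g/\el_\lambda$ rigidifies the extension — that this element can be absorbed. This propagation step is really the heart of the argument.

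I expect the main obstacle to be exactly this last point: controlling how an automorphism behaves on $\g$ given its behaviour on the Levi $\el_\lambda$, i.e.\ showing that the "quasi-compact on the Levi" condition (ii) together with (i) pins down the real structure on $\g$ rigidly enough. The cleanest route is probably to argue type-by-type using the classification of minuscule coweights (types $A_n$ for all fundamental coweights, $B_n$/$D_n$ for the spin/vector coweights, $C_n$ for the first, $E_6$ and $E_7$ for their two minuscule coweights), computing in each case $\el_\lambda$, its inner classes, and the possible $\g$-level real structures, and verifying existence and uniqueness by inspection. Type $A_{2n}$ is excluded precisely because there the diagram automorphism interacts with the minuscule coweight in a way that breaks uniqueness (or forces $\sigma_*\lambda \ne \lambda$ for the would-be quasi-compact-on-Levi representative), which is why the paper handles it separately. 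Throughout, one uses Lemma \ref{lem:fixed_subalgebras} to know that Cartan subalgebras of fixed subalgebras behave well, and Gantmacher normal form (as cited in the proof of Lemma \ref{lem:comparison_map}) to normalise the relevant involutions.
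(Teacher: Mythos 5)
Your existence argument has a genuine gap. You propose to take the quasi-split representative $\sigma^{qs}\in\mathfrak{S}$ and then \emph{conjugate} by an element of $\Int(\el_\lambda)$ so that the restriction to $\el_\lambda$ becomes pinning-preserving. But conjugation by $\Int(\el_\lambda)$ preserves the (inner-)isomorphism class of $\sigma^{qs}|_{\el_\lambda}$ as a real structure of $\el_\lambda$, so it can never produce a quasi-compact restriction unless the restriction is already quasi-compact --- and typically it is not: for $\g$ of type $A_3$ with $\lambda$ the middle fundamental coweight and $\mathfrak{S}$ the split inner class, the split structure restricts on the Levi to $\sl_2(\R)\oplus\sl_2(\R)$, while the $\sigma$ required by Lemma \ref{lem:qcrf_extension} (the one with fixed form $\su^*(4)$) restricts to $\su(2)\oplus\su(2)$. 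One must therefore genuinely move within the inner class, i.e.\ \emph{multiply} $\sigma^{qs}$ by an inner automorphism of $\el_\lambda$ extended to $\g$; the paper does this explicitly with $\Tilde{\varphi}=\exp\big(\ad(\tfrac{\pi}{2}(\mathbf{e}-\mathbf{f}))\big)$ built from a principal $\sl_2$-triple of $\el_\lambda$. The actual difficulty, which your proposal never addresses, is that the modified map is then only a priori of finite order: one has to prove $\Tilde{\varphi}^2=\id$ on all of $\g$, which the paper does by an $\sl_2$-weight computation on the string through the root vector $e_1$, and which succeeds exactly when the Coxeter number of $\g$ is even. That parity condition --- not a ``folding obstruction'' --- is why type $A_{2n}$ is excluded. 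The point you flag as key (that the conjugation fixes $\lambda$) is automatic and is not where the work lies.

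The uniqueness half is closer to the paper's argument: match the restrictions to $\el_\lambda$ via uniqueness of the quasi-compact structure (Proposition \ref{prop:qc_funfacts}), then absorb the remaining discrepancy, an inner automorphism trivial on $\el_\lambda$. But you leave exactly this absorption step --- which you yourself call the heart of the argument --- unproved, whereas in the paper it is a short concrete computation: after normalising, $\psi=\theta\theta'$ fixes $\h\subseteq\el_\lambda$ pointwise and all simple roots, hence $\psi=\exp(\ad t)$ with $t\in\h$; triviality on $\el_\lambda$ forces $t$ to be proportional to $\lambda$, and since $\theta(t)=-t$ one gets $\theta'=\exp(\ad(-t/2))\,\theta\,\exp(\ad(t/2))$. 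The type-by-type inspection you offer as a fallback is legitimate in principle (cf.\ Remark \ref{rmk:Satake_check}), but you do not carry it out, so as written neither existence nor uniqueness is established.
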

\begin{proof}
    We may identify $\lambda$ with the fundamental coweight of a simple root $\alpha_1$ with respect to a Borel subalgebra $\b\le \g$ and Cartan subalgebra $\h\le \b$. Denote the remaining simple roots by $\alpha_2,\ldots, \alpha_r$ and extend this data to a pinning by choosing root vectors $e_i\in \g_{\alpha_i}$. These choices yield a set of standard generators $e_i,f_i,h_i$, which define a canonical Chevalley involution (also known as Weyl involution) $\omega\colon \g\to \g$ with
    $$
        \omega(h_i) = -h_i,
        \quad
        \omega(e_i) = - f_i,
        \quad
        \omega(f_i) = - e_i.
    $$
    as well as a canonical split real structure $\sigma_s$ which fixes the generators. Their composition $\tau\coloneqq \sigma\omega$ is a canonical compact real structure. (For details see \cite[p.\,18]{Onishchik2004_real}.)

    With this setup, $\el_\lambda$ inherits the standard generators $e_i, f_i, h_i$, $i\ge 2$; in particular $\omega$ restricts to $\el_\lambda$ as its canonical Chevalley involution. Following \cite[ch.\,4]{Onishchik2004_real}, we explicitly relate $\omega|_{\el_\lambda}$ to a pinning-preserving involution in its inner class. Let $\mathbf{h} = \sum_{i=2}^r r_i h_i$ be the dual of the sum of positive roots of $\el_\lambda$. (Here the coefficients $r_i$ are certain natural numbers, listed for instance in \cite[p. 78, Table 4]{Onishchik2004_real}.) Extend it to an $\sl_2$-triple by $\mathbf{e}\coloneqq \sum_{i=2}^r \sqrt{r_i}e_i$ and $\mathbf{f}\coloneqq \sum_{i=2}^r \sqrt{r_i}f_i$. It is not hard to show that $\varphi \coloneqq \exp(\ad(\tfrac{\pi}{2}(\mathbf{e}-\mathbf{f}))) \in \Int(\el_\lambda)$ commutes with $\omega|_{\el_\lambda}$ and that $\omega|_{\el_\lambda} \varphi$ is a pinning-preserving involution of $\el_\lambda$. Being an inner automorphism, $\varphi$ has an obvious extension $\Tilde{\varphi}\in \Int(\g)$ which commutes with $\omega$ and $\tau$, and fixes $\lambda$. We claim that $\Tilde{\varphi}^2 = \id$.  

    Since $\varphi^2 = \id_{\el_\lambda}$, it suffices to show that $\Tilde{\varphi}^2(e_1) = e_1$. (Indeed, any automorphism of $\g$ is uniquely determined by its action on $\h$ and the $e_i$, see \eg\ \cite[Thm.\,14.2]{Humphreys1972_LART}.) To this end, observe that $e_1$ is a lowest weight vector for an $\sl_2$-invariant subspace $V$ of $\g$ with respect to the $\sl_2$-triple $(\mathbf{e}, \mathbf{f}, \mathbf{h})$. Lifting the representation to $\SL_2$ identifies $\Tilde{\varphi}^2$ with the action of $\begin{psmallmatrix}
        -1 & 0\\
        0 & -1
    \end{psmallmatrix}$, so we have to show that the lowest weight of $V$ is even, or equivalently that $\dim V$ is odd. Since $\lambda$ is minuscule, the highest root $\alpha_{max}$ of $\g$ is of the form $\alpha_1 + \sum_{i=2}^r n_i\alpha_i$ for $n_i\in \N$. Moreover, it is clear that $\ad(\mathbf{e})^n (e_1) \in \g_{\alpha_{max}}$ for $n\coloneqq \sum_{i=2}^r n_i$. On the other hand, it is also clear that $\ad(\mathbf{f})^n$ maps $\g_{\alpha_{max}}$ to $\g_{\alpha_1}$. These observations combine to show that $\g_{\alpha_{max}}$ is contained in $V$ as its highest weight space, and that $\dim V = n + 1$ -- the height of $\alpha_{max}$. This number is known to equal $c-1$ where $c$ is the Coxeter number (see \eg\ \cite[Prop.\,VI.31]{Bourbaki_4to6}). Altogether, we have $\Tilde{\varphi}^2 = \id$ provided that the Coxeter number of $\g$ is even, which is guaranteed by the exclusion of type $A_{2n}$.

    If $\sigma_s\in \mathfrak{S}$, we may now take $\sigma\coloneqq \sigma_s\Tilde{\varphi}$, which indeed fixes $\lambda$ and whose Cartan involution $\omega \varphi$ preserves a pinning of $\el_\lambda$. In general, the assumption on $\mathfrak{S}$ implies that there exists an involution $\eta\in \Aut(\g)$, preserving the chosen pinning, fixing $\lambda$, and commuting with $\omega$ and $\sigma_s$, such that $\eta\sigma_s\in \mathfrak{S}$ (\cf\ proof of Proposition \ref{prop:sigma_action_translated}). Then $\eta$ commutes also with $\Tilde{\varphi}$ and we may take $\sigma\coloneqq \eta \sigma_s \Tilde{\varphi}$. This proves existence.

    As for uniqueness, suppose now that $\sigma'$ is any (other) real structure with the required properties. Up to conjugation by inner automorphisms, we may assume that $\sigma'$ preserves the previously chosen Cartan subalgebra $\h$ and commutes with $\tau$, and that the Cartan involution $\theta' \coloneqq \sigma'\tau$ preserves the chosen pinning of $\el_\lambda$. But since $\theta'$ is inner to $\theta\coloneqq\sigma\tau$, the same holds for their restrictions to $\el_\lambda$. Then, since both preserve the same pinning, we find $\theta'|_{\el_\lambda} = \theta|_{\el_\lambda}$. The inner automorphism $\psi\coloneqq \theta\theta'$ then normalises $\h$ and fixes all simple roots, whence $\psi = \exp(\ad t)$ for some $t\in \h$. Since $\psi$ also fixes $\el_\lambda$ pointwise, $t$ must be a scalar multiple of $\lambda\in \h$. But this yields
    $$
        \theta'
        =
        \theta \psi
        =
        \theta \exp (\ad t)
        =
        \exp(\ad (-t/2)) \,  \theta  \, \exp(\ad(t/2)),
    $$
    so $\theta'$ is inner-isomorphic to $\theta$ as we needed to show.
\end{proof}

\begin{rmk}[Satake diagrams]
\label{rmk:Satake_check}
    One can also verify the lemma using Satake diagrams. As before, we can identify $\lambda$ with the fundamental coweight of a simple root. We may then restrict to real structures $\sigma\in \mathfrak{S}$ which preserve $\h$ and for which $\h^\sigma$ is maximally split.\footnote{\ie\ $\dim(\h^\theta)$ is minimal possible where $\theta$ denotes the Cartan involution.} These are classified up to inner-isomorphism by Satake diagrams \cite{Araki1962_classification}, which consist of the Dynkin diagram of $\g$ together with a choice of painted nodes and an involutive permutation of the unpainted nodes, indicated by arrows. 
    
    The assumption that the elements of $\mathfrak{S}$ preserve the $\Ad(\g)$-orbit of $\lambda$ implies that the node representing $\lambda$ has no arrow attached (in any of the Satake diagrams resulting from $\mathfrak{S}$). One then verifies that condition (i) is equivalent to that node being unpainted. Moreover, deleting an unpainted node with no arrow from a Satake diagram yields the Satake diagram for the restricted real structure on the corresponding Levi subalgebra. Thus, it suffices to check that there is exactly one Satake diagram corresponding to a real structure in $\mathfrak{S}$ such that the node for $\lambda$ is unpainted and such that deleting that node yields the Satake diagram of a quasi-compact real structure of $\el_\lambda$. This is indeed the case whenever $\g$ is not of type $A_{2n}$, as can be verified using the tables in Appendix \ref{sec:uniqueness_and_tables}. The advantage of this approach is that it provides the inner-isomorphism type of $\sigma$ via its Satake diagram.
\end{rmk}

\begin{proof}[Proof of Theorem \ref{thm:main}]
    Since $G$ is simply connected, its Langlands dual $G^\vee$ is of adjoint type and Lemmas \ref{lem:kirillov_decomp} and \ref{lem:realstr_decomp} apply. Clearly, the permutation of simple factors of $G^\vee$ is the same for all $\sigma\in \mathfrak{S}$. By restricting to its orbits, the theorem is reduced to three cases:
    \begin{enumerate}
        \item[(a)] $\g$ is simple and not of type $A_{2n}$
        \item[(b)] $\g\cong \sl_{2n+1}$ for some $n\in \N$.
        \item[(c)] $\g\cong \g_s \oplus \g_s$ for $\g_s$ simple, and the real structures in $\mathfrak{S}$ swap the two copies of $\g_s$.
    \end{enumerate}
    In each case, we now give a real structure $\sigma\in \mathfrak{S}$ for which $\sigma_*\lambda = \lambda$ and such that the identity \eqref{eq:main_1} holds. The latter is equivalent to
    \begin{equation}
    \label{eq:main_proof_part_1}
        (H^{2*}_{L_\lambda})_{\sigma^*}
        \cong
        H^{2*}_{L_\lambda^\sigma}
    \end{equation}
    by Corollary \ref{cor:geom_model} and \eqref{eq:eq_coh_of_hmg_sp}.
    \begin{itemize}
        \item In case (a), we apply Lemma \ref{lem:qcrf_extension} to obtain $\sigma\in \mathfrak{S}$ fixing $\lambda$ with quasi-compact restriction to $L_\lambda$. Then \eqref{eq:main_proof_part_1} follows from Corollary \ref{cor:qc_main}.
        \item In case (b), we have $G^\vee = \PGL_{2n+1}(\C)$. Only the inner class of the split real structure fixes any minuscule coweights, and this inner class consists of a single inner-isomorphism class. We therefore only have to analyse the real structure $\sigma\colon A \mapsto \overline{A}$. The minuscule coweights of $\PGL_{2n+1}$ are exactly the fundamental coweights (and zero), and are (for standard choices) all fixed by $\sigma$. The corresponding Levi subgroups are $L_k\coloneqq \mathrm{P}(\GL_k\times \GL_{2n+1-k})$ for $k\in \N$, and using \eqref{eq:long_iso_base_level} and Corollary \ref{cor:crt_extended} one finds that
        $$
            H^{2*}_{L_k}
            \cong
            \C[x_1,\ldots, x_k, y_1,\ldots, y_{2n+1-k}]^{S_k\times S_{2n+1-k}}/(x_1 + \cdots + y_{2n+1-k}).
        $$
        
        The Cartan involution $A\mapsto (A^t)\inv$ acts as $-1$ on a Cartan subalgebra, so Lemma \ref{lem:from_eq_coh_to_invrings} and Theorem \ref{thm:CRT} imply that $\sigma^*$ acts on homogeneous elements by multiplication with $(-1)^{\deg}$. The coinvariant ring is then isomorphic to
        $$
            \C[x_1,\ldots, x_k, y_1,\ldots, y_{2n+1-k}]^{S_k\times S_{2n+1-k}
            \times \Z_2^{2n+1}},
        $$
        with $\Z_2^{2n+1}$ acting by sign changes on the variables. But a computation similar to that in Example \ref{expl:typeAexpl} identifies that ring with $H^{2*}_{L_k^\sigma}$, proving \eqref{eq:main_proof_part_1} for this case.
        
        \item In case (c), all elements of $\mathfrak{S}$ are quasi-compact, and can be conjugated by an inner automorphism to fix $\lambda$. The restriction to $\el_\lambda$ is then also quasi-compact, so we can proceed as in case (a).
    \end{itemize}

    This concludes the case distinction; the rest of the proof is again uniform. Firstly, \eqref{eq:main_2} holds for any quasi-compact $\sigma_0\in \mathfrak{S}$ by Lemma \ref{lem:inner_class_only}, \eqref{eq:long_iso_base_level}, and Corollary \ref{cor:qc_main}. The injection \eqref{eq:main_3} is achieved by Lemma \ref{lem:comparison_map}. The maps in diagram \eqref{eq:main_diag} are all derived from restriction maps of invariant polynomial rings, so the diagram commutes.
\end{proof}

\section{Characterisation of freeness}
\label{sec:ff}
In this section, Theorem \ref{thm:main} is used to characterise freeness of the coinvariant homomorphism \eqref{eq:coinv_alg_hom}, resulting in a proof of Theorem \ref{thm:degeneracy}. According to Theorem \ref{thm:main}, we have to analyse the composition
\begin{equation}
\label{eq:composition_to_be_analysed}
    H^{2*}_{(G^\vee)^{\sigma_0}}
    \overset{\varphi}{\rightarrow}
    H^{2*}_{(G^\vee)^{\sigma}}
    \rightarrow
    H^{2*}_{(G^\vee)^{\sigma}}((G^\vee)^\sigma/P_\lambda^\sigma)
\end{equation}
where $\varphi$ is the canonical injection constructed in Lemma \ref{lem:comparison_map}. 
\begin{lem}
\label{lem:phi_finite}
    $\varphi$ is finite.
\end{lem}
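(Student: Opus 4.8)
The plan is to unwind $\varphi$ through the identifications made in the proof of Lemma~\ref{lem:comparison_map} and reduce the assertion to a standard fact of commutative algebra. Recall from that proof that, after fixing a compact real structure $\tau$ commuting with both $\sigma$ and $\sigma_0$ and setting $\theta\coloneqq\sigma\tau$, $\theta_0\coloneqq\sigma_0\tau$, one obtains the common Cartan subalgebra $\h^\theta=\h^{\theta_0}$ and the containment $W(\theta)\le W(\theta_0)$ inside $\GL(\h^\theta)$. Via \eqref{eq:long_iso_base_level}, \eqref{eq:killing_ring_isos} and Corollary~\ref{cor:crt_extended}, the source of $\varphi$ is identified with $\C[\h^\theta]^{W(\theta_0)}$ -- here one uses that $\sigma_0$ is quasi-compact, so that the relevant component group acts trivially on this ring, exactly as in Corollary~\ref{cor:disconn_inv_ring_nice} -- while the target $H^*_{(G^\vee)^\sigma}$ is identified with a subring of $\C[\h^\theta]$ sandwiched between $\C[\h^\theta]^{W(\theta_0)}$ and $\C[\h^\theta]^{W(\theta)}$; under these identifications $\varphi$ is simply an inclusion of rings.

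With this reduction in hand, it suffices to show that any intermediate ring $\C[\h^\theta]^{W(\theta_0)}\subseteq R\subseteq\C[\h^\theta]$ is module-finite over $\C[\h^\theta]^{W(\theta_0)}$, and I would argue as follows. The group $W(\theta_0)$ is finite -- by Lemma~\ref{lem:fixed_subalgebras}(iii) it is the centraliser of $\theta_0$ in the Weyl group of $\g^\vee$ -- and it acts on the finitely generated $\C$-algebra $\C[\h^\theta]$; hence, by Noether's finiteness theorem, $\C[\h^\theta]$ is a finitely generated module over $\C[\h^\theta]^{W(\theta_0)}$, and the latter ring is Noetherian by Hilbert's basis theorem. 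Since $\C[\h^\theta]^{W(\theta_0)}\subseteq R$, the ring $R$ is a $\C[\h^\theta]^{W(\theta_0)}$-submodule of $\C[\h^\theta]$, and a submodule of a finitely generated module over a Noetherian ring is again finitely generated. Applying this with $R=H^*_{(G^\vee)^\sigma}$ yields that $\varphi$ is finite.

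I do not expect a genuine obstacle here. The finiteness itself is purely formal and, apart from the finiteness of $W(\theta_0)$, uses nothing about the pair $W(\theta)\le W(\theta_0)$; in particular the argument does not need the fact (true, since $\theta_0$ is pinning-preserving) that $\C[\h^\theta]^{W(\theta_0)}$ is a polynomial ring. The only point requiring care is the bookkeeping of the first paragraph -- confirming, against the proof of Lemma~\ref{lem:comparison_map}, that $\varphi$ really is the inclusion of invariant subrings of $\C[\h^\theta]$ and not some twisted version of it -- after which the conclusion follows at once.
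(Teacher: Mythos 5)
Your argument is correct, but it follows a different route from the paper. The paper disposes of the lemma in one line: $\varphi$ is an injection of finitely generated $\C$-algebras of equal transcendence degree, namely $\operatorname{rank}(G^\vee)^\sigma=\operatorname{rank}(G^\vee)^{\sigma_0}$, and finiteness is asserted from that. You instead unwind $\varphi$, as in Lemma \ref{lem:comparison_map}, to an inclusion $\C[\h^\theta]^{W(\theta_0)}\subseteq R\subseteq\C[\h^\theta]$ and invoke Noether's finiteness theorem together with the fact that a submodule of a finite module over a Noetherian ring is finite. Your version is longer but arguably more self-contained: a bare injection of finitely generated $\C$-algebras of equal transcendence degree need not be finite (consider $\C[x,xy]\subset\C[x,y]$, even graded), so the paper's one-liner implicitly leans on the same structural input you make explicit -- that both rings sit as invariant-type subrings of the common polynomial ring $\C[\h^\theta]$, which is itself finite over the smaller one. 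What the paper's phrasing buys is brevity and the conceptual point that the ranks of the two real forms agree; what your argument buys is that the finiteness is verified from first principles, needing only the finiteness of $W(\theta_0)$ and none of the reflection-group or polynomiality facts. Your bookkeeping in the first paragraph matches the construction of $\varphi$ in Lemma \ref{lem:comparison_map}, so there is no gap.
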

\begin{proof}
    Indeed, it is an injection between finitely generated $\C$-algebras of equal transcendence degree (namely $\operatorname{rank}(G^\vee)^\sigma = \operatorname{rank}(G^\vee)^{\sigma_0}$).
\end{proof}
The behaviour of the second map in \eqref{eq:composition_to_be_analysed} is related to the geometry of the homogeneous space $X\coloneqq (G^\vee)^\sigma/P_\lambda^\sigma$. Here it is more convenient to work with compact Lie groups, so we fix a compact real structure $\tau$ of $G^\vee$ that commutes with $\sigma$. It will be convenient to choose $\tau$ such that $\tau_*\lambda = -\lambda$, which can be achieved by a standard construction of compact real structures, \cf\cite[Thm.\,6.11]{KnappLG}. Now, $K\coloneqq ((G^\vee)^\sigma)^\tau$ is a maximal compact subgroup of $(G^\vee)^\sigma$. Since $(G^\vee)^\sigma \cong KP_\lambda^\sigma$ \cite[Prop.\,7.83f]{KnappLG}, $K$ acts transitively on $X$, so $X\cong K/L$ where $L\coloneqq P_\lambda^\sigma \cap K= L_\lambda^\sigma \cap K$. The cohomology of such compact homogeneous spaces is particularly well-behaved in the so-called \emph{equal rank} case:
\begin{thm}
\label{thm:equal_rank}
    If $K$ is a connected compact Lie group and $L\le K$ a closed subgroup, then the odd singular cohomology of $X=K/L$ vanishes if and only if $K$ and $L$ have the same rank. In this case, we further have an isomorphism $H^{2*}_K(X) \cong H^{2*}(X)\otimes_\C H^{2*}_K$ of $H^{2*}_K$-modules.
\end{thm}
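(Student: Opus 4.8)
The plan is to prove the equivalence by establishing its two implications and then to deduce the module isomorphism from the collapse of a spectral sequence. For the implication $H^{\mathrm{odd}}(X)=0\Rightarrow\rank K=\rank L$, I would pass to Euler characteristics: since $K$ is connected, $X=K/L$ is connected, so $H^{\mathrm{odd}}(X)=0$ forces $\chi(X)=\sum_i\dim H^{2i}(X)\ge 1>0$. I would then invoke the classical Hopf--Samelson computation, according to which $\chi(K/L)$ equals $|W_K|/|W_L|$ when $\rank K=\rank L$ and vanishes otherwise (for disconnected $L$, reduce to the identity component via the finite covering $K/L^\circ\to K/L$, using $\rank L=\rank L^\circ$). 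Thus $\chi(X)\ne 0$ already forces the equal-rank condition.

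For the converse, assume $\rank K=\rank L$. First I would reduce to $L$ connected: the covering $K/L^\circ\to K/L$ has finite deck group $\pi_0(L)$ acting by graded automorphisms, so $H^*(K/L)\cong H^*(K/L^\circ)^{\pi_0(L)}$, and vanishing of odd cohomology therefore descends from $K/L^\circ$ to $K/L$. With $L$ connected, fix a maximal torus $T\le L$, which by the rank hypothesis is also maximal in $K$; writing $\te$ for its Lie algebra, we have $H^*_K\cong\C[\te]^{W_K}$ and $H^*_L\cong\C[\te]^{W_L}$ with $W_L=N_L(T)/T\le N_K(T)/T=W_K$, both concentrated in even degrees, and the natural map $H^*_K\to H^*_L$ is the inclusion of invariant rings. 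The crucial point is that $H^*_L$ is a \emph{flat} (indeed free) $H^*_K$-module: the ring $\C[\te]$ is free over the polynomial ring $\C[\te]^{W_K}$ by Chevalley's theorem, and averaging over the finite group $W_L$ exhibits $\C[\te]^{W_L}$ as a $\C[\te]^{W_K}$-module direct summand of $\C[\te]$. Since $X=K/L$ is the homotopy fibre of $\mathbb{B}L\to\mathbb{B}K$ and $\mathbb{B}K$ is simply connected, the Eilenberg--Moore spectral sequence $\operatorname{Tor}^{H^*_K}_{\ast}(H^*_L,\C)\Rightarrow H^*(X)$ converges, and flatness makes the higher $\operatorname{Tor}$ vanish, so it collapses to
$$
    H^*(X)\;\cong\;H^*_L\otimes_{H^*_K}\C\;\cong\;\C[\te]^{W_L}\big/\bigl((\C[\te]^{W_K})_+\cdot\C[\te]^{W_L}\bigr),
$$
which, as a quotient of an even-degree ring by an ideal generated in even degrees, is concentrated in even degrees.

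In the equal-rank case I would obtain the module isomorphism from the Leray--Serre spectral sequence of the Borel fibration $X\to X_K\to\mathbb{B}K$. Since $K$ is connected, $\mathbb{B}K$ is simply connected and the coefficient system is trivial, so $E_2^{p,q}\cong H^p(\mathbb{B}K)\otimes H^q(X)$; because $H^{\mathrm{odd}}(\mathbb{B}K)=0$ and $H^{\mathrm{odd}}(X)=0$ (by the previous step), every bidegree of odd total degree vanishes, and since each $d_r$ raises total degree by one, all differentials vanish and $E_2=E_\infty$. Hence the associated graded of $H^*_K(X)$ for the Serre filtration is $H^*(\mathbb{B}K)\otimes H^*(X)$, a free $H^*_K$-module; as the filtration is by $H^*_K$-submodules, a graded Nakayama argument (for generation) together with a leading-term argument (for $H^*_K$-linear independence) upgrades a homogeneous basis of the associated graded to a free $H^*_K$-basis of $H^*_K(X)$, giving $H^*_K(X)\cong H^*(X)\otimes_\C H^*_K$ of rank $\dim_\C H^*(X)$.

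I expect the main obstacle to be the converse implication, concentrated in the flatness of $H^*_L$ over $H^*_K$: this is precisely where the equal-rank hypothesis is indispensable (it is what makes $T$ maximal in $L$ and $W_L\le W_K$), and it rests on Chevalley's theorem together with the averaging splitting. Once flatness is secured, both the Eilenberg--Moore collapse and the Serre collapse are formal. Disconnectedness of $L$ is only a minor nuisance, dispatched by the transfer/covering arguments indicated above.
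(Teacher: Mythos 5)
Your proposal is correct, and it follows the same overall skeleton as the paper's proof, which is essentially a string of citations: Hopf--Samelson for the ``only if'' direction, Borel for the ``if'' direction, and degeneration of the Serre spectral sequence of $X\to X_K\to \mathbb{B}K$ for the $H^*_K$-module statement. Where you differ is that you actually prove the Borel step rather than cite it: your route via the Eilenberg--Moore spectral sequence of the fibre square for $\mathbb{B}L\to\mathbb{B}K$, with flatness of $H^*_L\cong \C[\te]^{W_L}$ over $H^*_K\cong\C[\te]^{W_K}$ supplied by Chevalley's theorem plus the averaging splitting, is a clean self-contained argument, and it correctly isolates the equal-rank hypothesis as the point where $T$ becomes maximal in both groups and $W_L\le W_K$. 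Your handling of disconnected $L$ (transfer for the finite covering $K/L^\circ\to K/L$ in characteristic zero, and multiplicativity of $\chi$ for the necessity direction) is sound, and your final lifting of a basis of the associated graded is just the Leray--Hirsch argument, which works since $E_2=E_\infty$ and each total degree carries a finite filtration by $H^*_K$-submodules. In short: same structure as the paper, with the paper's appeal to Borel replaced by an explicit Eilenberg--Moore/flatness proof, which buys self-containedness at the cost of invoking heavier spectral-sequence machinery than the paper needs to mention.
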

\noindent
\textbf{Note:} In our convention, \emph{rank} means dimension of maximal torus. This need not coincide with the rank of the root system (due to a possibly positive-dimensional centre).
\begin{proof}
    The first statement is classical: the ``only if'' part follows from vanishing of the Euler characteristic $\chi(X)$ in the nonequal rank case first shown by Hopf and Samelson \cite{HopfSamelson1941}. The ``if'' part is due to Borel \cite{Borel1953_esp_fib_et_hom}. For a more detailed discussion, see \cite[ch.\,5]{Carlson_thesis}. The statement about equivariant cohomology (known as \emph{equivariant formality}) follows from degeneracy of the Serre spectral sequence for the fibration $X_K\to \mathbb{B}K$ with fibre $X$, \cf \cite[ch.\,9]{Carlson_thesis}.
\end{proof}
Returning to the analysis before Theorem \ref{thm:equal_rank}, we are led to compare the ranks of $K=((G^\vee)^\sigma)^\tau$ and $L=L_\lambda^\sigma\cap K$. We translate the question to the complex setting via the Cartan involution $\theta\coloneqq \sigma \tau$, noting that $K$ and $L$ are compact real forms of $(G^\vee)^\theta$ and $L_\lambda^\theta$, respectively. The theorem then implies:
\begin{lem}
\label{lem:eq_rank_is_good}
    If $(G^\vee)^\theta$ and $L_\lambda^\theta$ have the same rank, then 
    $
        H^{2*}_{(G^\vee)^{\sigma}}((G^\vee)^\sigma/P_\lambda^\sigma)
    $
     is a free module over $H^{2*}_{(G^\vee)^{\sigma}}$. Otherwise, $H^{2*}_{(G^\vee)^{\sigma}}$ has strictly larger transcendence degree than $H^{2*}_{(G^\vee)^{\sigma}}((G^\vee)^\sigma/P_\lambda^\sigma)$, so the module structure cannot be free.
\end{lem}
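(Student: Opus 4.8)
The plan is to transport the assertion, via the chain of identifications assembled just before Theorem~\ref{thm:equal_rank}, into the setting of compact Lie groups, where that theorem applies. Recall from there that $H^*_{(G^\vee)^\sigma} \cong H^*_K$ and $H^*_{(G^\vee)^\sigma}((G^\vee)^\sigma/P_\lambda^\sigma) \cong H^*_K(K/L)$ compatibly, where $K = ((G^\vee)^\sigma)^\tau$ is a compact real form of $(G^\vee)^\theta$ and $L = L_\lambda^\sigma \cap K$ is a compact real form of $L_\lambda^\theta$; by \eqref{eq:eq_coh_of_hmg_sp} the $H^*_K$-module $H^*_K(K/L)$ is simply $H^*_L$ with the module structure given by the restriction $H^*_K \to H^*_L$. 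Since a compact Lie group and its complexification have equal rank, $\rank K = \rank (G^\vee)^\theta$ and $\rank L = \rank L_\lambda^\theta$; as $L_\lambda^\theta$ is a closed subgroup of $(G^\vee)^\theta$ we have $\rank L \le \rank K$ in all cases, with equality exactly in the first case of the lemma.

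In the equal-rank case I would apply Theorem~\ref{thm:equal_rank} to conclude that $H^*_K(K/L) \cong H^*(K/L) \otimes_\C H^*_K$ as $H^*_K$-modules, which is free of finite rank since $H^*(K/L)$ is finite dimensional. The one real difficulty -- and the step I expect to be the main obstacle -- is that Theorem~\ref{thm:equal_rank} requires the acting group to be connected, whereas $(G^\vee)^\sigma$, and hence $K$, may well be disconnected (already $\PGL_2(\R)$ is). I would deal with this by descending to the identity component $K^\circ$. Decomposing $K/L$ into $K^\circ$-orbits, each orbit is a homogeneous space of $K^\circ$ of the form $K^\circ/(K^\circ \cap kLk\inv)$, and one checks -- using that a maximal torus of $K$ lies in $K^\circ$ and that $\rank(kLk\inv) = \rank K$ -- that these are all \emph{equal-rank} homogeneous spaces of $K^\circ$; Theorem~\ref{thm:equal_rank} then gives $H^*_{K^\circ}(K/L) \cong H^*(K/L) \otimes_\C H^*_{K^\circ}$, naturally in the finite group $\pi_0(K)$. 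Taking $\pi_0(K)$-invariants, which is exact over $\C$, recovers $H^*_K(K/L)$ as a module over $H^*_K = (H^*_{K^\circ})^{\pi_0(K)}$, and it remains to see that freeness survives this operation. Here I would feed in the structural results of \S\ref{sec:invrings_and_qcrs}: as in the proof of Corollary~\ref{cor:disconn_inv_ring_nice}, the relevant component groups act mildly enough on the invariant rings involved to exhibit $H^*_K(K/L)$ as an $H^*_K$-module direct summand of a free module, hence free.

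In the remaining case $\rank L_\lambda^\theta < \rank (G^\vee)^\theta$, the argument is purely dimension-theoretic and needs no connectedness. The ring $H^*_{(G^\vee)^\sigma}((G^\vee)^\sigma/P_\lambda^\sigma) \cong H^*_L$ is a finitely generated graded $\C$-algebra of Krull dimension $\rank L$, whereas $H^*_{(G^\vee)^\sigma} \cong H^*_K$ has Krull dimension $\rank K > \rank L$; this is exactly the strict inequality of transcendence degrees claimed in the statement. If $H^*_L$ were a free $H^*_K$-module it would be free of positive rank, as it contains the unit, hence faithfully flat and finite over $H^*_K$, so that $\Spec H^*_L \to \Spec H^*_K$ would be finite and surjective and we would get $\rank L = \dim H^*_L = \dim H^*_K = \rank K$ -- a contradiction. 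Hence the module structure cannot be free, which gives the remaining assertion of the lemma.
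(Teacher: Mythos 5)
Your skeleton is the same as the paper's: pass to the compact picture $X\cong K/L$, invoke Theorem \ref{thm:equal_rank} for the equal-rank case, and rule out freeness by a dimension count otherwise (the paper phrases the second half via Corollary \ref{cor:crt_extended}: both rings are finite-group invariants of polynomial rings on Cartan subalgebras, so a drop in transcendence degree forces the structure map $H^*_{(G^\vee)^\sigma}\to H^*_{(G^\vee)^\sigma}((G^\vee)^\sigma/P_\lambda^\sigma)$ to be non-injective, hence non-free). Where you go beyond the paper is the $\pi_0(K)$-repair of the connectedness hypothesis, and that is where your argument has a genuine gap. The module you exhibit $H^*_K(K/L)=(H^*_{K^\circ}(K/L))^{\pi_0(K)}$ as a direct summand of, namely $H^*(K/L)\otimes_\C H^*_{K^\circ}$, is free over $H^*_{K^\circ}$, not over $H^*_K=(H^*_{K^\circ})^{\pi_0(K)}$; to run your argument you need $H^*_{K^\circ}$ to be free (or at least projective) over $H^*_K$, which is not automatic for invariants of a finite group acting on a polynomial ring and can fail for disconnected compact groups. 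Moreover ``direct summand of a free module, hence free'' gives only projectivity in general; it does yield freeness here, but via the graded Nakayama argument for finitely generated graded modules over a connected graded ring, which should be said. In the paper's actual application the equal-rank case arises only when $\theta$ is pinning-preserving (Lemma \ref{lem:rank_comparison}), and then Lemma \ref{lem:fixed_subalgebras}(iii) together with Corollary \ref{cor:disconn_inv_ring_nice} shows the relevant component groups act trivially and $H^*_K$ is the invariant ring of the reflection group $W^\theta$, hence polynomial; with that input your outline closes, but as written the key freeness transfer is asserted rather than proved. (For comparison, the paper simply applies Theorem \ref{thm:equal_rank} without discussing $\pi_0$ at this point.)

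In the unequal-rank case your conclusion is right, but the clause ``hence faithfully flat and finite over $H^*_K$'' is not justified: freeness does not imply finiteness (a free module may have infinite rank; $\C[x,y]$ is free but not finite over $\C[x]$). You do not need finiteness. Freeness of any rank already makes the structure map injective (an element of $H^*_K$ killing $1$ kills the whole free module), and an injection between these finitely generated domains is incompatible with the strict drop in transcendence degree -- this is exactly the paper's argument; alternatively, faithful flatness plus going-down gives $\dim H^*_L\ge \dim H^*_K$ directly, again a contradiction. So the second half is correct after removing the unjustified finiteness step, while the first half needs the additional input identified above to be complete.
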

\begin{proof}
    The first assertion indeed follows immediately from Theorem \ref{thm:equal_rank}. For the statement about transcendence degrees, let $\h\le (\g^\vee)^\theta$ and $\h'\le \el_\lambda^\theta$ be Cartan subalgebras. By Corollary \ref{cor:crt_extended}, the rings involved are invariant subrings of $\C[\h]$ and $\C[\h']$ by finite groups.\footnote{Finiteness follows from the well-known finiteness of Weyl groups and the fact that $G^\theta$ and $L_\lambda^\theta$ have finitely many connected components. One way to see this is that they are homotopic to their intersections with the compact group $K$.} Their transcendence degrees are then the dimensions of $\h$ and $\h'$, respectively. If $H^{2*}_{(G^\vee)^{\sigma}}$ has strictly larger transcendence degree than $H^{2*}_{(G^\vee)^{\sigma}}((G^\vee)^\sigma/P_\lambda^\sigma)$, the structure map $H^{2*}_{(G^\vee)^{\sigma}} \to H^{2*}_{(G^\vee)^{\sigma}}((G^\vee)^\sigma/P_\lambda^\sigma)$ is not injective, so it does not define a free module structure.
\end{proof}
We are thus led to compare Cartan subalgebras of $(\g^\vee)^\theta$ and $\el_\lambda^\theta$, with the following result:
\begin{lem}
\label{lem:rank_comparison}
    Let $\g$ be a complex reductive Lie algebra and $\theta\in \Aut_2(\g)$. Among Cartan subalgebras of $\g$ preserved by $\theta$, choose $\h$ such that $\dim \h^\theta$ is minimal. Let $\lambda$ be a minuscule coweight of $\g$ contained in $\h^{-\theta}$ (\ie\ $\theta(\lambda) = - \lambda$) and $\el_\lambda$ the corresponding Levi subalgebra of $\g$. If $\theta$ preserves a pinning of $\el_\lambda$, then $\g^\theta$ and $\el_\lambda^\theta$ have equal rank if and only if $\theta$ is also pinning-preserving for $\g$.
\end{lem}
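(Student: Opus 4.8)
The plan is to compute $\rank\g^\theta$ and $\rank\el_\lambda^\theta$ in terms of the fixed maximally split Cartan $\h$, to reduce the equal-rank condition to the vanishing of a single Cayley-transform invariant of $\theta$, and then to recognise that vanishing as pinning-preservation via Gantmacher's normal form.

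First the set-up. Since $\theta(\lambda)=-\lambda$, the Levi $\el_\lambda=Z_\g(\lambda)$ is $\theta$-stable, and it contains $\h$ because $\lambda\in\h$; in particular it contains the split part $\mathfrak{a}:=\h^{-\theta}$. By the minimality of $\dim\h^\theta$, $\mathfrak{a}$ is a maximal $\theta$-split toral subalgebra of $\g$, hence a fortiori a maximal one of $\el_\lambda$, so $\h=\h^\theta\oplus\mathfrak{a}$ is a maximally split $\theta$-stable Cartan subalgebra of $\el_\lambda$ as well, with the \emph{same} toral part $\h^\theta$. Next I would invoke the standard Cayley-transform description of Cartan subalgebras of a real reductive Lie algebra (\cf\ \parencite[ch.\,VI]{KnappLG}): starting from a maximally split $\theta$-stable Cartan, a maximal set of mutually strongly orthogonal \emph{real} roots gives a chain of Cayley transforms producing a maximally compact Cartan, each transform enlarging the $\theta$-fixed part by one dimension. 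Thus, for any complex reductive Lie algebra equipped with an involution, $\rank(\text{fixed subalgebra})=\dim(\text{$\theta$-fixed part of a maximally split Cartan})+m$, where $m$ is the number of such real roots (the centre of $\el_\lambda$ contributes no roots, so this applies to $\el_\lambda$ too). Applied to $\g$ and to $\el_\lambda$ this gives $\rank\g^\theta=\dim\h^\theta+m_\g$ and $\rank\el_\lambda^\theta=\dim\h^\theta+m_{\el_\lambda}$, so the equal-rank condition is equivalent to $m_\g=m_{\el_\lambda}$.

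Now I would dispose of $m_{\el_\lambda}$. If an involution $\theta$ preserves a pinning $(\h_0,\Pi_0,\{X_\beta\})$ of a reductive Lie algebra, it merely permutes the simple roots, hence preserves the positive cone; so no root is real (and none is noncompact imaginary), $\h_0$ is maximally split, and the associated $m$ equals $0$. Applied to $\el_\lambda$ — this is where the hypothesis that $\theta|_{\el_\lambda}$ is pinning-preserving enters, together with the fact that maximally split $\theta$-stable Cartans are conjugate, so that $m_{\el_\lambda}$ may be computed using our $\h$ — this yields $m_{\el_\lambda}=0$. Hence $\rank\g^\theta=\rank\el_\lambda^\theta$ if and only if $m_\g=0$, and it remains to show $m_\g=0\iff\theta$ is pinning-preserving for $\g$. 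The implication $\Leftarrow$ is the same computation applied to $\g$ itself: a $\theta$-stable pinning Cartan of $\g$ is maximally split with no real roots, so $m_\g=0$.

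The reverse implication, $m_\g=0\Rightarrow\theta$ pinning-preserving for $\g$, is the crux and the step I expect to be the main obstacle: it says that a maximally split $\theta$-stable Cartan which is simultaneously maximally compact must arise from a $\theta$-stable pinning — equivalently, that the real form with Cartan involution $\theta$ is quasi-compact. I would prove this via Gantmacher's normal form for an involution (as already used for Proposition \ref{prop:qc_funfacts} and Lemma \ref{lem:comparison_map}, \cf\ \parencite[Thm.\,4.2]{Onishchik2003_real}): relative to a maximally split $\theta$-stable Cartan one can write $\theta$ as the product of a pinning-preserving involution and an inner automorphism of Cayley type attached to a strongly orthogonal set of real roots of cardinality $m_\g$; when $m_\g=0$ this second factor is trivial and $\theta$ itself preserves the pinning. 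Combining the three parts gives the asserted equivalence. I would also remark that minuscularity of $\lambda$ is not used directly here — it enters upstream, through Lemma \ref{lem:qcrf_extension}, only to guarantee that the hypothesis ``$\theta|_{\el_\lambda}$ pinning-preserving'' is attainable.
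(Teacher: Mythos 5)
Your overall skeleton is close to the paper's: both arguments compare $\rank\g^\theta$ and $\rank\el_\lambda^\theta$ with $\dim\h^\theta$ by moving between the maximally split and maximally compact $\theta$-stable Cartans via Cayley transforms, with real and noncompact imaginary roots controlling the process. But the pivotal claim in your second paragraph is false, and it fails exactly in the cases this lemma has to cover. You assert that a pinning-preserving involution admits no noncompact imaginary roots, so that its pinned Cartan is maximally split and $m=0$. Pinning-preservation only rules out \emph{real} roots (it permutes the positive roots); it does not rule out noncompact imaginary ones. Concretely, for the diagram involution $\theta$ of $\sl_3(\C)$ one has $\theta(X_{\alpha_1})=X_{\alpha_2}$, hence $\theta([X_{\alpha_1},X_{\alpha_2}])=-[X_{\alpha_1},X_{\alpha_2}]$, so $\alpha_1+\alpha_2$ is noncompact imaginary; the associated real form is the split $\sl_3(\R)$, which has two conjugacy classes of Cartan subalgebras, so $\rank\g^\theta=1>0=\dim\h^\theta$ even though $\theta$ is pinning-preserving. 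The same phenomenon occurs for $\el_\lambda$ inside the lemma's hypotheses: for $\g=\sl_{2n}(\C)$ with $\g^\theta=\so_{2n}(\C)$ and $\el_\lambda'=\sl_{2k+1}(\C)\oplus\sl_{2n-2k-1}(\C)$, the restriction $\theta|_{\el_\lambda}$ is pinning-preserving, the maximally split $\h$ is totally split ($\dim\h^\theta=0$), yet $\rank\el_\lambda^\theta=\rank(\so_{2k+1}\oplus\so_{2n-2k-1})=n-1$, so your claim $m_{\el_\lambda}=0$ is wrong, as is the equivalence ``$m_\g=0\iff\theta$ pinning-preserving'' (the direction $\Leftarrow$ fails for every type $A_{2n}$ diagram involution). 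The one-way implications you are implicitly using (Knapp, Prop.\,6.70: no noncompact imaginary roots $\Rightarrow$ maximally split; no real roots $\Rightarrow$ maximally compact) are not equivalences, and the paper has to add the hypothesis that $\theta$ sends each simple root to an equal or orthogonal one (Knapp, Prop.\,6.104) — which excludes exactly the $A_{2n}$ diagram action — and then dispose of the remaining $A_{2n}$ configurations ($\el_\lambda'=\sl_{2k+1}\oplus\sl_{2n-2k-1}$ in $\sl_{2n}$, and $\el_\lambda'=\sl_{2n+1}$ in $\sp_{4n+2}$) by explicit rank computations.

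Two further steps are asserted rather than proved. First, the formula $\rank(\text{fixed subalgebra})=\dim\h^\theta+m$ with $m$ the cardinality of a maximal strongly orthogonal set of real roots of the original Cartan needs justification (the Cayley process uses real roots of the \emph{current} Cartan at each stage). Second, your Gantmacher step for ``$m_\g=0\Rightarrow\theta$ pinning-preserving'' misquotes the normal form: Gantmacher writes $\theta$, up to conjugacy, as a pinning-preserving involution composed with $\exp(\ad h)$ for $h$ in the fixed part of the Cartan — there is no ``Cayley-type factor attached to $m_\g$ strongly orthogonal real roots''. The paper instead proves this direction by a Vogan-diagram argument: a non-quasi-compact form necessarily has noncompact imaginary roots with respect to a maximally compact Cartan, hence more than one class of Cartans, hence $\rank\g^\theta>\dim\h^\theta$. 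So the conclusion of the lemma is correct, but your proof as written both asserts false intermediate statements (which would, for instance, ``prove'' $\rank\el_\lambda^\theta=\dim\h^\theta$ in the $\sl_{2n}$ example above) and leaves the hard converse direction unestablished.
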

\begin{proof}
    Since $\el_\lambda$ contains the centre of $\g$, we may assume that $\g$ is semisimple. The ranks of $\g^\theta$ and $\el_\lambda^\theta$ are the maximal dimensions of $\te^\theta$ for $\theta$-stable Cartan subalgebras $\te$ of $\g$ or of $\el_\lambda$, respectively (\cf\ Lemma \ref{lem:fixed_subalgebras}). In particular, both ranks admit the lower bound $\dim \h^\theta$. 
    
    For convenience, we now translate to the parallel setting of real structures, which is more easily found in the literature. That is, we fix a compatible compact real structure $\tau$ of $\g$ and define $\sigma\coloneqq \tau\theta = \theta\tau$. Then $\te$ as above correspond to \emph{maximally compact} Cartan subalgebras of $\g^\sigma$ or $\el_\lambda^\sigma$. 
    
    The conjugacy classes of Cartan subalgebras of a real reductive Lie algebra can be related by \emph{Cayley transforms}, which are defined using \emph{real or noncompact imaginary roots}, \cf\cite[p.\,390]{KnappLG}. In particular, if there are no such roots with respect to $\sigma$, then there is only one such conjugacy class. If the Cartan involution $\theta$ preserves a pinning $(\te,\Pi,\{X_\alpha\})$, there are no real roots with respect to $\te$ and no noncompact imaginary simple roots in $\Pi$. Now suppose that, additionally, $\theta^*\alpha$ is either equal to or orthogonal to $\alpha$ for every simple $\alpha$. Using \cite[Prop.\,6.104]{KnappLG}, this implies that there are no real or noncompact imaginary roots at all, so all Cartan subalgebras of of the real form are conjugate. Since $\theta$ acts on the simple roots according to an automorphism of the Dynkin diagram, this assumption holds in all simple types except for the nontrivial involution of type $A_{2n}$.

    Thus, suppose that $\el_\lambda$ has no simple summand of type $A_{2n}$ on which $\theta$ restricts to a nontrivial involution. Then the above analysis implies that $\el_\lambda^\sigma$ has only one conjugacy class of $\theta$-stable Cartan subalgebras. In particular $\h^\theta$ must be a Cartan subalgebra of $\el_\lambda^\theta$, and so $\operatorname{rank}\el_\lambda^\theta = \dim \h^\theta$. A similar analysis shows that if $\theta$ is pinning-preserving on $\g$ and there are no interfering type $A_{2n}$ summands, $\g^\theta$ has rank $\dim\h^\theta$ as well. To finish the proof we thus have to do the following:
    \begin{enumerate}
        \item Show that a non-quasi-compact real form of a complex reductive Lie algebra always admits more than one isomorphism class of Cartan subalgebras. Since only one of them is maximally compact \cite[Prop.\,6.61]{KnappLG}, it will follow that $\operatorname{rank} \g^\theta > \dim\h^\theta$ in this case.
        \item Check the cases involving type $A_{2n}$ summands with non-inner Cartan involutions separately.
    \end{enumerate}
    For task (1), it suffices to show that such a real form always has noncompact imaginary roots with respect to a maximally compact Cartan subalgebra. This follows from the classification using \emph{Vogan diagrams}, \cf\cite[VI.8]{KnappLG}, since the absence of noncompact imaginary roots would imply quasi-compactness (by uniqueness of the classification).

    For task (2), we may assume that $\g$ is simple. For $\g=\sl_{2n+1}$, one finds that no non-inner involution restricts to a pinning preserving involution on any $\el_\lambda$ defined as above. Thus, we only have to treat the cases where $\el_\lambda$ has some type $A_{2n}$ summand on which the Cartan involution acts nontrivially. Up to isomorphism, they are as follows:
    \begin{itemize}
        \item $\g= \sl_{2n}(\C)$ with $\el_\lambda = \s(\gl_{2k+1}\oplus \gl_{2n-2k-1})(\C)$ for $k,n\in \N$; $\g^\theta = \so_{2n}(\C)$, and $\el_\lambda^\theta = \so_{2k+1}(\C) \oplus \so_{2n-2k-1}(\C)$. 
        Here $\theta$ is not pinning-preserving on $\g$ and the fixed subalgebras have ranks $n$ and $n-1$, respectively.
        \item $\g = \sp_{4n+2}(\C)$ for $n\in \N$ with $\el_\lambda = \sl_{2n+1}(\C)$; $\g^\theta = \gl_{2n+1}(\C)$ and $\el_\lambda^\theta = \so_{2n+1}(\C)$.
        Here, $\theta$ is again not pinning-preserving on $\g$, and the fixed subalgebras have ranks $2n+1$ and $n$, respectively.
    \end{itemize}
\end{proof}
\begin{rmk}
    It seems likely that Lemma \ref{lem:rank_comparison} can be proven more systematically and with fewer assumptions, perhaps using qualitative properties of Cayley transforms, \cf\cite[VI.7]{KnappLG}.
\end{rmk}

We now collect the auxiliary results of this section into a proof of Theorem \ref{thm:degeneracy}, which characterises freeness of the coinvariant homomorphism \eqref{eq:coinv_alg_hom}.
\begin{proof}[Proof of Theorem \ref{thm:degeneracy}]
    For $\g=\sl_{2n+1}(\C)$, $n\in \N$, with $\mathfrak{S}$ containing a split real structure, the Theorem can be checked directly. Here all elements of the inner class are quasi-compact, and a simple computation (as in the proof of Theorem \ref{thm:main}) shows that the coinvariant homomorphism \eqref{eq:coinv_alg_hom} is indeed free in this case. For the remainder of the proof, we assume that no element of $\mathfrak{S}$ restricts to a split real structure on a type $A_{2n}$ factor of $G^\vee$. 

    It is then clear from the proof of Theorem \ref{thm:main} that we may choose $\sigma = \sigma_0$ whenever a quasi-compact real structure in the given inner class fixes $\lambda$. If this is the case, the first map $\varphi$ in \eqref{eq:composition_to_be_analysed} is an identity. Moreover, the second map is then free (\ie\ defines a free module structure) by Lemmas \ref{lem:rank_comparison} and \ref{lem:eq_rank_is_good}.

    On the other hand, if $\sigma$ is not quasi-compact, then Lemmas \ref{lem:rank_comparison} and \ref{lem:eq_rank_is_good} imply that the transcendence degree drops in the second step of \eqref{eq:composition_to_be_analysed}. By Lemma \ref{lem:phi_finite}, it stays the same in the first step, so overall the coinvariant homomorphism decreases transcendence degree. In particular, it is not injective, hence cannot give rise to a free module structure.
\end{proof}

\section{Action on fibres and weights}
\label{sec:weight_action}
In this section, we relate the involutions discussed above to combinatorial information about the representation $V^\lambda$. In the general case, where $\lambda$ is any dominant integral weight, one should work with the big algebra \cite{Hausel2024_avatars}, whose fibres over suitable points of $\Spec S(\g)^\g$ can be identified with the canonical basis of $V^\lambda$ (\cf \cite[final slide]{Hausel2024_StonyBrook}, see also \cite{HKRW2020_crystals}). For simplicity, we instead (continue to) restrict ourselves to the case where $\Kir^\lambda(\g)$ is commutative and thus agrees with the big algebra (\cf \cite[Thm.2.1]{Hausel2024_avatars}).
By Proposition \ref{prop:kirillov_facts}, this means that $V^\lambda$ is weight multiplicity free, so instead of the canonical basis we can work with just the set of weights $\wt(\lambda)$. We begin this section by describing how to identify $\wt(\lambda)$ with fibres of the map
\begin{equation}
\label{eq:Kirillov_Spec_projection}
    \pi \colon
    \Spec\Kir^\lambda(\g)
    \to
    \Spec S(\g)^\g.
\end{equation}

For involutions induced by real structures, we then describe how the fixed points of the action on weights are encoded by the coinvariant homomorphism \eqref{eq:coinv_alg_hom}. This is applied to the special case of split real structures, where we recover (the weight multiplicity free case of) a $q=-1$ phenomenon of Stembridge \cite{Stembridge1994_minuscule, Stembridge1996_canonical}. Along the way, we obtain a combinatorial obstruction to freeness of the coinvariant homomorphism \eqref{eq:coinv_alg_hom}, see Proposition \ref{prop:freeness_obstruction}. In all of this section, we \textbf{assume that $V^\lambda$ is weight multiplicity free}. For Theorem \ref{thm:fixed_point_count}, proved below, $\lambda$ is assumed minuscule.

The starting point is to consider, for $x\in \g$, the evaluation homomorphism $S(\g)^\g\cong S(\g^*)^\g\to \C$, in which we first use the Killing form to view $S(\g)^\g$ as $\g$-invariant polynomials on $\g$ and then evaluate these on $\g$. Applying this to the first factor of $\Kir^\lambda(\g)$, we obtain a map
$$
    \ev_x\colon \Kir^\lambda(\g)
    = (S(\g)^\g\otimes \End(V^\lambda))^\g
    \to
    \End(V^\lambda).
$$
The image of $\ev_x$ is a subalgebra of $\End(V^\lambda)$ which we denote by
$$
    \Kir^\lambda_x(\g)
    \coloneqq
    \ev_x(\Kir^\lambda(\g)).
$$

Let us now assume that $x$ is a regular semisimple element. Recall that its centraliser $\h\coloneqq \mathfrak{c}_\g(x)$ is then a Cartan subalgebra of $\g$. From the definition of $\Kir^\lambda(\g)$, we see that $\ev_x$ then has values in $(\End (V^\lambda))^\h = \End_\h (V^\lambda)$, the endomorphisms of $V^\lambda$ which commute with the action of $\h$. For such $x$, $\Kir^\lambda_x(\g)$ turns out to describe a fibre of the map $\pi$ of \eqref{eq:Kirillov_Spec_projection}:
\begin{prop}
\label{prop:evaluation_onto}
    If $x\in \g$ is a regular semisimple element with corresponding Cartan subalgebra $\h \coloneqq \mathfrak{c}_\g(x)$, then $\Kir^\lambda_x(\g) = \End_\h(V^\lambda)$. Since $V^\lambda$ is weight multiplicity free, it follows that $\Kir^\lambda_x(\g) \cong \C^{\wt(\lambda)}$. Moreover, viewing $x$ as a closed point of $\Spec S(\g)^\g \cong \Spec\C[\g]^\g \cong \Spec\C[\h]^W$, we further have $\pi\inv(x) \cong \Spec (\Kir^\lambda_x(\g))$. Thus, $\pi\inv(x)$ is a zero-dimensional reduced scheme with underlying set $\wt(\lambda)$. 
\end{prop}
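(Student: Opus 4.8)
The plan is to restrict attention to the open subset of $\Spec S(\g)^\g$ where the discriminant $D$ is invertible, on which Lemma~\ref{lem:discriminant} replaces $\Kir^\lambda(\g)$ by the much more transparent $\Kir^\lambda(\h)$. Since $x$ is regular semisimple we have $D(x)\ne 0$, so the closed point $x$ lies in this locus and its residue field $\C$ is unaffected by inverting $D$. The scheme-theoretic fibre is $\pi\inv(x)=\Spec\big(\Kir^\lambda(\g)\otimes_{S(\g)^\g}\C\big)$, and, combining the Chevalley isomorphism $S(\g)^\g\cong S(\h)^W$ (which matches the two copies of $D$) with Lemma~\ref{lem:discriminant} after inverting $D$, this becomes canonically $\Spec\big(\Kir^\lambda(\h)\otimes_{S(\h)^W}\C\big)$. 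Moreover, since $\ev_x$ factors through $r_\lambda$ (evaluating a restricted polynomial at $x\in\h$ equals evaluating the original polynomial at $x\in\g$), under this identification $\ev_x$ is the reduction map $\Kir^\lambda(\h)\to\Kir^\lambda(\h)\otimes_{S(\h)^W}\C$ followed by a map I will now make explicit; in particular $\Kir^\lambda_x(\g)$ will be its image.

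The heart of the argument is the computation of $\Kir^\lambda(\h)\otimes_{S(\h)^W}\C=\big((S(\h)\otimes\End_\h(V^\lambda))^W\big)\otimes_{S(\h)^W}\C$. Because $S(\h)$ is a finite free module over $S(\h)^W$, so is $M\coloneqq S(\h)\otimes\End_\h(V^\lambda)$, and its reduction modulo $\m_x\coloneqq\ker(\ev_x\colon S(\h)^W\to\C)$ is $\big(S(\h)\otimes_{S(\h)^W}\C\big)\otimes_\C\End_\h(V^\lambda)\cong\bigoplus_{y\in Wx}\End_\h(V^\lambda)$, using that the scheme-theoretic fibre of $\h\to\h/W$ over $x$ is the reduced set $Wx$ of $\#W$ points (as $x$ is regular). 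Now $\#W$ is invertible in $\C$, so the Reynolds operator splits $M$ as an $S(\h)^W[W]$-module into $M^W$ and a complement on which $\sum_{w\in W}w$ acts by zero; the complement has no nonzero invariants even after $\otimes_{S(\h)^W}\C$, so forming $W$-invariants commutes with this base change. Hence $\Kir^\lambda(\h)\otimes_{S(\h)^W}\C\cong\big(\bigoplus_{y\in Wx}\End_\h(V^\lambda)\big)^W$, and since $W$ permutes the summands freely and transitively (again as $x$ is regular), projection to the $y=x$ summand identifies this with $\End_\h(V^\lambda)$. Tracing the identifications shows the resulting isomorphism is exactly $\ev_x$, so $\Kir^\lambda_x(\g)=\End_\h(V^\lambda)$ and $\pi\inv(x)\cong\Spec\End_\h(V^\lambda)$.

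Finally, the weight-multiplicity-free hypothesis gives $\End_\h(V^\lambda)=\bigoplus_{\mu\in\wt(\lambda)}\End(V^\lambda_\mu)=\bigoplus_{\mu\in\wt(\lambda)}\C\cong\C^{\wt(\lambda)}$, so $\Kir^\lambda_x(\g)\cong\C^{\wt(\lambda)}$ and $\pi\inv(x)\cong\Spec\C^{\wt(\lambda)}$ is a zero-dimensional reduced scheme whose underlying set is $\wt(\lambda)$. The step I expect to need the most care is the compatibility of forming $W$-invariants with reduction modulo $\m_x$ — a priori $(-)^W$ is only left exact — where the Reynolds-operator splitting is what saves the day, together with the bookkeeping needed to recognise the resulting abstract isomorphism as $\ev_x$ rather than merely as some isomorphism between the right source and target. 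If one prefers to sidestep the splitting, an alternative is to prove surjectivity of $\ev_x$ on $\Kir^\lambda(\h)$ directly by Lagrange interpolation — pick $p\in\C[\h]$ with $p(x)=1$ and $p(wx)=0$ for $w\ne e$, and symmetrise $p\otimes A$ over $W$ — and then conclude that $\Kir^\lambda(\g)\otimes_{S(\g)^\g}\C$ and $\End_\h(V^\lambda)$ agree by a dimension count, using that $\Kir^\lambda(\g)$ is free over $S(\g)^\g$ (Proposition~\ref{prop:kirillov_facts}(iii)) of rank $\#\wt(\lambda)$.
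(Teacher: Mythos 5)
Your argument is correct, and its skeleton (invert the discriminant, use Lemma \ref{lem:discriminant} to pass to $\Kir^\lambda(\h)$, factor $\ev_x$ through $r_\lambda$) matches the paper's proof; where you differ is in the central computation of the fibre over $\h$. The paper proves surjectivity of the evaluation $\widetilde{\ev}_x\colon \Kir^\lambda(\h)\to\End_\h(V^\lambda)$ by exhibiting an explicit symmetrised interpolation preimage $\sum_{w\in W}\delta_{w\cdot x}\otimes(w\cdot A)$ — precisely the ``Lagrange interpolation'' you relegate to your alternative — and then disposes of the kernel by observing that $\Kir^\lambda(\h)\cdot\ker(\widetilde{\ev}_x|_{S(\h)^W})=\ker\widetilde{\ev}_x$. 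You instead compute the base change $\Kir^\lambda(\h)\otimes_{S(\h)^W}\C$ outright: freeness of $S(\h)$ over $S(\h)^W$, reducedness of the fibre of $\h\to\h/W$ at the regular point $x$, and the Reynolds operator to commute $W$-invariants with base change give $\bigl(\bigoplus_{y\in Wx}\End_\h(V^\lambda)\bigr)^W\cong\End_\h(V^\lambda)$ in one stroke. This buys you surjectivity and the kernel identification (hence reducedness of $\pi\inv(x)$) simultaneously and more systematically, at the cost of the bookkeeping you flag, namely checking that the induced isomorphism really is $\widetilde{\ev}_x$ (projection to the $y=x$ summand sends $\sum f_i\otimes A_i$ to $\sum f_i(x)A_i$, so this is fine) and that the images of $\ev_x$ and $\widetilde{\ev}_x$ agree after inverting $D$, which Lemma \ref{lem:discriminant} handles exactly as in the paper. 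One minor caveat on your sketched alternative: the dimension count there needs the rank of $\Kir^\lambda(\g)$ over $S(\g)^\g$ to be $\#\wt(\lambda)$, which is not part of Proposition \ref{prop:kirillov_facts}(iii) but only appears later (via the principal grading, \S\ref{sec:weight_action}); the paper avoids this by comparing images directly, as does your main argument.
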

\begin{proof}
    As observed above, $\ev_x$ takes values in $\End_\h(V^\lambda)$, so the first assertion reduces to surjectivity of $\ev_x\colon \Kir^\lambda(\g) \to \End_\h (V^\lambda)$. This surjectivity follows from \cite[Thm.\,1.4]{Panyushev2004_endoj} (see also \cite{Panyushev2002_covarj}) and the fact that the $G$-orbit of $x$ is closed in $\g$. 
    
    For the second assertion, note that endomorphisms in $\End_\h(V^\lambda)$ have to send each weight space to itself. By weight multiplicity freeness, each weight space is one-dimensional, so such endomorphisms are diagonal with respect to the weight space decomposition.

    It remains to show that $\pi\inv(x)$ is isomorphic to the spectrum of $\Kir^\lambda_x(\g)$. Let $\m_x \subset \C[\g]^\g$ be the kernel of the evaluation homomorphism taking $f\in \C[\g]^\g$ to $f(x)\in \C$. Then, by definition, we have
    $$
        \pi\inv(x)
        =
        \Spec \big(
            \Kir^\lambda(\g) \otimes_{\C[\g]^\g} (\C[\g]^\g/\m_x) 
        \big)
        =
        \Spec \big(
            \Kir^\lambda(\g)/(\Kir^\lambda(\g)\cdot \m_x)
        \big),
    $$
    whereas 
    $$
        \Kir^\lambda_x(\g) = \Kir^\lambda(\g)/\ker(\ev_x).
    $$
    Since $(\Kir^\lambda(\g)\cdot \m_x)$ is clearly contained in $\ker(\ev_x)$, there is a canonical surjection 
    $$
        \Kir^\lambda(\g) \otimes_{\C[\g]^\g} (\C[\g]^\g/\m_x)
        \twoheadrightarrow
        \Kir^\lambda_x(\g).
    $$
    But these are finite dimensional $\C$-algebras, and
    $$
        \dim_\C \big(\Kir^\lambda(\g) \otimes_{\C[\g]^\g} (\C[\g]^\g/\m_x))
        =
        \operatorname{rk}_{\C[\g]^\g}(\Kir^\lambda(\g))
        =
        \dim_\C (\End_\h(V^\lambda))
        =
        \dim_\C \Kir^\lambda_x(\g)
    $$
    (for the second equality, see \cite[p.\,4, above Cor.\,1]{Kirillov2001_introduction}), so the surjection has to be an isomorphism.
\end{proof}

Thus, the map $\pi$ of \eqref{eq:Kirillov_Spec_projection} has reduced fibres over all regular semisimple points in the base. We will also require the base points to be fixed by the involutions under consideration, which can be arranged without breaking the good fibre behaviour. Indeed, if $\sigma^*$ is an involution of the algebra $S(\g)^\g\to \Kir^\lambda(\g)$ induced by a real structure (see Section \ref{sec:real_str}), then the action of $\sigma^*$ on the base ring $S(\g)^\g$ is by a complex linear involution $\theta$ of $\g$ (\cf\ Lemma \ref{lem:from_eq_coh_to_invrings}). Moreover, only the inner class of $\theta$ matters, so we may assume that $\theta$ preserves a pinning. As observed in the proof of Lemma \ref{lem:dist_coinv}, a pinning-preserving automorphism fixes a regular semisimple element of $\g$ and we conclude:
\begin{lem}
\label{lem:good_base_points}
    For an involution $\sigma^*$ induced by a real structure, there is a dense open subset $U$ of the fixed point scheme $(\Spec S(\g)^\g)^{\sigma^*}$ such that $\pi\inv (x) \cong \Spec \C^{\wt(\lambda)}$ for every $x\in U$.
\end{lem}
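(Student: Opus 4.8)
The plan is to deduce the statement from Proposition \ref{prop:evaluation_onto} together with the discussion preceding the lemma, the only genuinely new ingredient being a density argument. Recall that the closed points of $\Spec S(\g)^\g \cong \g/\!/G$ correspond to semisimple $G$-orbits, and that the regular semisimple ones form the open locus $\{D \neq 0\}$, where $D \in S(\g)^\g$ is the discriminant. Over any such point, Proposition \ref{prop:evaluation_onto} yields a fibre isomorphic to $\Spec \C^{\wt(\lambda)}$. Hence, writing $\theta$ for the $\C$-linear involution of $\g$ through which $\sigma^*$ acts on $S(\g)^\g$ (Lemma \ref{lem:from_eq_coh_to_invrings}), which we may take pinning-preserving as recalled before the statement, it suffices to prove that
$$
    U \coloneqq (\Spec S(\g)^\g)^{\sigma^*} \cap \{D \neq 0\}
$$
is dense in $(\Spec S(\g)^\g)^{\sigma^*}$; it is automatically open there.

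For the density I would first identify the fixed scheme. By Proposition \ref{prop:coinv_ring} it is $\Spec(S(\g)^\g_{\sigma^*})$, and by Lemma \ref{lem:dist_coinv}, applied to the pinning-preserving $\theta$, the coinvariant ring $S(\g)^\g_{\sigma^*}$ is identified via restriction of polynomials with $\C[\g^\theta]^{\g^\theta}$. The latter is a subring of the polynomial ring $\C[\g^\theta]$, hence an integral domain, so $(\Spec S(\g)^\g)^{\sigma^*}$ is irreducible; consequently $U$ is dense as soon as it is nonempty.

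Finally I would exhibit a point of $U$. As recalled just before the statement, the pinning-preserving $\theta$ fixes a regular semisimple element $h \in \g$ (for instance the neutral element $h$ of a principal $\sl_2$-triple $(e,f,h)$ built on the nilpotent $e = \sum_\alpha X_\alpha$ of the preserved pinning, which lies in $\g^\theta$; since $e$ is principal in $\g$ as well, $h$ is regular semisimple in $\g$). Because an automorphism of a semisimple Lie algebra preserves the Killing form, the identification $\g \cong \g^*$ used throughout is $\theta$-equivariant, so the image $[h]$ of $h$ in $\Spec S(\g)^\g$ satisfies $\sigma^*[h] = [\theta(h)] = [h]$, while $D([h]) \neq 0$ since $h$ is regular semisimple. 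Thus $[h] \in U$, which finishes the argument.

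I do not anticipate a real obstacle here: everything rests on results already established, and the one point needing a little care is the density of $U$, for which the integrality of the coinvariant ring supplied by Lemma \ref{lem:dist_coinv} is the crucial input. Alternatively, one avoids invoking irreducibility altogether by simply noting that $D$ cannot vanish identically on the fixed scheme, as $D([h]) \neq 0$.
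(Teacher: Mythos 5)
Your argument is correct and takes essentially the same route as the paper: intersect the fixed-point scheme with the regular semisimple locus $\{D\neq 0\}$, then get density from irreducibility together with a $\theta$-fixed regular semisimple element (the paper invokes exactly these two points, and you additionally justify irreducibility via Lemma \ref{lem:dist_coinv} and nonemptiness via the principal $\sl_2$-triple). The only imprecision is your closing alternative: merely knowing that $D$ does not vanish identically on the fixed scheme would not yield density if that scheme were reducible, so that shortcut still implicitly relies on the integrality of the coinvariant ring that you established.
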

\begin{proof}
    We may take $U$ to be the intersection of $(\Spec S(\g)^\g)^{\sigma^*}$ with the open subset of $\Spec S(\g) \cong \g$ of regular semisimple elements. Then $U$ is open in $(\Spec S(\g)^\g)^{\sigma^*}$, hence dense open (by irreducibility) since it is nonempty by the discussion above.
\end{proof}

For $x$ as in the lemma, the involution $\sigma^*$ restricts to an involution of $\pi\inv (x)$, given by an involutive permutation of the underlying set $\wt(\lambda)$. By Proposition \ref{prop:sigma_action_translated}, $\sigma^*$ is of the form $\iota_{\eta, -1}$ for a certain $\eta\in \Aut_2(\g)$, allowing for a convenient description of the induced permutation:
\begin{prop}
\label{prop:involution_to_wt_permutation}
    Let $\eta\in \Aut_2(\g)$ such that $\eta^*\lambda = \lambda$ (with notation as in Proposition \ref{prop:iota_eta_zeta}(i)). By replacing $\eta$ with an involution in the same inner class if necessary, we may assume that there exists a regular semisimple $x\in \g$ such that $\eta(x) = -x$. The restriction of $\iota \coloneqq \iota_{\eta, -1}$ to the fibre $\pi\inv(x) \cong \wt (\lambda)$ is then given by
    $$
        \wt(\lambda)\to \wt(\lambda),
        \quad
        \mu \mapsto \mu \circ \eta.
    $$
\end{prop}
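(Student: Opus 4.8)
The plan is to transport the automorphism $\iota\coloneqq\iota_{\eta,-1}$ of $\Kir^\lambda(\g)$ through the description of the fibre $\pi\inv(x)$ from Proposition \ref{prop:evaluation_onto}. First I would fix, as the statement permits, a regular semisimple $x\in\g$ with $\eta(x)=-x$ and set $\h\coloneqq\mathfrak{c}_\g(x)$; since $\eta$ is an automorphism, $\eta(\h)=\mathfrak{c}_\g(\eta(x))=\mathfrak{c}_\g(-x)=\h$, so $\eta$ restricts to $\h$ and $\mu\circ\eta$ makes sense for $\mu\in\h^*$. Recall from Proposition \ref{prop:evaluation_onto} that $\pi\inv(x)\cong\Spec\Kir^\lambda_x(\g)$, that $\Kir^\lambda_x(\g)=\End_\h(V^\lambda)=\prod_{\mu\in\wt(\lambda)}\End(V^\lambda_\mu)$, and that each factor is $\cong\C$ by weight multiplicity freeness; under this product decomposition the point of $\pi\inv(x)$ labelled by $\mu\in\wt(\lambda)$ is the character $p_\mu\colon\Kir^\lambda_x(\g)\to\C$ reading off the scalar by which an operator acts on the weight space $V^\lambda_\mu$.

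The key step, and the one I expect to need the most care, is the identity $\ev_x\circ\iota=\Conj_A\circ\ev_x$ of maps $\Kir^\lambda(\g)\to\End(V^\lambda)$, where $A=A_{\eta,\lambda}$ is the intertwiner of Proposition \ref{prop:iota_eta_zeta}(ii) (note $\Conj_A$ does not depend on the rescaling of $A$). Both sides extend to $S(\g)\otimes\End(V^\lambda)$, where by Proposition \ref{prop:iota_eta_zeta}(iii) the ambient map is $S(-\eta)\otimes\Conj_A$, so it is enough to check the identity on a pure tensor $f\otimes B$, with $f\in S(\g)$ regarded via the Killing form as a polynomial on $\g$. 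There $\ev_x\bigl((S(-\eta)\otimes\Conj_A)(f\otimes B)\bigr)=S(-\eta)(f)(x)\cdot ABA\inv$. Since $\eta$ preserves the Killing form, $S(-\eta)$ corresponds under $S(\g)\cong\C[\g]$ to precomposition with $z\mapsto-\eta(z)$, so $S(-\eta)(f)(x)=f(-\eta(x))=f(x)$ using $\eta(x)=-x$; hence the expression equals $f(x)\,ABA\inv=A\,\ev_x(f\otimes B)\,A\inv$, and linearity gives the claim. The delicate point is exactly that the rescaling by $-1$ in $\iota_{\eta,-1}$ cancels the relation $\eta(x)=-x$, and one must keep separate the Killing-form identification turning $S(\g)^\g$ into polynomials on $\g$ from its interaction with $S(-\eta)$. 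It follows that $\iota$ preserves $\ker\ev_x$, hence descends to an automorphism of $\Kir^\lambda_x(\g)=\ev_x(\Kir^\lambda(\g))$ which is the restriction of $\Conj_A$ (and $\Conj_A$ does preserve $\Kir^\lambda_x(\g)$, since $\iota$ is bijective).

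It then remains to compute $\Conj_A$ on $\End_\h(V^\lambda)$ and pass to spectra. From $A(y\cdot v)=\eta(y)\cdot A(v)$, taking $y\in\h$ and $v$ a weight vector of weight $\mu$, a short computation gives $A(V^\lambda_\mu)=V^\lambda_{\mu\circ\eta}$ for every $\mu$ (which reconfirms that $\mu\mapsto\mu\circ\eta$ permutes $\wt(\lambda)$, as already follows from $\eta^*\lambda=\lambda$). Hence $\Conj_A$ sends the idempotent of $\End_\h(V^\lambda)$ projecting onto $V^\lambda_\mu$ to the one projecting onto $V^\lambda_{\mu\circ\eta}$, \ie\ $p_\mu\circ\Conj_A=p_{\mu\circ\eta}$. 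Finally, because $\eta(x)=-x$ the point $x$ is fixed by the self-morphism of $\Spec S(\g)^\g$ induced by $\iota$ (\cf Lemma \ref{lem:from_eq_coh_to_invrings}), so $\iota$ preserves the fibre $\pi\inv(x)$ and its restriction there is $\Spec$ of the descended map $\Conj_A$, sending a character $p$ to $p\circ\Conj_A$. Therefore $p_\mu\mapsto p_{\mu\circ\eta}$, which under $\pi\inv(x)\cong\wt(\lambda)$ is precisely the asserted permutation $\mu\mapsto\mu\circ\eta$.
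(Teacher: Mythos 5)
Your proposal is correct and matches the paper's (much terser) argument: the paper likewise observes that the induced involution on $\Kir^\lambda_x(\g)$ is $\Conj_{A_{\eta,\lambda}}$ and that this permutes the weight-space idempotents by $\mu\mapsto\mu\circ\eta$, and your verification of $\ev_x\circ\iota=\Conj_A\circ\ev_x$ — where the $-1$ in $\iota_{\eta,-1}$ cancels against $\eta(x)=-x$ under the Killing-form identification — is exactly the step the paper leaves implicit. The only point you treat as a hypothesis rather than prove is that $\eta$ may be replaced within its inner class so that $\eta(x)=-x$ for some regular semisimple $x$; the paper justifies this by taking $\eta$ to be a principal involution (citing Adams--Vogan), and that citation or an equivalent argument should be supplied for completeness.
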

\begin{proof}
    To arrange $\eta(x) = - x$ for some regular semisimple $x$, take $\eta$ to be a \emph{principal involution} in the given inner class -- see \cite[Def.\,6.13 and Thm.\,6.14]{AV1992_projective}. The involution of $\Kir^\lambda_x(\g)$ induced by $\iota$ is then simply $\Conj_{A_{\eta,\lambda}}$, which is easily seen to act on the weights as claimed.
\end{proof}

Since these fibres are zero-dimensional and reduced, it is easy to count the fixed points:
\begin{lem}
\label{lem:fp_tr_coinv}
    Let $A$ be a finite-dimensional reduced commutative $\C$-algebra and $\iota$ an algebra automorphism of $A$, corresponding to an automorphism of $\Spec A$ also denoted $\iota$. Then
    $$
        \# (\Spec A)^\iota
        =
        \tr \iota
        =
        \dim_\C A_\iota,
    $$
    where $A_\iota$ denotes the coinvariant ring (see Proposition \ref{prop:coinv_ring}).
\end{lem}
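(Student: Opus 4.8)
The plan is to reduce everything to elementary linear algebra via the structure theorem for finite-dimensional reduced commutative $\C$-algebras. Since $\C$ is algebraically closed, such an $A$ is isomorphic to $\C^{\Spec A}$, the algebra of functions on the finite set $\Spec A$ with pointwise operations; in particular $\dim_\C A = \#\Spec A$, and $A$ carries a canonical $\C$-basis consisting of the primitive idempotents $e_p$, $p\in\Spec A$, with $e_pe_q = \delta_{pq}e_p$ and $\sum_p e_p = 1$. First I would record that, under this identification, the automorphism $\iota$ of $A$ is dual to the induced permutation of the finite set $\Spec A$, which we also denote $\iota$; concretely $\iota(e_p) = e_{\iota(p)}$.

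For the equality $\tr\iota = \#(\Spec A)^\iota$, I would simply observe that in the idempotent basis $\iota$ acts by the permutation matrix of $p\mapsto\iota(p)$, so its trace counts the fixed points of that permutation, which are exactly the elements of $(\Spec A)^\iota$.

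For the equality $\dim_\C A_\iota = \#(\Spec A)^\iota$, I would use Proposition \ref{prop:coinv_ring}: $A_\iota = A/I$ with $I = (a - \iota(a)\colon a\in A)$. Plugging in $a = e_p$ shows $I$ contains all $e_p - e_{\iota(p)}$, and since the $e_p$ span $A$ these in fact generate $I$. In the quotient, idempotents in the same $\iota$-orbit are identified; the key point is that if $p\neq q$ lie in one orbit, then $e_p = e_q$ together with $e_pe_q = 0$ forces $e_p = e_p^2 = e_pe_q = 0$. Hence every $e_p$ belonging to an orbit of size $>1$ dies in $A_\iota$, while the $e_p$ with $\iota(p)=p$ survive as a complete orthogonal system of idempotents (complete because $\sum_p e_p = 1$). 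This yields $A_\iota\cong\C^{(\Spec A)^\iota}$, and comparing dimensions finishes the proof.

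I do not expect a genuine obstacle here; the only step needing a moment's care is the last one, namely the observation that two distinct primitive idempotents which are forced to coincide while remaining orthogonal must both vanish. Everything else is a direct application of the structure theorem and of Proposition \ref{prop:coinv_ring}.
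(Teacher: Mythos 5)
Your proof is correct and takes essentially the same route as the paper's: identify $A\cong\C^{\Spec A}$ via the primitive idempotents (the paper's $\delta_x$), note that $\iota$ acts by the permutation matrix of the induced map on $\Spec A$ so its trace counts fixed points, and compute $A_\iota$ by showing that every idempotent in a nontrivial $\iota$-orbit dies in the quotient while the fixed ones survive, giving $A_\iota\cong\C^{(\Spec A)^\iota}$. If anything, your direct permutation-matrix computation of the trace and your explicit identification of the ideal $(a-\iota(a))$ with the span of the non-fixed idempotents are a bit more careful than the paper's brief appeal to finite order and eigenspaces.
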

\begin{proof}
    The assumptions imply that $X\coloneqq \Spec A$ is finite and reduced, so $A\cong \C^X$. In particular, $A$ has a $\C$-basis consisting of functions $\delta_x$ which are $1$ on $x\in X$ and zero on $X\setminus \{x\}$. Moreover, $\iota$ permutes these basis functions, so that
    $$
        \tr \iota
        =
        \# \{x\in X\mid \delta_x \circ \iota = \delta_x\}
        = \# (\Spec A)^\iota.
    $$
    The span of the fixed basis functions can be identified (as a vector space) with 
    $$
        \C^X/\big( \C\cdot\{\delta_x\mid  \delta_x \circ \iota \ne \delta_x \} \big)
        \cong
        \C^X/\big( \C^X\cdot\{\delta_x\mid  \delta_x \circ \iota \ne \delta_x \} \big)
        \cong
        A_\iota,
    $$
    which finishes the proof.
\end{proof}

Thus, the number of fixed points in a suitable fibre $\pi\inv (x)$ is given by $\dim_\C \Kir^\lambda_x(\g)_{\sigma^*}$. If the full coinvariant ring $\Kir^\lambda(\g)_{\sigma^*}$ is already free over $(S(\g)^\g)_{\sigma^*}$ then that dimension is equal to its rank. Note that in this case the corresponding morphism on spectra is surjective, so in particular $\pi\inv (x)^{\sigma^*} \ne \emptyset$. Conversely, if $\pi\inv (x)^{\sigma^*} = \emptyset$, then the same argument shows that the coinvariant homomorphism \eqref{eq:coinv_alg_hom} is not injective. We can phrase this as a simple obstruction to freeness, shedding some light on Theorem \ref{thm:degeneracy}:
\begin{prop}
\label{prop:freeness_obstruction}
    Let $\iota = \iota_{\eta, -1}$ be an involution of $\Kir^\lambda(\g)$ of the form given in Proposition \ref{prop:iota_eta_zeta}. Assume that $\eta$ has a regular semisimple eigenvector in $\g$ with eigenvalue $-1$. If the set
    $$
        \{\mu \in \wt(\lambda) \mid \mu = \mu \circ \eta \}
    $$
    is empty, then the coinvariant homomorphism $S(\g)^\g_\iota \to \Kir^\lambda(\g)_\iota$ is not injective, hence not free.
\end{prop}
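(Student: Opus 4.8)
The plan is to detect the failure of injectivity on a single, well-chosen fibre of the projection $\pi$ of \eqref{eq:Kirillov_Spec_projection}, and then to invoke lying-over. By hypothesis there is a regular semisimple $x \in \g$ with $\eta(x) = -x$. Since $\iota = \iota_{\eta,-1}$ acts on $S(\g)$ through the linear automorphism $y \mapsto -\eta(y)$ of $\g$ (Proposition~\ref{prop:iota_eta_zeta}(iii)), and this automorphism fixes $x$, the corresponding closed point $[x] \in \Spec S(\g)^\g$ is fixed by $\iota$ and therefore lifts to a point of $(\Spec S(\g)^\g)^\iota = \Spec S(\g)^\g_\iota$ (Proposition~\ref{prop:coinv_ring}). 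By Proposition~\ref{prop:evaluation_onto} the scheme $\pi\inv([x])$ is finite and reduced with underlying set $\wt(\lambda)$, and -- since the present $\eta$ already possesses a regular semisimple eigenvector of eigenvalue $-1$ -- Proposition~\ref{prop:involution_to_wt_permutation} applies directly and identifies the action of $\iota$ on $\pi\inv([x]) \cong \wt(\lambda)$ with $\mu \mapsto \mu \circ \eta$. The assumption that $\{\mu \in \wt(\lambda) \mid \mu = \mu \circ \eta\}$ is empty thus says precisely that $\pi\inv([x])^\iota = \emptyset$, i.e.\ that the fibre over $[x]$ of the induced morphism $\bar\pi\colon \Spec \Kir^\lambda(\g)_\iota \to \Spec S(\g)^\g_\iota$ has empty underlying set.

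Next I would conclude by contradiction. Suppose the coinvariant homomorphism $S(\g)^\g_\iota \to \Kir^\lambda(\g)_\iota$ were injective. Since $\Kir^\lambda(\g)$ is finite over $S(\g)^\g$ (Proposition~\ref{prop:kirillov_facts}(iii)), its quotient $\Kir^\lambda(\g)_\iota$ is finite, hence integral, over $S(\g)^\g_\iota$; together with injectivity, lying-over then forces $\bar\pi$ to be surjective, contradicting $\bar\pi\inv([x]) = \emptyset$. Hence the coinvariant homomorphism is not injective, and therefore not free: $\Kir^\lambda(\g)_\iota$ is nonzero (the automorphism $\iota$ preserves the grading inherited from $S(\g)$ and acts as the identity in degree $0$, so $1$ does not lie in the defining ideal of the coinvariant ring), while $S(\g)^\g_\iota$ is an integral domain -- in fact a polynomial ring, either by Theorem~\ref{thm:main} or directly from Springer's theory of regular elements as in the proof of Lemma~\ref{lem:dist_coinv} -- and a nonzero free module over a domain has injective structure map.

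The only genuinely delicate point is the consistency of the representative $\eta$: Proposition~\ref{prop:involution_to_wt_permutation} describes the permutation of $\wt(\lambda)$ via precomposition with the specific $\eta$ carrying the regular semisimple $(-1)$-eigenvector, whereas $\iota$ and its restriction to $S(\g)^\g$ depend only on the inner class of $\eta$ together with $\eta^*\lambda = \lambda$ (Proposition~\ref{prop:iota_eta_zeta}(iv)); in writing out the proof I would take care that both descriptions are applied to one and the same $\eta$, so that the $\eta$ appearing in ``$\mu \mapsto \mu \circ \eta$'' is exactly the one used to produce $x$. Everything else is routine -- checking that $[x]$ is $\iota$-fixed is a one-line sign computation, identifying $\pi\inv([x])^\iota$ with the fibre of $\bar\pi$ over $[x]$ is immediate on underlying sets, and no estimates appear anywhere.
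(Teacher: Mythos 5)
Your argument is correct and follows essentially the same route as the paper, whose proof simply cites the preceding discussion (fixed points in a reduced fibre over an $\iota$-fixed regular semisimple base point, plus the finiteness/lying-over argument showing injectivity would force surjectivity on spectra) together with Proposition \ref{prop:involution_to_wt_permutation}. Your additional verifications -- that $[x]$ is $\iota$-fixed, that the same $\eta$ is used throughout, and that $\Kir^\lambda(\g)_\iota \ne 0$ so that non-injectivity indeed rules out freeness -- are exactly the details the paper leaves implicit.
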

\begin{proof}
    Follows directly from the discussion above and Proposition \ref{prop:involution_to_wt_permutation}.
\end{proof}
\begin{rmk}
    When $\g$ is simple, it is not hard describe the action of $\eta$ on weights explicitly (as is done for inner $\eta$ below). For instance, if $\eta$ is a Chevalley involution (meaning that $\eta$ acts as $-1$ on a Cartan subalgebra), then this action is simply $\mu\mapsto -\mu$. If $0$ is not a weight of $V^\lambda$, this corresponds to a non-free coinvariant homomorphism. Along these lines, one could prove combinatorially that the freeness condition in Theorem \ref{thm:degeneracy} is necessary.
\end{rmk} 

For the rest of this section, we \textbf{specialise to the case where $\sigma$ is a split real structure}. 

In this case, $\sigma^* = (-1)^{\deg}$ according to Proposition \ref{prop:sigma_action_translated}. To extract information on fixed points in fibres from this fact, let us recall (from Proposition \ref{prop:kirillov_facts}) that $\Kir^\lambda(\g)$ is finite-free over its graded subring $S(\g)^\g$. Thus, there is a graded $\C$-vector space $V$ such that $\Kir^\lambda(\g)\cong S(\g)^\g\otimes_\C V$ as a graded $S(\g)^\g$-module. Using the description by invariant polynomial rings in Section \ref{sec:invrings_and_qcrs} one can show \cite[p.\,11]{Panyushev2004_endoj} that $V=V^\lambda$ with principal grading.\footnote{Alternatively, one can use the geometric model in Theorem \ref{thm:geom_model} together with the geometric Satake equivalence \cite{MV} to conclude this for all big algebras, in particular for minuscule Kirillov algebras.} This grading is defined in terms of the weight spaces $V^\lambda_\mu$, $\mu\in \wt{\lambda}$. Namely, all weights of $V^\lambda$ are obtained from the lowest weight $-\lambda^*$ by successively adding simple roots; if $k$ such additions are needed to obtain $\mu$ we place $V^\lambda_\mu$ in degree $k$. If $\Delta$ denotes the root system and $\rho^\vee \coloneqq \tfrac{1}{2}\sum_{\alpha\in \Delta^+}\alpha^\vee$ the half sum of positive coroots, then $\langle\rho^\vee, \alpha\rangle = 1$ for all simple roots $\alpha$. Thus, the degree of $V^\lambda_\mu$ can be computed as $k=\langle\rho^\vee, \mu + \lambda^*\rangle = \langle\rho^\vee, \mu + \lambda\rangle$ and the Poincaré polynomial for this grading is the \emph{Dynkin polynomial}, \cf\cite[\S3]{Panyushev2004_endoj}
\begin{equation}
\label{eq:dynkinpoly}
    \D^\lambda(q) 
    =
    \sum_{\mu \in \wt{\lambda}} (\dim V^\lambda_\mu) q^{\langle \mu + \lambda, \rho^\vee \rangle}
    =
    \prod_{\alpha \in \Delta^+} \frac{1 - q^{\langle \rho +\lambda, \alpha^\vee \rangle}}{1 - q^{\langle \rho, \alpha^\vee \rangle}}.
\end{equation}

From the graded module isomorphism $\Kir^\lambda(\g) \cong S(\g)^\g \otimes_\C V^\lambda$, it follows that $\Kir^\lambda_x(\g) \cong \Kir^\lambda(\g) \otimes_{S(\g)^\g} \C \cong V^\lambda$ as a graded $\C$-vector space, for any $x\in \g$. If $x$ is moreover fixed by $\sigma^*$, it follows that $\sigma^*$ restricts to $\Kir^\lambda_x(\g)$ as $(-1)^{\deg}$. In particular, its trace is given by the Poincaré polynomial evaluated at $-1$. Via Lemma \ref{lem:fp_tr_coinv} we conclude:
\begin{prop}
\label{prop:q=-1}
    Let $\g$ be a complex semisimple Lie algebra, $\lambda$ a minuscule weight, $G$ the connected simply connected Lie group with Lie algebra $\g$, $G^\vee$ its Langlands dual, and $\sigma$ a split real structure of $G^\vee$. Then, given $x\in (\Spec S(\g)^\g)^{\sigma^*}$ with reduced fibre as in Lemma \ref{lem:good_base_points}, we have
    $$
        \# (\pi\inv (x))^{\sigma^*} = \D^\lambda(-1).
    $$
\end{prop}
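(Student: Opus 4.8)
The plan is to reduce everything to a trace computation via Lemma \ref{lem:fp_tr_coinv}. Fix $x$ as in the statement, so that by Proposition \ref{prop:evaluation_onto} and Lemma \ref{lem:good_base_points} the fibre $\pi\inv(x)\cong\Spec\Kir^\lambda_x(\g)$ is finite and reduced. Since $\sigma^*$ acts on the subring $S(\g)^\g$ as $(-1)^{\deg}$ (Proposition \ref{prop:sigma_action_translated}, split case) and $x\in(\Spec S(\g)^\g)^{\sigma^*}$, the maximal ideal $\mathfrak{m}_x\subset S(\g)^\g$ is $\sigma^*$-stable, so $\sigma^*$ descends to an automorphism of $\Kir^\lambda_x(\g)=\Kir^\lambda(\g)/\mathfrak{m}_x\Kir^\lambda(\g)$ (Proposition \ref{prop:evaluation_onto} identifies this quotient with $\ev_x(\Kir^\lambda(\g))$). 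By Lemma \ref{lem:fp_tr_coinv} it then suffices to show $\operatorname{tr}\bigl(\sigma^*\mid\Kir^\lambda_x(\g)\bigr)=\D^\lambda(-1)$.

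For this I would use the graded $S(\g)^\g$-module isomorphism $\Kir^\lambda(\g)\cong S(\g)^\g\otimes_\C V^\lambda$ with $V^\lambda$ in principal grading, recalled above from \cite{Panyushev2004_endoj}. Being a \emph{graded} isomorphism, it intertwines $\sigma^*=(-1)^{\deg}$ on the left with $(-1)^{\deg}\otimes(-1)^{\deg}$ on the right. Passing to the fibre over $x$ kills the grading, but the grading of $\Kir^\lambda(\g)$ still induces a $\sigma^*$-stable filtration on $\Kir^\lambda_x(\g)$; one checks that the associated graded is $V^\lambda$ with its principal grading, and that $\sigma^*$ acts on the $n$-th graded piece by $(-1)^n$. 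Since the trace of a finite-order operator equals its trace on the associated graded, this yields $\operatorname{tr}\bigl(\sigma^*\mid\Kir^\lambda_x(\g)\bigr)=\sum_{\mu\in\wt(\lambda)}(\dim V^\lambda_\mu)(-1)^{\langle\mu+\lambda,\rho^\vee\rangle}=\D^\lambda(-1)$ by the first expression in \eqref{eq:dynkinpoly}. (For minuscule $\lambda$ one could also just note $\dim V^\lambda_\mu=1$ throughout.) Combining with Lemma \ref{lem:fp_tr_coinv} finishes the proof.

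The main delicate point is the step where $\sigma^*$ is pushed to the fibre: $\mathfrak{m}_x$ is not homogeneous for $x\neq 0$, so $\Kir^\lambda_x(\g)$ carries only a filtration, not a grading, and one must argue that this does not disturb the trace — in effect, that the trace over an arbitrary $\sigma^*$-fixed point agrees with the trace over the origin, where $\sigma^*$ is visibly $(-1)^{\deg}$ on $V^\lambda$. Everything else — the existence of a suitable regular semisimple, $\sigma^*$-fixed base point, and the reducedness of the fibre — is already supplied by Lemma \ref{lem:good_base_points} and Proposition \ref{prop:evaluation_onto}, so no further work is needed there.
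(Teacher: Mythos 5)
Your proposal is correct and follows essentially the same route as the paper: the graded free-module isomorphism $\Kir^\lambda(\g)\cong S(\g)^\g\otimes_\C V^\lambda$, the identification $\sigma^*=(-1)^{\deg}$ from Proposition \ref{prop:sigma_action_translated}, the trace computation on the reduced fibre, and Lemma \ref{lem:fp_tr_coinv}. The filtration step you flag as delicate is exactly what the paper passes over quickly, and it can be shortcut: a homogeneous free basis $1\otimes v_i$ of $\Kir^\lambda(\g)$ consists of $\sigma^*$-eigenvectors with eigenvalues $(-1)^{\deg v_i}$, and since $\sigma^*$ preserves $\mathfrak{m}_x\Kir^\lambda(\g)$ these descend to an eigenbasis of $\Kir^\lambda_x(\g)$, giving the trace $\D^\lambda(-1)$ directly.
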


To finish the analysis, we now want to compute the restriction of $\sigma^*$ to a good fibre $\Spec \Kir^\lambda_x(\g)$ again, but from the point of view of $\Kir^\lambda_x(\g) \cong \C^{\wt{\lambda}}$. In this picture, the grading appears less natural and the explicit permutation on the set of weights from Proposition \ref{prop:involution_to_wt_permutation} is preferable. Here, the involution $\eta$ appearing in the proposition will be inner and map a regular semisimple element $x$ to its negative. Modifying $\eta$ if necessary, we may assume that $x$ lies in the dominant Weyl chamber, so that the action of $\eta$ on weights is given by the longest element $w_0$ of $W$. We are now in position to synthesise a proof of Theorem \ref{thm:fixed_point_count} from the discussion in this section.

\begin{proof}[Proof of Theorem \ref{thm:fixed_point_count}]
    The first claim of the Theorem is that there is a dense open subset $U$ of the fixed point scheme $(\Spec S(\g)^\g)^{\sigma^*}$ over which the fibres of \eqref{eq:Kirillov_Spec_projection} are reduced; this is shown in Lemma \ref{lem:good_base_points}. 
    
    For such a base point $x$, Proposition \ref{prop:q=-1} shows that the number of fixed points in its fibre is $\D^\lambda(-1)$. By the discussion immediately above this proof, this is also the number of weights of $V^\lambda$ fixed by the longest Weyl group element.\footnote{There, the base point $x$ is further restricted to be an eigenvector of the involution $\eta$, and it is not obvious whether this works for all $x\in U$. However, the fixed point count in terms of $\D^\lambda$ is independent of that restriction and gives the same result in all fibres over points in $U$.}

    Finally, it is claimed that the fibre is non-empty if and only if $\lambda$ is fixed by a quasi-compact real structure inner to a split real structure. According to Theorem \ref{thm:degeneracy}, the latter condition is equivalent to injectivity of the coinvariant homomorphism $S(\g)^\g_{\sigma^*} \to \Kir^\lambda(\g)_{\sigma^*}$. In turn, this is equivalent to surjectivity of the map $(\Spec \Kir^\lambda(\g))^{\sigma^*}\to (\Spec S(\g)^\g)^{\sigma^*}$ (using Proposition \ref{prop:coinv_ring} and the fact that the map is finite, hence closed). But it is clear that this map is surjective if and only if its fibres over the dense open subset $U$ are nonempty.
\end{proof}

\begin{rmk}
    Instead of the route taken here, one can also observe that the principal grading on $V^\lambda$ corresponds to weight spaces for a suitable element of the adjoint group $\Ad (\g)$, and that this element is conjugate to a representative of $w_0$. This is the original strategy used in \cite{Stembridge1994_minuscule}. The inner automorphism $\eta$, arising in the discussion above, which maps a regular semisimple element to its negative is indeed a special involution in the sense of \cite[p.\,15]{Stembridge1994_minuscule}. However, the approach we have taken here seems more natural in the context of Kirillov algebras.
\end{rmk}

Finally, we present a slight extension of Theorem \ref{thm:fixed_point_count}, which makes some progress towards proving conjectures of Hausel stated in \cite[slide 7]{Hausel2023_ICMAT}. Following Hausel, we introduce the following variant of \eqref{eq:dynkinpoly}, a priori as a rational function:
\begin{equation}
\label{eq:dinkin_even}
   \D^\lambda_{\mathrm{ev}}(q)
    \coloneqq
    \prod_{\substack{
        \alpha \in \Delta^+\\
        \langle \rho +\lambda, \alpha^\vee \rangle \in 2\Z
    }}\!\!\!\!     
        \big(1 - q^{\langle \rho +\lambda, \alpha^\vee \rangle}\big)
    \prod_{\substack{
        \alpha \in \Delta^+\\
        \langle \rho, \alpha^\vee \rangle \in 2\Z
    }}\!\!\!     
        \big(1 - q^{\langle \rho, \alpha^\vee \rangle}\big)\inv.
\end{equation}
Two simple facts about this are as follows:
\begin{prop}
\label{prop:Dynkin_even_basics}
\leavevmode
\begin{itemize}
    \item[(i)] $\D^\lambda_{\mathrm{ev}}$ is a polynomial.
    \item[(ii)] $\D^\lambda_{\mathrm{ev}}(1) = \D^\lambda(-1)$.
\end{itemize}
\end{prop}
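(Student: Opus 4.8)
The plan is to deduce both parts from the cyclotomic factorisation of $\D^\lambda$, the only external input being that $\D^\lambda$ is a genuine polynomial — which \eqref{eq:dynkinpoly} exhibits it to be, as a sum over weights (equivalently, as the Poincaré polynomial of $\Kir^\lambda(\g)$ over $S(\g)^\g$, see Proposition \ref{prop:kirillov_facts}(iii)). Throughout I would abbreviate $m_\alpha \coloneqq \langle \rho+\lambda,\alpha^\vee\rangle$ and $n_\alpha \coloneqq \langle\rho,\alpha^\vee\rangle$ for $\alpha\in\Delta^+$, so that \eqref{eq:dynkinpoly} reads $\D^\lambda(q) = \prod_\alpha(1-q^{m_\alpha})/\prod_\alpha(1-q^{n_\alpha})$ with $m_\alpha \ge n_\alpha \ge 1$.

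For part (i), I would factor $1-q^k = \prod_{e\mid k}\Phi_e(q)$ into cyclotomic polynomials, so that $\D^\lambda = \prod_{e\ge1}\Phi_e^{c_e}$ with $c_e = \#\{\alpha\in\Delta^+ : e\mid m_\alpha\} - \#\{\alpha\in\Delta^+ : e\mid n_\alpha\}$; since $\D^\lambda\in\C[q]$ and the $\Phi_e$ are pairwise coprime irreducibles, each $c_e\ge0$. The elementary observation is that ``$e\mid k$ and $k$ even'' is equivalent to ``$\operatorname{lcm}(e,2)\mid k$''. Applying this to the defining product of $\D^\lambda_{\mathrm{ev}}$ shows that the multiplicity of $\Phi_e$ in $\D^\lambda_{\mathrm{ev}}$ equals $c_{\operatorname{lcm}(e,2)}\ge0$; hence $\D^\lambda_{\mathrm{ev}}$ is a polynomial, and (taking $e=1$) its order of vanishing at $q=1$ equals $c_2$.

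For part (ii), the point is that the product formula for $\D^\lambda$ has zeros in its denominator at $q=-1$, so I would first clear denominators. Starting from the polynomial identity $\D^\lambda(q)\prod_\alpha(1-q^{n_\alpha}) = \prod_\alpha(1-q^{m_\alpha})$, I substitute $q\mapsto -q$ (so $1-(-q)^k$ becomes $1-q^k$ or $1+q^k$ according to the parity of $k$), sort both sides by the parity of the exponents, and cancel the common factor $\prod_{n_\alpha\text{ even}}(1-q^{n_\alpha})$ — legitimate in the integral domain $\C[q]$ once part (i) guarantees $\D^\lambda_{\mathrm{ev}}\in\C[q]$. This should produce the polynomial identity
\[
    \D^\lambda(-q)\!\!\prod_{n_\alpha\text{ odd}}\!\!(1+q^{n_\alpha}) \;=\; \D^\lambda_{\mathrm{ev}}(q)\!\!\prod_{m_\alpha\text{ odd}}\!\!(1+q^{m_\alpha}).
\]
Evaluating at $q=1$ gives $\D^\lambda(-1)\,2^{b} = \D^\lambda_{\mathrm{ev}}(1)\,2^{a}$, where $a$ and $b$ are the numbers of odd $m_\alpha$ resp.\ odd $n_\alpha$; since $a-b = \#\{n_\alpha\text{ even}\}-\#\{m_\alpha\text{ even}\} = -c_2$, this already yields $\D^\lambda(-1) = \D^\lambda_{\mathrm{ev}}(1)$ when $c_2=0$. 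When $c_2>0$, both sides vanish: $\Phi_2\mid\D^\lambda$ forces $\D^\lambda(-1)=0$, and by part (i) $\Phi_1\mid\D^\lambda_{\mathrm{ev}}$ forces $\D^\lambda_{\mathrm{ev}}(1)=0$. I expect the only real obstacle to be the bookkeeping of which cyclotomic factors of numerator and denominator survive in $\D^\lambda_{\mathrm{ev}}$, together with not overlooking this degenerate case $\D^\lambda(-1)=0$; no input beyond polynomiality of $\D^\lambda$ should be required.
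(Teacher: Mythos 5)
Your proof is correct. For part (i) the underlying idea is the paper's — compare cyclotomic content of numerator and denominator and use polynomiality of $\D^\lambda$ — but your bookkeeping is sharper: the paper factors $\D^\lambda$ over multisets of roots of unity and ``removes the odd roots of unity'', whereas you record the multiplicity $c_e$ of each $\Phi_e$ and observe that the multiplicity of $\Phi_e$ in $\D^\lambda_{\mathrm{ev}}$ is exactly $c_{\operatorname{lcm}(e,2)}\ge 0$. This is worth having: deleting the odd-order roots from the factorisation of $\D^\lambda$ does not literally reproduce $\D^\lambda_{\mathrm{ev}}$ (already for $\sl_2$ with its minuscule weight one gets $1+q$ rather than $1-q^2$), so your $\operatorname{lcm}$ computation is the precise version of that step. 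For part (ii) your route genuinely differs in mechanics: the paper notes that $\D^\lambda_{\mathrm{ev}}$ is an even polynomial, hence $\D^\lambda_{\mathrm{ev}}(1)=\D^\lambda_{\mathrm{ev}}(-1)$, and then compares with $\D^\lambda(-1)$ using that $(1-q^{2k+1})/(1-q)$ evaluates to $1$ at $q=-1$; you never use evenness, but instead clear denominators, substitute $q\mapsto -q$, cancel the even-exponent denominator factors, evaluate at $q=+1$, and balance powers of $2$, treating the degenerate case $c_2>0$ (where $\Phi_2\mid\D^\lambda$ and $\Phi_1\mid\D^\lambda_{\mathrm{ev}}$ force both sides to vanish) separately. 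What your version buys is that the possible mismatch between the numbers of odd exponents in numerator and denominator — precisely the case $\D^\lambda(-1)=0$, which does occur — is handled explicitly rather than implicitly; what the paper's version buys is brevity once evenness of $\D^\lambda_{\mathrm{ev}}$ is observed. Both arguments use only the polynomiality of $\D^\lambda$, as you say.
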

\begin{proof}
    Since $\D^\lambda$ is a fraction of products of cyclotomic polynomials by \eqref{eq:dynkinpoly}, it admits a factorisation
    $$
        \D^\lambda(q)
        =
        \frac{\prod_{z\in Z_1} (q-z)}{\prod_{z\in Z_2} (q-z)}
    $$
    where $Z_1,Z_2$ are multisets of roots of unity different from $1$. The multiplicity of each $z$ in $Z_1$ is at least the multiplicity in $Z_2$ because $\D^\lambda$ is a polynomial. By removing all powers of odd primitive roots of unity from both $Z_1$ and $Z_2$, we arrive at a factorisation of $\D^\lambda_{\mathrm{ev}}$, which is thus also a polynomial, proving (i). The definition of $\D^\lambda_{\mathrm{ev}}$ immediately yields that it is an even polynomial, so in particular $\D^\lambda_{\mathrm{ev}}(1) = \D^\lambda_{\mathrm{ev}}(-1)$. Part (ii) then follows from (i) and the fact that
    $$
        (1-q^{2k+1})/(1-q) = 1 + q + \cdots q^{2k}
    $$
    evaluates to $1$ at $q=-1$ for any $k\in \N$.
\end{proof}
\begin{thm}
    Let $\g$ be a complex semisimple Lie algebra and $\lambda$ a minuscule weight of $\g$. Further, let $G$ be the connected simply connected Lie group with Lie algebra $\g$, $G^\vee$ its Langlands dual, and $\sigma$ a split real structure of $G^\vee$. Then the coinvariant homomorphism $S(\g)^\g_{\sigma^*}\to \Kir^\lambda(\g)_{\sigma^*}$ is free if and only if $\D^\lambda_{\mathrm{ev}}(1) \ne 0$. If this is the case, $\D^\lambda_{\mathrm{ev}}$ is the Poincaré polynomial of $\Kir^\lambda(\g)_{\sigma^*}$ over $S(\g)^\g_{\sigma^*}$.
\end{thm}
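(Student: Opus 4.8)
The plan is to read off the freeness statement from results already established and then to compute the Poincaré polynomial using the fact that the coinvariant rings in play are themselves polynomial algebras. For the first part: by Theorem~\ref{thm:degeneracy} the homomorphism \eqref{eq:coinv_alg_hom} is free exactly when $\lambda$ is fixed by a quasi-compact real structure in the inner class of $\sigma$, and by Theorem~\ref{thm:fixed_point_count} this holds precisely when $\D^\lambda(-1)\neq 0$; since $\D^\lambda(-1)=\D^\lambda_{\mathrm{ev}}(1)$ by Proposition~\ref{prop:Dynkin_even_basics}(ii), freeness is equivalent to $\D^\lambda_{\mathrm{ev}}(1)\neq 0$.

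For the second part, I first record that $\Kir^\lambda(\g)\cong S(\h)^{W_\lambda}$ (Remark~\ref{rmk:geom_model_explicit}) is a graded polynomial $\C$-algebra by Chevalley--Shephard--Todd; write $c_1,\dots,c_r$ for the degrees of a homogeneous generating set. Similarly $S(\g)^\g\cong S(\h)^W$ is polynomial on generators of degrees $d_1,\dots,d_r$, and $\Kir^\lambda(\g)$ is free over it (Proposition~\ref{prop:kirillov_facts}) with Poincaré polynomial $\D^\lambda$, so that $\D^\lambda(q)=\prod_i(1-q^{d_i})/\prod_j(1-q^{c_j})$. As $\sigma$ is split, $\sigma^*$ acts on $\Kir^\lambda(\g)$, hence on the subring $S(\g)^\g$, as $(-1)^{\deg}$ (Proposition~\ref{prop:sigma_action_translated}); and in a graded polynomial algebra the ideal generated by all homogeneous elements of odd degree coincides with the ideal generated by those generators of odd degree, so Proposition~\ref{prop:coinv_ring} yields
\[
   \Kir^\lambda(\g)_{\sigma^*}\cong \C[\,q_j : c_j \text{ even}\,],
   \qquad
   S(\g)^\g_{\sigma^*}\cong \C[\,p_i : d_i \text{ even}\,].
\]
In particular, when \eqref{eq:coinv_alg_hom} is free its Poincaré polynomial over the base equals $E(q)\coloneqq \prod_{d_i \text{ even}}(1-q^{d_i})/\prod_{c_j \text{ even}}(1-q^{c_j})$, so the proof reduces to the identity $E=\D^\lambda_{\mathrm{ev}}$.

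To prove that identity, note that both $E$ and $\D^\lambda_{\mathrm{ev}}$ --- the latter by \eqref{eq:dinkin_even} --- are products of factors $(1-q^n)^{\pm 1}$, so it is enough to match, for every $e\geq 1$, the multiplicity $v_e$ of the cyclotomic polynomial $\Phi_e$ (normalised so that $1-q^n=\prod_{e\mid n}\Phi_e$). Comparing the two expressions for the Dynkin polynomial --- formula \eqref{eq:dynkinpoly} and $\D^\lambda(q)=\prod_i(1-q^{d_i})/\prod_j(1-q^{c_j})$ --- one reads off
\[
   v_e(\D^\lambda)=\#\{i : e\mid d_i\}-\#\{j : e\mid c_j\}=\#\{\alpha\in\Delta^+ : e\mid\langle\rho+\lambda,\alpha^\vee\rangle\}-\#\{\alpha\in\Delta^+ : e\mid\langle\rho,\alpha^\vee\rangle\}.
\]
If $e$ is even, divisibility by $e$ already forces the relevant exponents to be even, so the parity constraints in the definitions of $E$ and $\D^\lambda_{\mathrm{ev}}$ are vacuous and $v_e(E)=v_e(\D^\lambda)=v_e(\D^\lambda_{\mathrm{ev}})$; if $e$ is odd, the conditions ``$e\mid n$ with $n$ even'' and ``$2e\mid n$'' agree, so $v_e(E)=v_{2e}(\D^\lambda)=v_e(\D^\lambda_{\mathrm{ev}})$. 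Hence $E$ and $\D^\lambda_{\mathrm{ev}}$ have the same cyclotomic factorisation and are equal, which together with the previous paragraph completes the argument.

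Most of this is bookkeeping; the steps requiring actual care are verifying that $\sigma^*$ acts by $(-1)^{\deg}$ also on the subalgebra $S(\g)^\g$ (so that \emph{both} coinvariant rings are polynomial on their even-degree generators), and translating \eqref{eq:dinkin_even} accurately into cyclotomic multiplicities --- in particular the interplay of divisibility and parity in the odd-$e$ case.
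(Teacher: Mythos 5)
Your proposal is correct and follows essentially the same route as the paper: freeness is read off from Theorems \ref{thm:degeneracy} and \ref{thm:fixed_point_count} together with Proposition \ref{prop:Dynkin_even_basics}(ii), and in the free case the Poincar\'e polynomial is the ratio of $(1-q^d)$ over the \emph{even} degrees of $W$ and $W_\lambda$, matched with $\D^\lambda_{\mathrm{ev}}$ by a parity/cyclotomic-factor comparison against Panyushev's degree formula for $\D^\lambda$. The only cosmetic difference is that you obtain the even-degree polynomial-ring description directly from $\sigma^*=(-1)^{\deg}$ acting on $S(\h)^W\to S(\h)^{W_\lambda}$, whereas the paper reaches the same rings by unravelling the identification in Theorem \ref{thm:main}; this changes nothing of substance.
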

\begin{proof}
    The characterisation of freeness follows from Theorems \ref{thm:degeneracy} and \ref{thm:fixed_point_count} and Proposition \ref{prop:Dynkin_even_basics}. In the free case, we see from Theorems \ref{thm:degeneracy} and \ref{thm:main} that the coinvariant homomorphism is the canonical map
    $$
        H^{2*}_{(G^\vee)^\sigma}
        \to
        H^{2*}_{(G^\vee)^\sigma}((G^\vee)^\sigma/P_\lambda^\sigma)
    $$
    for a quasi-compact real structure inner to $\sigma$. Using the identification $\sigma^* = (-1)^{\deg}$ and unravelling the proof of Theorem \ref{thm:main}, we may identify this map with
    $$
        \C[\g]^\g_{(-1)^{\deg}}
        \to
        \C[\el_\lambda]^{\el_\lambda}_{(-1)^{\deg}}.
    $$
    
    These are polynomial rings with generators graded in the even degrees of the Weyl groups $W$ and $W_\lambda$. Therefore, the Poincaré polynomial is given by
    $$
        P(q)
        \coloneqq
        \prod_{d \in D(W)\cap 2\Z}\!\!\!\!(1-q^d)
        \prod_{d \in D(W_\lambda)\cap 2\Z}\!\!\!\!(1-q^d)\inv
    $$
    where $D(W), D(W_\lambda)$ are the respective multisets of degrees. But the analogous expression
    $$
        \prod_{d \in D(W)}\!\!\!(1-q^d)
        \prod_{d \in D(W_\lambda)}\!\!\!(1-q^d)\inv
    $$
    equals $\D^\lambda$ (see \cite[Prop.\,3.7]{Panyushev2004_endoj}), so a similar argument as in the proof of Proposition \ref{prop:Dynkin_even_basics} shows that $P=\D^\lambda_{\mathrm{ev}}$.
\end{proof}

\section{Outlook}
\label{sec:outlook}
As mentioned in the introduction, we expect our main results to generalise to Hausel's 
\emph{big algebras} \cite{Hausel2024_avatars}. In general, big algebras are maximal commutative subalgebras of Kirillov algebras and need not coincide with their ambient Kirillov algebras. Nevertheless, in the minuscule case they do coincide, and the geometric model used throughout this paper generalises more readily to big algebras (cf.\,Theorem \ref{thm:geom_model}). It therefore seems appropriate in our context to consider big algebras as a generalisation of the minuscule Kirillov algebras studied in this paper. On the geometric side, this replaces equivariant cohomology of partial flag varieties with equivariant intersection cohomology of affine Schubert varieties. 

An added difficulty is that this description does not account for the ring structure of big algebras in general, since intersection cohomology need not have such a structure. Nevertheless, there still is a well-defined action of real structures, and we do expect the coinvariant ring to be modelled by a real affine Schubert variety similarly to Theorem \ref{thm:main}. Such a geometric model would likely also lead to a characterisation of freeness as in Theorem \ref{thm:degeneracy}.
However, the relatively elementary methods used here need to be adapted to treat the general case. Let us also note that another direction suggested by Theorem \ref{thm:geom_model} is to study the involutions on the centres of general big algebras.

In the special case of a split real structure, Theorem \ref{thm:fixed_point_count} of this paper recovers a minuscule $q=-1$ phenomenon due to Stembridge \cite{Stembridge1994_minuscule}. That work was generalised to arbitrary representations, where one counts fixed elements of the canonical basis instead of fixed weights, see \cite{Stembridge1996_canonical}. We expect the action of split real structures on general big algebras to recover this via a similar analysis as in Section \ref{sec:weight_action}. 

Another future direction is to describe the case of non-free coinvariant homomorphisms $S(\g)^\g_{\sigma^*} \to \Kir^\lambda(\g)_{\sigma^*}$ in more detail. It follows from Lemma \ref{lem:good_base_points} and the proof of Theorem \ref{thm:fixed_point_count} that the image of $\Spec \Kir^\lambda(\g)_{\sigma^*}$ in  $S(\g)^\g_{\sigma^*}$ must in this case be contained in the vanishing locus $V(D)$ of the discriminant. One can then ask for a maximal closed subscheme $Z$ of $S(\g)^\g_{\sigma^*}$ (contained in $V(D)$) such that $\Spec \Kir^\lambda(\g)_{\sigma^*}$ is finite-free over $Z$. The analysis in Section \ref{sec:ff} suggests that such a $Z$ can be obtained as the spectrum of $\C[\kay]^K$ for a suitable (compact) subgroup $K$ of $(G^\vee)^{\sigma_0}$ with Lie algebra $\kay$, but it is not yet clear to us how to define $K$ in a uniform way.

Let us also remark that there is a connection to moduli spaces of Higgs bundles. When $\g$ is of type $A$, the spectra of minuscule Kirillov algebras are isomorphic to \emph{very stable type $(1,\ldots, 1)$ upward flows} in the moduli space of Higgs bundles\footnote{of fixed rank and degree, over a smooth projective curve of genus at least two} \cite[\S4]{GH2024_even_very_stable}. Involutions (and more general automorphisms) on the whole moduli space, resembling the automorphisms $\iota_{\eta,\zeta}$ of Proposition \ref{prop:iota_eta_zeta}, have been studied in \cite{GPRa2019_involutions}. The Kirillov algebra involutions studied here should correspond to such global involutions of the moduli space restricted to suitable upward flows. The case of $\iota_{\id, -1}$ is discussed in \cite{GH2024_even_very_stable}. 

Finally, one can ask how natural our focus on real structures is for the material covered in this paper. Indeed, significant parts of our arguments proceed via the (holomorphic) Cartan involutions corresponding to the real structures. However, the geometric model in Theorem \ref{thm:main} does seem most natural when stated through real structures (as opposed to invariant polynomial rings for complex Lie groups). Currently, this provides only heuristic input towards describing the coinvariant homomorphism: namely, it is free precisely when the modelling real partial flag variety is an equal rank homogeneous space. It would be interesting to further incorporate the real geometry into the description.

\appendix

\section{Remarks on uniqueness and tables}
\label{sec:uniqueness_and_tables}
In the proof of Theorem \ref{thm:main} (see \S\ref{sec:main_proof}), three cases were distinguished. In most cases, the real structure guaranteed by the Theorem can be chosen to restrict quasi-compactly to the relevant Levi subgroup; it is then essentially unique (see Lemma \ref{lem:qcrf_extension}). This restriction property can fail for simple factors of type $A_{2n}$ with the inner class of the split real structure. This inner class consists of only one inner-isomorphism class, Thus, we could obtain a unique choice of $\sigma$ up to isomorphism by demanding quasi-compact restriction on all factors where this is possible. 

The resulting $\sigma$ are precisely those used in the proof of Theorem \ref{thm:main}. However, in the way it is stated, Theorem \ref{thm:main} could allow for several essentially different choices of $\sigma$. For the sake of completeness, we remark that this can indeed happen. Namely, consider the case of $G^\vee = Sp_{4n}(\C)$ with $L_\lambda \cong \GL_{2n}(\C)$ and the inner class containing a split real form of $G^\vee$. The real structure provided by Lemma \ref{lem:qcrf_extension} results in the real form $U^*(2n)$ of $\GL_{2n}(\C)$. However, a calculation as in Example \ref{expl:typeAexpl} shows that we could also have chosen the split real structure of $G^\vee$, resulting in the real form $\GL_{2n}(\R)$ of $\GL_{2n}(\C)$.

The real structures used in the proof of Theorem \ref{thm:main} are listed below, in terms of the corresponding real forms, for all minuscule weights of simple complex Lie algebras. This leaves (up to direct sums) the case of $\g^\vee = \g_s\oplus \g_s$ where $\g_s$ is simple and the real structure permutes the summands. However, all such real structures are isomorphic; up to isomorphism the Satake diagram consists of two copies of the Dynkin diagram of $\g_s$ with corresponding nodes connected by arrows.

Except when $\g$ is of type $D_4$, there are then one or two inner classes of real structures for each $\g$, so we can group them by whether they are inner to a split real structure. If we identify isomorphic (though not necessarily inner-isomorphic) real structures, then even for $D_4$ this results in only two cases. Thus, Table \ref{table:split} contains inner classes of split real structures, and Table \ref{table:non_split} the remaining ones. We also list the Levi subalgebras $\el_\lambda$ via their (more conveniently notated) derived subalgebras $\el_\lambda'$ and include Satake diagrams. For the role of Satake diagrams in this context, see Remark \ref{rmk:Satake_check}. 

\begin{table}[ht]
    \centering
    \begin{tabular}{|c|c|c|c|c|}
        \hline
        $\g^\vee$ & $\el_\lambda'$ & $(\g^\vee)^\sigma$ & $(\el_\lambda^\sigma)'$ & Satake diagram 
        \Tstrut\Bstrut
        \\
        \hline
            $\sl_{2n}(\C)$ 
            & \makecell[c]{$\sl_{2k}(\C) \,\oplus$ \Tstrut\\ $\sl_{2n-2k}(\C)$} 
            & \makecell[c]{$\su^*(2n)$\Tstrut\\ (quasi-compact)} 
            & \makecell[c]{$\su^*(2k) \,\oplus$\Tstrut\\ $\su^*(2n-2k)$}
            & \dynkin[rootradius = 0.06cm, labels = {1,2,3,$2k$, , $2n-1$}]A{*o*.x.o*}
            \Tstrut\Bstrut
        \\
        \hline
            $\sl_{2n}(\C)$ 
            & \makecell[c]{$\sl_{2k+1}(\C) \,\oplus$ \Tstrut\\ $\sl_{2n-2k-1}(\C)$} 
            & $\sl_{2n}(\R)$
            & \makecell[c]{$\sl_{2k+1}(\R) \,\oplus$ \Tstrut\\ $\sl_{2n-2k-1}(\R)$}
            & \dynkin[rootradius = 0.06cm, labels = {1,2,3,$2k+1$, , $2n-1$}]A{ooo.x.oo}
            \Tstrut\Bstrut
        \\
        \hline
            $\sl_{2n+1}(\C)$ 
            & \makecell[c]{$\sl_{k}(\C) \,\oplus$\Tstrut\\ $\sl_{2n+1-k}(\C)$}
            & \makecell[c]{$\sl_{2n+1}(\R)$ \Tstrut\\ (quasi-compact)}
            & \makecell[c]{$\sl_{k}(\R) \,\oplus$\Tstrut\\ $\sl_{2n+1-k}(\R)$}
            & \dynkin[rootradius = 0.06cm, labels = {1,2,3,$k$, , $2n$}]A{ooo.x.oo}
            \Tstrut\Bstrut
        \\
        \hline
            $\so_{2n+1}(\C)$ 
            & $\so_{2n-1}(\C)$ 
            & $\so_{1,2n}(\R)$
            & $\so_{2n-1}(\R)$
            & \dynkin[rootradius = 0.06cm, labels = {1,2,3, ,$n$}]B{x**.**}
            \Tstrut\Bstrut
        \\
        \hline
            $\sp_{4n}(\C)$ 
            & $\sl_{2n}(\C)$ 
            & $\sp_{2n, 2n}(\R)$
            & $\su^*(2n)$
            & \dynkin[rootradius = 0.06cm, labels = {1,2,3, , ,$2n$}]C{*o*.o*x}
            \Tstrut\Bstrut
        \\
        \hline
            $\sp_{4n+2}(\C)$ 
            & $\sl_{2n+1}(\C)$ 
            & $\sp_{4n+2}(\R)$
            & $\sl_{2n+1}(\R)$
            & \dynkin[rootradius = 0.06cm, labels = {1,2,3, , ,$2n+1$}]C{ooo.oox}
            \Tstrut\Bstrut
        \\
        \hline
            $\so_{4n}(\C)$ 
            & $\so_{4n-2}(\C)$
            & $\so_{2,4n-2}(\R)$
            & $\so_{1, 4n-3}(\R)$
            & \dynkin[rootradius = 0.06cm, labels = {1,2,3, , ,$2n$}]D{xo*.***}
            \Tstrut\Bstrut
        \\
        \hline
            $\so_{4n}(\C)$ 
            & $\sl_{2n}(\C)$
            & $\so^*(4n)$
            & $\su^*(2n)$
            & \dynkin[rootradius = 0.06cm, labels = {1,2,3, , ,$2n$}]D{*o*.o*x}
            \Tstrut\Bstrut
        \\
        \hline
            $\so_{4n+2}(\C)$ 
            & $\so_{4n-2}(\C)$
            & \makecell[c]{$\so_{1,4n+1}(\R)$ \Tstrut\\ (quasi-compact)}
            & $\so_{4n-2}(\R)$
            & \dynkin[rootradius = 0.06cm, labels = {1,2,3, , ,$2n+1$}]D{x**.***}
            \Tstrut\Bstrut
        \\
        \hline
            $\so_{4n+2}(\C)$ 
            & $\sl_{2n+1}(\C)$
            & $\so_{2n+1, 2n+1}(\R)$
            & $\sl_{2n+1}(\R)$
            & \dynkin[rootradius = 0.06cm, labels = {1,2,3, , ,$2n+1$}]D{ooo.oox}
            \Tstrut\Bstrut
        \\
        \hline
            $\e_6(\C)$ 
            & $\so_{10}(\C)$
            & \makecell[c]{$\e_{6, -26}$\Tstrut \\ (quasi-compact)}
            & $\so_{1,9}(\R)$
            & \dynkin[rootradius = 0.06cm, labels = {1,2,3,4,5,6}]E{o****x}
            \Tstrut\Bstrut
        \\
        \hline
            $\e_7(\C)$ 
            & $\e_6(\C)$
            & $\e_{7, -25}$
            & $\e_{6,-26}$
            & \dynkin[rootradius = 0.06cm, labels = {1,2,3,4,5,6,7}]E{o****ox}
            \Tstrut\Bstrut
        \\
        \hline
    \end{tabular}
    \vspace{5pt}
    \caption{Real forms \emph{inner to a split real form} adapted to minuscule coweights $\lambda$, and their Satake diagrams with the (unpainted) node corresponding to $\lambda$ crossed out. If the real form $(\g^\vee)^\sigma$ is quasi-compact, this is remarked in the third column.}
    \label{table:split}
\end{table}

\begin{table}[ht]
    \centering
    \begin{tabular}{|c|c|c|c|c|}
        \hline
        $\g^\vee$ & $\el_\lambda'$ & $(\g^\vee)^\sigma$ & $(\el_\lambda^\sigma)'$ & Satake diagram 
        \Tstrut\Bstrut
        \\
        \hline
            $\sl_{2n}(\C)$ 
            & $\sl_{n}(\C)\oplus \sl_{n}(\C)$ 
            & $\su_{n,n}$ 
            & $\sl_n(\C)_\R$
            & \dynkin[rootradius = 0.06cm, labels = {1,2,3,$n$,,,$2n-1$}, fold]A{oo.oxo.oo}
            \Tstrut\Bstrut
        \\
        \hline
            $\so_{4n}(\C)$ 
            & $\so_{4n-2}(\C)$
            & \makecell[c]{$\so_{1, 4n-1}(\R)$\Tstrut \\ (quasi-compact)}
            & $\so_{4n-2}(\R)$
            & \dynkin[rootradius = 0.06cm, labels = {1,2,3, , ,$2n$}]D{x**.***}
            \Tstrut\Bstrut
        \\
        \hline
            $\so_{4n+2}(\C)$ 
            & $\so_{4n}(\C)$
            & $\so_{2, 4n}(\R)$
            & $\so_{1, 4n-1}(\R)$
            & \dynkin[rootradius = 0.06cm, labels = {1,2,3, , ,$2n+1$}]D{xo*.***}
            \Tstrut\Bstrut
        \\
        \hline
    \end{tabular}
    \vspace{5pt}
    \caption{Real forms \emph{not inner to a split real form} adapted to minuscule coweights $\lambda$, and their Satake diagrams with the (unpainted) node corresponding to $\lambda$ crossed out. If the real form $(\g^\vee)^\sigma$ is quasi-compact, this is remarked in the third column. A subscript $\R$ denotes that a complex Lie algebra is considered as its underlying real Lie algebra.}
    \label{table:non_split}
\end{table}

\clearpage
\printbibliography

@article{AT_2018_Galois_and_Cartan,
    author = {Adams, J. and Ta{\"i}bi, O.},
    title = {Galois and {C}artan cohomology of real groups},
    journal = {Duke Mathematical Journal},
    year = {2018},
    volume = {167},
    number = {6},
    pages = {1057--1097},
    doi = {10.1215/00127094-2017-0052}
}

@article{AV1992_projective,
    author = {Adams, J. and Vogan, D. A.},
    title = {L-Groups, Projective Representations, and the {L}anglands Classification},
    journal = {American Journal of Mathematics},
    year = {1992},
    volume = {114},
    number = {1},
    pages = {45--138},
    doi = {10.2307/2374739}
}

@article{Araki1962_classification,
    author = {Araki, S.},
    title = {On root systems and an infinitesimal classification of irreducible symmetric spaces},
    journal = {Journal of Mathematics, Osaka City University},
    year = {1962},
    volume = {13},
    number = {1},
    pages = {1--34}
}

@article{Borel1953_esp_fib_et_hom,
    author={Borel, B.},
    title={Sur La Cohomologie des Espaces Fibres Principaux et des Espaces Homogenes de Groupes de {L}ie Compacts},
    journal={Annals of Mathematics},
    year={1953},
    volume={57},
    pages={115--207}
}

@article{Cartan1914,
    author = {Cartan, \'{E}.},
    title = {Les groupes réels simples, finis et continus},
    journal = {Annales scientifiques de l'École Normale Supérieure},
    year = {1914},
    volume = {31},
    pages = {263--355}
}

@article{FFR2010_irregular_singularity,
    author={Feigin, B. and Frenkel, E. and Rybnikov, L. G.},    
    title={Opers with irregular singularity and spectra of the shift of argument subalgebra}, 
    year={2010},
    journal={Duke Mathematical journal},
    volume={155},
    issue = {2},
    doi = {10.1215/00127094-2010-057}
}

@article{GPRa2019_involutions,
    author = {Garc{\'i}a-Prada, O. and Ramanan, S.},
    title = {Involutions and higher order automorphisms of {H}iggs bundle moduli spaces},
    journal = {Proceedings of the London Mathematical Society},
    year = {2019},    
    volume = {119},
    number = {3},
    pages = {681--732},
    doi = {10.1112/plms.12242}
}

@article{GH2024_even_very_stable,
    author = {Gonz\'{a}lez, M. and Hausel, T.},
    title = {Hitchin map on even very stable upward flows},
    journal = {International Journal of Mathematics},
    year = {2024},    
    volume = {35},
    number = {09},
    pages = {2441009},
    doi = {10.1142/S0129167X2441009X}
}

@article{HKRW2020_crystals,
    author = {Halacheva, I. and Kamnitzer, J. and Rybnikov, L. and Weekes, A.},
    title = {Crystals and monodromy of Bethe vectors},
    journal = {Duke Mathematical Journal},
    year = {2020},
    volume = {169},
    issue = {12},
    pages = {2337--2419},
    doi =  {10.1215/00127094-2020-0003}
}

@article{Hausel2024_avatars,
    author = {Hausel, T.},
    title = {Commutative avatars of representations of semisimple {L}ie groups},
    journal = {Proceedings of the National Academy of Sciences of the United States of America},
    year = {2024},    
    volume = {121},
    number = {38},
    pages = {e2319341121},
    doi = {10.1073/pnas.2319341121}
}

@article{HopfSamelson1941,
    author = {Hopf, H. and Samelson, H.},
    title = {Ein {S}atz über die {W}irkungsräume geschlossener {L}iescher {G}ruppen.},
    journal = {Commentarii mathematici Helvetici},
    year = {1941},
    volume = {13},
    pages = {240-251}
}

@article{Kirillov2001_introduction,
    author={Kirillov, A. A.},
    title={Introduction to family algebras},
    journal={Moscow Mathematical Journal},
    year={2001},
    volume={1},
    number = {1},
    pages={49--63},
    doi = {10.17323/1609-4514-2001-1-1-49-63}
}

@article{Kostant1959_TDS,
    author = {Kostant, B.},
    title = {The Principal Three-Dimensional Subgroup and the {B}etti Numbers of a Complex Simple {L}ie Group},
    journal = {American Journal of Mathematics},
    year = {1959},
    volume = {81},
    number = {4},
    pages = {973--1032},
    doi = {10.2307/2372999}
}

@article{Kostant1963_LGRoPR,
	author = {Kostant, B.},
	title = {{L}ie Group Representations on Polynomial Rings},
    journal = {American Journal of Mathematics},
    year = {1963},	
    volume = {85},
    number = {3},
	pages = {327--404},
    doi = {10.2307/2373130}
}

@article{MV,
    author = {Mirković, I. and Vilonen, K.},
    title = {Geometric {L}anglands Duality and Representations of Algebraic Groups over Commutative Rings},
    journal = {Annals of Mathematics},
    year = {2007},
    volume = {166},
    number = {1},
    pages = {95--143},
    doi = {10.4007/annals.2007.166.95}
}

@article{Panyushev2002_covarj,
    author = {Panyushev, D. I.},
    title = {On covariants of reductive algebraic groups},
    journal = {Indagationes Mathematicae}, 
    year = {2002},
    volume = {13},
    number = {1},
    pages = {125--129},
    doi = {10.1016/S0019-3577(02)90010-8}
}

@article{Panyushev2004_endoj,
    author = {Panyushev, D. I.},
    title = {Weight Multiplicity Free Representations, g-Endomorphism Algebras, and {D}ynkin Polynomials},
    journal = {Journal of the London Mathematical Society}, 
    year = {2004},
    volume = {69},
    number = {2},
    pages = {273--290},
    doi = {10.1112/S0024610703004873}
}

@article{Rybnikov2006_argument_shift,
    author = {Rybnikov, L. G.},
    title = {The argument shift method and the {G}audin model},
    journal = {Functional Analysis and Its Applications},
    year = {2006},
    volume = {40},
    issue = {3},
    pages = {188--199},
    doi = {10.1007/s10688-006-0030-3}
}

@article{Stembridge1994_minuscule,
    author = {Stembridge, J. R.},
    title = {On minuscule representations, plane partitions and involutions in complex {L}ie groups},
    journal = {Duke Mathematical Journal},
    year = {1994},
    volume = {73},
    number = {2},
    pages = {469 -- 490},
    doi = {10.1215/S0012-7094-94-07320-1}
}

@article{Stembridge1996_canonical,
    author = {Stembridge, J. R.},
    title = {Canonical bases and self-evacuating tableaux},
    journal = {Duke Mathematical Journal},
    year = {1996},
    volume = {82},
    number = {3},
    pages = {585 -- 606},
    doi = {10.1215/S0012-7094-96-08224-1}
}

@article{Wolf1969_real_action,
    author = {Wolf, J. A.},
    title = {The action of a real semisimple group on a complex flag manifold. I: Orbit structure and holomorphic arc components},
    journal = {Bulletin of the American Mathematical Society},
    year = {1969},
    volume = {75},
    number = {6},
    pages = {1121 -- 1237}
}

@article{Zhu2012_geometric,
    author = {Zhu, X.},
    title = {The Geometric {S}atake Correspondence for Ramified Groups}, 
    journal = {Annales scientifiques de l'École Normale Supérieure},
    year = {2012},
    volume = {48},
    number = {2},
    pages = {409–-51},
    doi = {10.24033/asens.2248}
}

@article{Zveryk2024_dynkin,
    author={Zveryk, V.},
    title={Dynkin automorphism actions on {G}audin algebras}, 
    journal = {Transformation Groups},
    year={2025},
    doi = {10.1007/s00031-025-09916-0}
}

@book{Bourbaki_4to6,
    author =    {Bourbaki, N.},
    title =     {{L}ie Groups and {L}ie Algebras: Chapters 4-6},
    address = {Berlin, Heidelberg},
    publisher = {Springer},
    year =      {2002}
}

@book{Chern_complex_mfds,
    author =    {Chern, S.-s.},
    title =     {Complex Manifolds without Potential Theory},
    subtitle = {(with an appendix on the geometry of characteristic classes)},
    address = {New York, NY},
    publisher = {Springer},
    year =      {1979},
    doi = {10.1007/978-1-4684-9344-3}
}

@book{Humphreys1972_LART,
    author = {Humphreys, J. E.},
    title = {Introduction to {L}ie Algebras and Representation Theory},
    address = {New York, NY},
    publisher = {Springer},
    year = {1972} 
}

@book{KnappLG,
    author = {Knapp, A. W.},
    title = {{L}ie Groups beyond an Introduction},
    address = {Boston, MA},
    publisher = {Birkhäuser},
    year = {2005}
}

@book{Onishchik2004_real,
    author = {Onishchik, A.},
    title = {Lectures on Real Semisimple {L}ie Algebras and Their Representations},
    publisher = {European Mathematical Society},
    address = {Zürich},
    year = {2004},
    doi = {10.4171/002}
}

@book{Steinberg1967_Chevalley_groups,
    author={Steinberg, R.},
    title={Lectures on Chevalley Groups},
    address = {Providence, RI},
    publisher={American Mathematical Society},
    year={2016}
}

@book{Varadarajan_Lie,
    author =    {Varadarajan, V. S.},
    title =     {{L}ie Groups, {L}ie Algebras, and Their Representations},
    address = {New York, NY},
    publisher = {Springer},
    year =      {1984}
}

@InProceedings{Zhu2016_introduction,
    author = {Zhu, X.},
    editor = {Bezrukavnikov, R. and Bravermann, A. and Yun, Z.},
    title = {An introduction to affine Grassmannians and the geometric {S}atake equivalence},
    booktitle = {Geometry of Moduli Spaces and Representation Theory},
    year = {2017},
    address = {Providence, RI},
    publisher = {American Mathematical Society},
    pages = {59--154}
}

@phdthesis{Carlson_thesis,
    type = {Dissertation},
    author = {Carlson, J. D.},
    title = {On the equivariant cohomology of homogeneous spaces},
    school = {Tufts University},
    year = {2015}
}

@misc{Hausel2023_ICMAT,
    author = {Hausel, T.},
    title = {Mirror symmetry and big algebras},
    note = {Slides for a mini course at ICMAT, Madrid, April 2023.},
    url = {https://hausel.ist.ac.at/videos-and-slides-of-minicourse-in-madrid-available/}
}

@misc{Hausel2024_StonyBrook,
    author = {Hausel, T.},
    title = {Mirror symmetry and big algebras},
    note = {Slides for a mini course at the Simons Institute, Stony Brook},
    year = {2024},
    url = {https://hausel.pages.ist.ac.at/749-2/}
}

@misc{stacks-project,
    author       = {The {Stacks project authors}},
    title        = {The Stacks project},
    howpublished = {\url{https://stacks.math.columbia.edu}},
    year         = {2024}
}

\end{document}